\theoremstyle{definition}
\theoremstyle{plain}
\newtheorem{thm}{Theorem}[section]
\newtheorem{lem}[thm]{Lemma}
\newtheorem{cor}[thm]{Corollary}
\newtheorem{thmA}{Theorem}
\newtheorem{corA}[thmA]{Corollary}
\theoremstyle{definition}
\newtheorem{dfn}[thm]{Definition}
\newtheorem{rem}[thm]{Remark}
\theoremstyle{remark}
\newcommand{\Z}{\mathbb{Z}}
\newcommand{\N}{\mathbb{N}}
\newcommand{\C}{\mathbb{C}}
\newcommand{\R}{\mathbb{R}}
\newcommand{\Q}{\mathbb{Q}}
\newcommand{\OO}{\mathcal{O}}
\DeclareMathOperator{\mult}{mult}
\DeclareMathOperator{\Supp}{Supp}
\DeclareMathOperator{\Div}{Div}
\DeclareMathOperator{\NEb}{\overline{\mathrm{NE}}}
\DeclareMathOperator{\Mov}{Mov}
\begin{document}
	\title[On the existence of minimal models]{On the existence of minimal models\\ for log canonical pairs}
	
	%\date{\today}
	\author{Vladimir Lazi\'c}
	\address{Fachrichtung Mathematik, Campus, Geb\"aude E2.4, Universit\"at des Saarlandes, 66123 Saarbr\"ucken, Germany}
	\email{lazic@math.uni-sb.de}
	
	\author{Nikolaos Tsakanikas}
	\address{Fachrichtung Mathematik, Campus, Geb\"aude E2.4, Universit\"at des Saarlandes, 66123 Saarbr\"ucken, Germany}
	\email{tsakanikas@math.uni-sb.de}
	
	\thanks{
		We were supported by the DFG-Emmy-Noether-Nachwuchsgruppe ``Gute Strukturen in der h\"oherdimensionalen birationalen Geometrie". We would like to thank O.\ Fujino for pointing out a gap in a previous proof of Theorem \ref{thm:HH_gMM} and for useful comments, S.\ Filipazzi, Y.\ Gongyo, J.\ Han, W.\ Liu, F.\ Meng and J.\ Moraga for useful discussions related to this work and the referee for valuable comments.
		\newline
		\indent 2010 \emph{Mathematics Subject Classification}: 14E30.\newline
		\indent \emph{Keywords}: minimal models, Minimal Model Program, weak Zariski decompositions, generalised pairs.
	}
	
\begin{abstract}
We show that minimal models of log canonical pairs exist, assuming the existence of minimal models of smooth varieties. 
\end{abstract}
	
	\maketitle
	\setcounter{tocdepth}{1}
	\tableofcontents
	
\section{Introduction}

The goal of this paper is to reduce the problem of the existence of minimal models for log canonical pairs to the problem of the existence of minimal models for smooth varieties. The following is our main result.

\begin{thmA} \label{thm:mainthm}
The existence of minimal models for smooth varieties of dimension $n$ implies the existence of minimal models for log canonical pairs of dimension $n$.
\end{thmA}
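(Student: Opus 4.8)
The plan is to transfer the hypothesis up the chain ``smooth $\Rightarrow$ klt $\Rightarrow$ dlt $\Rightarrow$ log canonical'' in dimension $n$, using dlt blow-ups, the canonical bundle formula (which is why generalised pairs enter), and the reduction of the existence of minimal models to the existence of weak Zariski decompositions. I treat the case where $K_X+\Delta$ is pseudoeffective, the other case being analogous (and terminating with a Mori fibre space). First, I would reduce to $\Q$-factorial dlt pairs: by the (unconditional) existence of dlt blow-ups there is a $\Q$-factorial dlt pair $(Y,\Gamma)$ with a projective birational morphism $Y\to X$ and $K_Y+\Gamma$ equal to the pullback of $K_X+\Delta$, and since minimal models are preserved under crepant birational equivalence, a minimal model of $(Y,\Gamma)$ is a minimal model of $(X,\Delta)$. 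Thus I may assume that $(X,\Delta)$ is $\Q$-factorial dlt with $K_X+\Delta$ pseudoeffective.

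Next I would reduce the existence of a minimal model to the existence of a weak Zariski decomposition. This is the content of the generalised-pair version of the Hashizume--Hu theorem (Theorem \ref{thm:HH_gMM}) that this paper establishes: a $\Q$-factorial NQC generalised log canonical pair with pseudoeffective generalised log canonical class has a minimal model provided that class admits an NQC weak Zariski decomposition. (One runs a suitable MMP with scaling and exploits special termination together with the geometry of the decomposition; the generalised setting is forced on us because the induction, via adjunction, produces generalised pairs.) Hence it suffices to exhibit, on some higher birational model of $X$, a decomposition of the pullback of $K_X+\Delta$ as a nef — indeed NQC — $\R$-divisor plus an effective one.

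The crux is to build this weak Zariski decomposition out of the hypothesis on smooth varieties. The key observation is that a minimal model of a smooth variety $W$ with $K_W$ pseudoeffective immediately gives a weak Zariski decomposition of $K_W$: pulling back to a common resolution of $W$ and its minimal model, $K_W$ becomes the sum of the nef pullback of the canonical class of the minimal model and an effective exceptional divisor. The task is therefore to manufacture, from $(X,\Delta)$ and a log resolution $Z\to X$ with $K_Z+\Gamma_Z=(\text{pullback of }K_X+\Delta)+E$ and $\Gamma_Z,E\geq 0$, an auxiliary smooth $n$-fold whose canonical geometry controls $K_Z+\Gamma_Z$. For the klt part of $\Gamma_Z$ (coefficients $<1$) I would use a covering/birational construction of Kawamata type, after perturbing the coefficients and invoking that NQC weak Zariski decompositions pass to suitable limits; for the reduced part $S=\lfloor\Gamma_Z\rfloor$ I would run the MMP with scaling, use special termination to reach an isomorphism near $S$, and then apply adjunction and the canonical bundle formula to the components of $S$, producing lower-dimensional generalised dlt pairs. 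Their weak Zariski decompositions exist by induction on $n$ — the hypothesis in dimension $n$ yields it in smaller dimensions, e.g.\ by taking products with an elliptic curve and tracking the MMP — and the generalised-pair formalism allows one to glue the resulting nef parts along and away from $S$ into a weak Zariski decomposition of $K_Z+\Gamma_Z$, hence of $K_X+\Delta$. Together with the previous paragraph this finishes the proof.

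I expect the main obstacle to be this last step: faithfully encoding the pseudoeffective class $K_X+\Delta$, which lives on a singular pair with a genuinely nontrivial boundary, into the canonical geometry of an honestly smooth projective $n$-fold, and propagating the NQC property of every nef class that appears through covers, adjunction and the canonical bundle formula. Keeping the generalised-pair bookkeeping exactly right — so that Theorem \ref{thm:HH_gMM} applies verbatim and the induction on dimension and on the number of components of $\lfloor\Delta\rfloor$ genuinely closes, with no hidden appeal to lower-dimensional results beyond what the hypothesis supplies — is the technical heart of the argument, and is presumably the source of the subtlety that (per the acknowledgements) needed correcting in an earlier version.
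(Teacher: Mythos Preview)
Your overall architecture matches the paper: reduce to a $\Q$-factorial dlt pair via a dlt blowup, reduce the existence of a minimal model to the existence of an NQC weak Zariski decomposition (Theorem~\ref{thm:HH_gMM}, which itself needs the lower-dimensional WZD hypothesis for g-pairs), and then manufacture that WZD from the hypothesis on smooth varieties. The gap is entirely in this last step, and your proposed mechanisms do not work.

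The paper does \emph{not} build the WZD via covers or by adjoining to components of $\lfloor\Delta\rfloor$. After passing to a log resolution so that $(X,\Delta)$ is log smooth, it looks at the pseudoeffective threshold
\[
\tau=\inf\{t\geq0 : K_X+t\Delta\text{ is pseudoeffective over }Z\}.
\]
If $\tau=0$, then $K_X$ itself is pseudoeffective and the hypothesis on smooth varieties gives a WZD of $K_X$, hence of $K_X+\Delta$. If $\tau>0$, then for every $\varepsilon>0$ the divisor $K_X+(\tau-\varepsilon)\Delta$ is not pseudoeffective; running its MMP ends in a Mori fibre space, and after further MMPs over the base (Theorem~\ref{thm:maintechnical}) one arranges $K_X+\tau\Delta$ to be \emph{numerically trivial} over a fibration to a lower-dimensional $T$. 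The canonical bundle formula then expresses $K_X+\tau\Delta$ as the pullback of a generalised log canonical class on $T$, and \emph{this} is where g-pairs and the inductive hypothesis in dimension $\leq n-1$ enter. A parallel threshold argument in the nef part (Theorem~\ref{thm:HM}, Corollary~\ref{cor:HM}) upgrades the lower-dimensional hypothesis from ordinary pairs to g-pairs, closing the induction; this is Theorem~\ref{thm:HMref}.

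By contrast, a Kawamata-type cyclic cover kills boundary coefficients, but a WZD on the cover does not descend: the pushforward of a nef divisor need not be nef, let alone NQC. The assertion that ``NQC weak Zariski decompositions pass to suitable limits'' is exactly what is \emph{not} known --- this is why one must work precisely at the threshold $\tau$ rather than approximate it. And adjunction to components of $\lfloor\Delta\rfloor$ yields WZDs on divisors \emph{in} $X$, not on $X$; there is no gluing procedure that assembles these into a WZD of $K_X+\Delta$. The special-termination/adjunction machinery is indeed present, but it lives inside the proof of Theorem~\ref{thm:HanLi} (the step ``WZD $\Rightarrow$ minimal model''), not in the construction of the WZD itself.
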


Note that \emph{minimal models} in the above theorem are \emph{relative} minimal mo\-dels, that is, minimal models of quasi-projective varieties or pairs which are projective and whose canonical class is pseudoeffective over another normal quasi-projective variety. In this paper, minimal models are meant in the usual sense and not in the sense of Birkar-Shokurov; for the differences, see \S\ref{sec:MM}.

In order to state various results of this paper, we need the notions of NQC g-pairs and NQC weak Zariski decompositions; for their definitions, see Section \ref{sec:prelim}. In all these results, the assumption in lower dimensions means the existence of \emph{relative} minimal models as described above. For simplicity and clarity, below we state results mostly for usual pairs. Their counterparts for NQC g-pairs are stated and proved in Section \ref{sec:main1}.

The next result improves both the assumptions in lower dimensions and the conclusions of \cite[Corollary 1.7]{Bir11} and \cite[Theorem 1.5]{Bir12b}, and leads to a similar refinement of \cite[Corollary 1.6]{Bir12b}. 

\begin{thmA}\label{thm:maincorollary}
	Assume the existence of minimal models for smooth varieties of dimension $n-1$. 
	
	Let $ (X/Z,\Delta) $ be a log canonical pair of dimension $ n $. The following are equivalent:
	\begin{enumerate}[label=(\roman*)]
		\item $ (X,\Delta) $ has an NQC weak Zariski decomposition over $Z$,
		\item $ (X,\Delta) $ has a minimal model over $Z$. 
	\end{enumerate} 
\end{thmA}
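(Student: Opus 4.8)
The plan is as follows. The implication (ii) $\Rightarrow$ (i) should require nothing in lower dimensions and is essentially formal: given a minimal model $\phi\colon (X/Z,\Delta)\dashto (X'/Z,\Delta')$, choose a common log resolution $p\colon W\to X$, $q\colon W\to X'$; since $\phi^{-1}$ contracts no divisors and discrepancies do not decrease — strictly increasing along the $\phi$-exceptional divisors — the negativity lemma gives $p^{*}(K_X+\Delta)=q^{*}(K_{X'}+\Delta')+F$ with $F\ge 0$ and $F$ exceptional over $X'$. As $K_{X'}+\Delta'$ is $\Q$-Cartier and nef over $Z$, the divisor $q^{*}(K_{X'}+\Delta')$ is NQC over $Z$, and this is the required NQC weak Zariski decomposition.

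For (i) $\Rightarrow$ (ii) the plan is to produce a minimal model by a terminating MMP with scaling. First I would replace $(X,\Delta)$ by a $\Q$-factorial dlt modification: the NQC weak Zariski decomposition lifts crepantly, and — using the results of the earlier sections, which are designed precisely to bridge the gap between minimal models in the usual sense and in the sense of Birkar--Shokurov — a minimal model of the modification over $Z$ can be converted into a minimal model of $(X,\Delta)$ over $Z$. So I may assume $(X,\Delta)$ is $\Q$-factorial dlt. Next, by the hypothesis together with Theorem~\ref{thm:mainthm} applied in dimension $n-1$, minimal models exist for all log canonical pairs of dimension $\le n-1$; this supplies the termination statements in dimension $n-1$ that are needed to run special termination in dimension $n$.

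Now fix an ample divisor $A$ over $Z$ with $(X,\Delta+A)$ dlt and run a $(K_X+\Delta)$-MMP over $Z$ with scaling of $A$; it exists since flips exist for $\Q$-factorial dlt pairs. Divisorial contractions occur only finitely often, so if this MMP did not terminate we would obtain an infinite sequence of flips $(X_i,\Delta_i)$ with nef thresholds $\lambda_i$, and $\lambda_i\searrow\lambda\ge 0$. If $\lambda>0$, then a tail of the sequence is at the same time a $(K_{X_i}+\Delta_i+\lambda A_i)$-MMP; the birational transform $A_i$ of the ample divisor $A$ stays big, so $\Delta_i+\lambda A_i$ is big, and termination of the MMP with scaling for dlt pairs with big boundary contradicts the infinitude of the sequence. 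Hence $\lambda=0$, which is the essential case. Here I would bring in the NQC weak Zariski decomposition $p^{*}(K_X+\Delta)=P+N$ over $Z$, with $P$ NQC and nef over $Z$ and $N\ge 0$. Special termination forces the flips to avoid $\lfloor\Delta_i\rfloor$ after finitely many steps; the heart of the argument is to show that, since $\lambda_i\to 0$, after finitely many further steps the flips also avoid $\Supp N$. Once this holds, comparing $p_i^{*}(K_{X_i}+\Delta_i)$ with $P+N$ on a common resolution and intersecting with the transform of a flipping curve contradicts the nefness of $P$. Thus the MMP terminates, and its output is a minimal model of $(X,\Delta)$ over $Z$ (it is not a Mori fibre space because $K_X+\Delta$ is pseudoeffective over $Z$).

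I expect the main obstacle to be exactly this last point: controlling the negative part $N$ along an infinite flipping sequence whose scaling thresholds tend to $0$, and proving that the flips eventually avoid $\Supp N$. This is the step in which the weak Zariski decomposition and the lower-dimensional induction genuinely interact — one wants to bound the amount subtracted from $N$ using the negativity lemma and $\lambda_i\to 0$, and to apply special termination along the components of $N$ via adjunction — and it is also the step that makes it essential to work with \emph{NQC} decompositions, so that $N$ behaves well under the relevant adjunctions and the restrictions of $P$ remain NQC in dimension $n-1$.
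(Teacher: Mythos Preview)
Your implication (ii)$\Rightarrow$(i) is fine and matches the paper's Remark~\ref{rem:MMimplWZD}; the only point to add is that the nef $\R$-divisor $K_{X'}+\Delta'$ is indeed NQC, which is not automatic and uses a rationality-of-polytope argument as in \cite[Proposition~3.2(3)]{Bir11} or \cite[Proposition~5.1]{HanLi}.

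For (i)$\Rightarrow$(ii) the paper does \emph{not} attempt a direct termination argument. It first feeds the hypothesis on smooth $(n-1)$-folds through Remark~\ref{rem:MMimplWZD}, Lemma~\ref{lem:downthedimension} and, crucially, Theorem~\ref{thm:HMref} to obtain NQC weak Zariski decompositions for \emph{NQC log canonical g-pairs} in all dimensions $\le n-1$. With this in hand it simply invokes Theorem~\ref{thm:HH_gMM}, which packages Han--Li's Theorem~\ref{thm:HanLi} together with \cite[Theorem~1.7]{HH19}. In other words, the paper isolates the hard termination work in the black boxes \cite{HanLi,HH19}; its own contribution here is Theorem~\ref{thm:HMref}, which upgrades the lower-dimensional input from ``smooth varieties'' to ``g-pairs''.

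Your plan instead tries to reprove the content of Theorem~\ref{thm:HanLi} by hand, and this is where the gap lies. The step you flag --- showing that an infinite sequence of flips with $\lambda_i\to 0$ eventually avoids $\Supp N$ --- is not a loose end but the entire substance of the Birkar/Han--Li argument. The mechanism that makes it work is an induction on the integer $\theta$ counting components of $N$ not in $\lfloor\Delta\rfloor$ (cf.\ \cite[Definition~5.2, Theorem~5.4]{HanLi}): one increases selected coefficients of $\Delta$ to $1$ so that those components enter $\lfloor\Delta\rfloor$, checks the new pair is still log canonical with an NQC weak Zariski decomposition, and then uses special termination. Your sketch contains neither this induction nor the coefficient manipulation that makes ``special termination along components of $N$'' well-posed. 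Moreover, the input you extract in lower dimensions --- minimal models for log canonical \emph{pairs} via Theorem~\ref{thm:mainthm} --- is weaker than what the Han--Li machinery actually consumes, namely NQC weak Zariski decompositions for \emph{g-pairs}; adjunction and the canonical bundle formula in the $\theta$-induction genuinely produce g-pairs on lower-dimensional strata. Bridging that gap is exactly the role of Theorem~\ref{thm:HMref}, which your proposal does not use.
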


Our results are stronger when a general fibre of the structure morphism has a lot of rational curves:

\begin{thmA} \label{thm:uniruled}
	Assume the existence of minimal models for smooth varieties of dimension $n-1$. 
	
	Let $ (X/Z,\Delta) $ be a pseudoeffective log canonical pair of dimension $ n $ such that a general fibre of the morphism $ X\to Z $ is uniruled. Then $ (X,\Delta) $ has a minimal model over $Z$. 
\end{thmA}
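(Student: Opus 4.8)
The plan is to deduce Theorem~\ref{thm:uniruled} from Theorem~\ref{thm:maincorollary}, whose hypothesis is exactly the one assumed here; thus it suffices to produce an NQC weak Zariski decomposition of $K_X+\Delta$ over $Z$. Since the existence of such a decomposition is unchanged under passing to a $\Q$-factorial dlt modification, I may assume that $(X,\Delta)$ is $\Q$-factorial dlt.

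Uniruledness enters only once. A general fibre $F$ of $X\to Z$ being uniruled, $K_F$ is not pseudoeffective, so $K_X$ is not pseudoeffective over $Z$, whereas $K_X+\Delta$ is. Hence, for a general ample divisor $A$ over $Z$ and small enough rational numbers $t\in(0,1)$ and $\delta>0$, the pair $\big(X,(1-t)\Delta+\delta A\big)$ is $\Q$-factorial klt with boundary big over $Z$, while $K_X+(1-t)\Delta+\delta A$ is still not pseudoeffective over $Z$. By \cite{BCHM} a $\big(K_X+(1-t)\Delta+\delta A\big)$-MMP over $Z$ therefore terminates with a Mori fibre space $g\colon V\to T$ over $Z$; writing $\psi\colon X\dashto V$ for the resulting birational contraction and $\Delta_V\defeq\psi_*\Delta$, we get $\dim T<n$, the variety $V$ is of Fano type over $T$, and $K_V+\Delta_V=\psi_*(K_X+\Delta)$ is pseudoeffective over $Z$, hence also over $T$ (the image of a pseudoeffective class under the surjection $N^1(V/Z)\twoheadrightarrow N^1(V/T)$ stays pseudoeffective).

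The remaining work is to descend along $g$. As $V$ is of Fano type over $T$ it is a relative Mori dream space, so $K_V+\Delta_V$ is, over $T$, $\R$-linearly equivalent to $P_V+N_V$ with $P_V$ semiample over $T$ (hence NQC) and $N_V\ge 0$; let $\pi\colon V\to Y$ over $T$ be the contraction induced by $P_V$. Applying the generalised canonical bundle formula to $\pi$ (after a modification making it lc-trivial) produces an NQC generalised pair $(Y,B_Y+M_Y)$ such that $K_V+\Delta_V$ is $\R$-linearly equivalent to the pullback of $K_Y+B_Y+M_Y$ plus an effective divisor, and pseudoeffectivity over $Z$ transfers to it. If $\dim Y<n$, the generalised-pair version of Theorem~\ref{thm:mainthm} in lower dimension (see Section~\ref{sec:main1}) — available because minimal models of smooth varieties in dimension $n-1$ also yield them in all smaller dimensions — provides a minimal model of $(Y,B_Y+M_Y)$ over $Z$, hence an NQC weak Zariski decomposition over $Z$; pulling it back along $\pi$ and adding the effective error gives one for $K_V+\Delta_V$, and then for $K_X+\Delta$, over $Z$. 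The only other possibility, $\dim Y=n$, happens precisely when $K_V+\Delta_V$ is big over $T$, and is dealt with via the existence of minimal models for (generalised) pairs whose log canonical class is big.

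I expect the descent to be the main obstacle, for two reasons. First, the MMP above is run with respect to $K_X+(1-t)\Delta+\delta A$ rather than $K_X+\Delta$, so $(V,\Delta_V)$ need not be log canonical; one must therefore use a form of the canonical bundle formula adapted to the Fano-type fibration $g$, or reduce to a sub-log-canonical situation in a controlled way, in order to even define the generalised pair on $Y$. Secondly, one must verify throughout that the existence of an NQC weak Zariski decomposition over $Z$ transfers along $\psi$, along $\pi$, and along the auxiliary modifications; this rests on the basic properties of NQC weak Zariski decompositions established earlier (Section~\ref{sec:prelim}). Granting these, the argument closes by induction on $n$.
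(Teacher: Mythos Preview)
Your overall strategy --- run an MMP to reach a Mori fibre space, descend via a canonical bundle formula, and invoke lower-dimensional results --- is the right shape, and is in fact what Theorem~\ref{thm:maintechnical} does carefully. But as written there are two genuine gaps.

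First, the claim that $K_X+(1-t)\Delta+\delta A$ is not pseudoeffective over $Z$ for \emph{small} $t>0$ is false in general. You only know that $K_X$ is not pseudoeffective and $K_X+\Delta$ is; so if $\tau\defeq\inf\{t\ge 0\mid K_X+t\Delta\text{ pseff}/Z\}$ satisfies $\tau<1$, then for small $t$ the divisor $K_X+(1-t)\Delta$ is already pseudoeffective and adding $\delta A$ only makes things worse. The paper fixes this by working with the threshold pair $(X,\tau\Delta)$, which is exactly on the boundary of the pseudoeffective cone, and then invoking Theorem~\ref{thm:maintechnical}; the weak Zariski decomposition for $(X,\Delta)$ then follows by adding the effective divisor $(1-\tau)\Delta$.

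Second, even if you reach $V$ and produce an NQC weak Zariski decomposition of $K_V+\Delta_V$ over $Z$, you cannot transfer it back to $K_X+\Delta$: the map $\psi$ is $(K_X+(1-t)\Delta+\delta A)$-negative, not $(K_X+\Delta)$-non-positive, so Lemma~\ref{lem:updownWZD} does not apply. This is not a technicality --- steps of $\psi$ can be $(K_X+\Delta)$-positive. The paper avoids this by running all auxiliary MMPs in a way that is either $(K_X+\Delta)$-trivial (via Lemma~\ref{lem:trivMMP}) or $(K_X+\Delta)$-non-positive; see Steps~1 and~3 of the proof of Theorem~\ref{thm:maintechnical}. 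You also correctly flag that $(V,\Delta_V)$ need not be log canonical and that the ``$\dim Y=n$'' case is unresolved; both are further symptoms of running the MMP with the wrong divisor.

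In short: replace your ad hoc MMP by the threshold argument and appeal to Theorem~\ref{thm:maintechnical}, which packages exactly the descent you are attempting but with the bookkeeping done so that both the canonical bundle formula and the transfer of weak Zariski decompositions go through.
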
 

We immediately derive the following corollary. Part (iii) is new. Part (i) was originally proved in \cite{Sho09} and another proof was given later in \cite{Bir12b}, while part (ii) was first proved in \cite{Bir10}. However, we stress that, unlike the results in these references, we prove here the existence of minimal models in the usual sense.

\begin{corA}\label{cor:dim4+5}
	Let $(X/Z, \Delta)$ be a pseudoeffective log canonical pair.
	\begin{enumerate}
		\item[(i)] If $\dim X=4$, then $(X, \Delta)$ has a minimal model over $Z$. 
		\item[(ii)] If $\dim X=5$, then $(X, \Delta)$ has a minimal model over $Z$ if and only if it admits an NQC weak Zariski decomposition over $Z$. In particular, if $K_X+\Delta$ is effective over $Z$, then it has a minimal model over $Z$. 
		\item[(iii)] If $ \dim X = 5 $ and a general fibre of the morphism $ X\to Z $ is uniruled, then 
		$ (X,\Delta) $ has a minimal model over $ Z $.
	\end{enumerate}
\end{corA}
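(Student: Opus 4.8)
The plan is to deduce all three parts of Corollary \ref{cor:dim4+5} formally from Theorems \ref{thm:mainthm}, \ref{thm:maincorollary} and \ref{thm:uniruled}, the only external ingredient being the classical fact that relative minimal models exist for smooth projective varieties of dimension at most $4$; this is part of the well-developed minimal model theory for terminal varieties in dimension four, namely the existence and termination of terminal flips, see for instance \cite{Sho09,Bir10}. No new input beyond these theorems and this classical fact is required, so the corollary is essentially a matter of keeping track of dimensions.

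First I would dispose of part (i) by applying Theorem \ref{thm:mainthm} with $n=4$: since minimal models exist for smooth $4$-folds, they exist for every log canonical pair of dimension $4$, in particular for the given pseudoeffective pair $(X/Z,\Delta)$. For parts (ii) and (iii) the ambient dimension is $5$, so the hypotheses of Theorems \ref{thm:maincorollary} and \ref{thm:uniruled} ask only for minimal models of smooth varieties of dimension $n-1=4$, which is precisely the classical input recalled above. Part (iii) is then immediate from Theorem \ref{thm:uniruled}, and the equivalence asserted in part (ii) is exactly Theorem \ref{thm:maincorollary} applied in dimension $5$. It remains to establish the last sentence of part (ii): if $K_X+\Delta$ is effective over $Z$, write $K_X+\Delta\sim_{\R,Z}D$ with $D\ge 0$; then, taking the identity morphism on $X$, the relation $K_X+\Delta\equiv_Z 0+D$, with $0$ nef over $Z$ and NQC and $D$ effective, is an NQC weak Zariski decomposition of $(X,\Delta)$ over $Z$, so $(X,\Delta)$ has a minimal model over $Z$ by the equivalence just proved.

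The whole difficulty of these three statements lies inside Theorems \ref{thm:mainthm}, \ref{thm:maincorollary} and \ref{thm:uniruled}, so there is no genuine obstacle in the proof of the corollary itself; the main point to get right beforehand is that the dimension-$4$ input is available in the relative setting over an arbitrary normal quasi-projective base, not just over a point. Two further remarks deserve a line: that the trivial decomposition with vanishing nef part is admissible as an NQC weak Zariski decomposition under the conventions of Section \ref{sec:prelim}, which is immediate; and that the statements stop at dimension $5$ precisely because Theorems \ref{thm:maincorollary} and \ref{thm:uniruled} feed on minimal models for smooth $4$-folds, so their analogues one dimension higher would require minimal models for smooth $5$-folds, which are presently unknown.
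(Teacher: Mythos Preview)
Your proof is correct and follows essentially the same route as the paper: invoke the existence of minimal models for smooth (equivalently, terminal) $4$-folds over $Z$---the paper cites \cite[Theorem 5-1-15]{KMM87} for this---and then apply Theorems \ref{thm:mainthm}, \ref{thm:maincorollary} and \ref{thm:uniruled} with $n=4$ or $n=5$ as appropriate. One tiny wrinkle: in the paper's conventions ``effective over $Z$'' means $K_X+\Delta\equiv_Z D$ with $D\ge 0$ (numerical, not $\R$-linear, equivalence), but your argument for the last sentence of (ii) goes through verbatim with $\equiv_Z$ in place of $\sim_{\R,Z}$.
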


If in the previous result the underlying variety $ X $ is a rationally connected $5$-fold and the pair $(X,\Delta)$ is klt, then the conclusion follows by \cite[Theorem 4.1]{Gon15} and by Corollary \ref{cor:dim4+5}(ii); this was discussed with Y.\ Gongyo.

\medskip

In order to prove Theorem \ref{thm:mainthm}, we use strategies from the very recent papers \cite{HanLi} and \cite{HM18}, building to a large extent on previous works of Birkar. In brief, we may assume that $X$ is smooth and we consider divisors $K_X+(1-\varepsilon)\Delta$ which are pseudoeffective. Then there are two cases. If one can take $\varepsilon=1$, then by assumption one obtains a weak Zariski decomposition of $K_X$, which in turn gives a weak Zariski decomposition of $K_X+\Delta$. Otherwise, we consider the maximal number $\varepsilon<1$ such that $K_X+(1-\varepsilon)\Delta$ is pseudoeffective, we run a suitable MMP in order to show that this divisor has a weak Zariski decomposition by lifting one from a lower-dimensional variety, and thus obtain a weak Zariski decomposition of $K_X+\Delta$. Finally, in both cases we use the main result of \cite{HanLi} together with a significant input from \cite{HH19} to conclude, modulo a suitable statement for generalised pairs in lower dimensions. 

One then has to improve the inductive assumption in lower dimensions in order to be able to work with usual pairs and not with generalised pairs. To a large extent, this was done in \cite{HM18}. Our main technical result, which is proved in Section \ref{sec:main} and improves considerably on \cite[Theorem 2]{HM18}, is:

\begin{thmA} \label{thm:HMref}
	The existence of NQC weak Zariski decompositions for smooth varieties of dimension $n$ implies the existence of NQC weak Zariski decompositions for NQC log canonical g-pairs of dimension $n$.
\end{thmA}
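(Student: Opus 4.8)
The plan is to reduce the problem for an NQC g-pair $(X,\Delta+M)$ of dimension $n$ — where $M$ is the nef part, an NQC divisor — to the problem for a smooth variety of the same dimension, and then invoke the hypothesis. After a log resolution we may assume $X$ is smooth, $\Delta$ has SNC support, and $M=\sum \mu_j M_j$ with the $M_j$ nef Cartier; if $K_X+\Delta+M$ is not pseudoeffective there is nothing to prove, so assume it is. Following the scheme sketched in the introduction, I would consider the quantity
\[
\varepsilon_0 \defeq \sup\,\st{\, t\in[0,1] : K_X+(1-t)\Delta + M \text{ is pseudoeffective over the base}\,},
\]
which is well defined since $t=0$ is allowed, and by closedness of the pseudoeffective cone the divisor $D_0 \defeq K_X+(1-\varepsilon_0)\Delta+M$ is pseudoeffective; moreover $\varepsilon_0\in[0,1]$.

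\emph{Case $\varepsilon_0 = 0$.} Then $K_X+\Delta+M$ itself sits on the boundary, and I would like to conclude by applying the hypothesis to a smooth model together with standard perturbation/limit arguments for NQC weak Zariski decompositions (the key point being that an NQC WZD of $K_X+M$, obtained from the smooth case applied to $K_X$ after absorbing the nef summands $M_j$, propagates to $K_X+\Delta+M$ because adding an effective $\R$-divisor to the negative part of a WZD preserves the defining properties). \emph{Case $\varepsilon_0>0$.} Here $(X,(1-\varepsilon_0)\Delta+M)$ is a log canonical g-pair whose adjoint class is pseudoeffective but lies on the boundary of the pseudoeffective region, so after running a suitable $(K_X+(1-\varepsilon_0)\Delta+M)$-MMP (with scaling, using the MMP for lc g-pairs available in the literature and the length-of-extremal-rays estimates) one expects to reach a Mori fibre space or, more usefully, a fibration on which $D_0$ is trivial along the fibres; then one lifts an NQC weak Zariski decomposition from the base, whose dimension is strictly smaller, invoking the inductive statement of the theorem in that dimension. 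Combining the WZD of $D_0$ thus produced with the effective divisor $\varepsilon_0\Delta$ (again absorbed into the negative part) yields an NQC WZD of $K_X+\Delta+M$.

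The technical heart, and the main obstacle, is the MMP step in the case $\varepsilon_0>0$: one must run a log canonical g-pair MMP, guarantee termination (or at least termination with the desired fibration-type outcome) using only the inductive hypothesis in lower dimensions plus known special termination results, and then carry out the lifting of the weak Zariski decomposition across the fibration — this requires a careful canonical-bundle-formula / adjunction argument for g-pairs so that the lower-dimensional object one feeds into induction is again an NQC lc g-pair, not something worse. The second delicate point is bookkeeping of NQC-ness throughout: each birational modification and each adjunction must preserve the property that the nef part remains a nonnegative combination of nef Cartier (or $\Q$-Cartier) divisors, which is exactly what \cite[Theorem 2]{HM18} handles in a weaker form and what we must sharpen here; I would isolate this as a separate lemma on the stability of NQC structures under the operations used. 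Finally, the passage from the boundary divisor $D_0$ back to $K_X+\Delta+M$ and the two-case dichotomy should be packaged so that the induction on $n$ closes cleanly, with the base case $n=0$ (or $n=1$) being trivial.
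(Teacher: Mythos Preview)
Your case analysis has a genuine gap. In Case $\varepsilon_0 = 0$ you claim that an NQC weak Zariski decomposition of $K_X+M$ is ``obtained from the smooth case applied to $K_X$ after absorbing the nef summands $M_j$'', but the hypothesis of the theorem concerns \emph{smooth varieties}, i.e.\ the divisor $K_X$ alone, and nothing in the condition $\varepsilon_0=0$ forces $K_X$ to be pseudoeffective. (For instance, take $X$ Fano with $K_X={-}H$ and $M=2H$: then $K_X+M=H$ is ample while $K_X$ is anti-ample.) More generally, your single parameter $\varepsilon_0$ only scales $\Delta$ and leaves $M$ untouched, so even in the most favourable situation $\varepsilon_0=1$ you land on the g-pair $(X,0+M)$, which is not covered by the smooth-variety hypothesis. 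The ``standard perturbation/limit'' remark does not bridge this: a pseudoeffective $K_X+M$ with $M$ nef simply does not yield a weak Zariski decomposition of $K_X$.

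The paper fixes this by running a \emph{two}-parameter reduction. First, one scales $M$ via
\[
\mu=\inf\{t\geq 0\mid K_X+\Delta+tM\text{ is pseudoeffective over }Z\};
\]
when $\mu>0$ the MMP/fibration/canonical-bundle-formula argument (your Case $\varepsilon_0>0$, made precise as Theorem~\ref{thm:HM}) produces a weak Zariski decomposition of $K_X+\Delta+\mu M$, and when $\mu=0$ the usual pair $(X,\Delta)$ is pseudoeffective. This reduces g-pairs to ordinary pairs (Corollary~\ref{cor:HM}). Only then does one scale $\Delta$ via
\[
\tau=\inf\{t\geq 0\mid K_X+t\Delta\text{ is pseudoeffective over }Z\},
\]
where now the case $\tau=0$ genuinely says $K_X$ is pseudoeffective and the smooth hypothesis applies, while the case $\tau>0$ is handled by the analogous MMP argument for usual pairs (Theorem~\ref{thm:maintechnical}). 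Your sketch of the fibration step in the ``boundary'' case is roughly in line with what these theorems do, but you need to run it twice---once for $M$ and once for $\Delta$---rather than once for $\Delta$ with $M$ carried along.
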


\medskip

Now, we recall approaches towards the existence of minimal models of log canonical pairs which have led to the proof of the main result of this paper. The paper \cite{Bir10} shows the existence of minimal models for \emph{effective} log canonical pairs, modulo the termination of flips with scaling in lower dimensions. For such a pair $(X,\Delta)$, one uses an effective divisor numerically equivalent to $K_X+\Delta$ to define a quantity which behaves in a controllable way when one tries to construct minimal models. The main issue with this statement is that it assumes a strong conjecture -- the Nonvanishing conjecture -- in order to derive a statement about the existence of minimal models. It is, however, expected that in order to prove the Nonvanishing conjecture, one first has to pass to a minimal model, see \cite{LP18a,LP20b}.

It was realised later in \cite{Bir12b} that the arguments work very well if instead one assumes something much weaker, namely that one has a so-called weak Zariski decomposition. Roughly speaking, instead of working with divisors which are effective, one works with sums of an effective and a nef divisor. This offers enough flexibility in order to run induction better. 

Another issue is the very often overlooked assumption in lower dimensions. For instance, in \cite{Bir12b} one assumes the termination of flips in lower dimensions; in particular, one needs the termination to hold not only for pseudoeffective pairs but also for non-pseudoeffective pairs. The problem is that we currently do not have an inductive statement which works for non-pseudoeffective pairs.

A very satisfying solution, dealing with both of the above issues, was given in \cite{HanLi}, albeit by working in the larger category of generalised pairs; a sketch of a strategy was outlined in \cite[\S6]{BH14}. Generalised pairs were introduced in \cite{BH14,BZ16} and have emerged as key players in different contexts ranging from the BAB Conjecture \cite{Bir16b} and the singular version of Fujita's spectrum conjecture \cite{HanLi17} to problems around the termination of flips \cite{Mor18} and the Generalised Nonvanishing and Abundance conjectures \cite{LP20a,LP20b,HanLiu}. 

\medskip

As a side product, we obtain a few results related to the termination of flips. The first one deals with the termination of flips with scaling of an ample divisor; it complements \cite[Theorem 1.9]{Bir12a} and \cite[Corollary 2.9]{HX13}, and is based on \cite[Theorem 1.7]{HanLi} and \cite[Theorem 1.7]{HH19}.

\begin{thmA}\label{thm:scaling}
	Assume the existence of minimal models for smooth varieties of dimension $n-1$.
	
	Let $ (X/Z,\Delta) $ be a log canonical pair of dimension $ n $. If $ (X,\Delta) $ has an NQC weak Zariski decomposition over $Z$, then there exists a 
	$(K_X + \Delta)$-MMP with scaling of an ample divisor over $Z$ which terminates. In particular, the pair 
	$ (X,\Delta) $ has a minimal model over $ Z $. 
\end{thmA}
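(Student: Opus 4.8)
The plan is to combine the equivalence in Theorem \ref{thm:maincorollary} with the known special termination results in the literature. By Theorem \ref{thm:maincorollary}, our hypothesis that $(X,\Delta)$ admits an NQC weak Zariski decomposition over $Z$ already guarantees that $(X,\Delta)$ has a minimal model over $Z$; the content of the present statement is the \emph{method} by which one reaches such a minimal model, namely via a \emph{terminating} MMP with scaling of an ample divisor. So the real work is to show that some $(K_X+\Delta)$-MMP with scaling of an ample divisor over $Z$ terminates, and then the ``in particular'' is automatic since the output of such an MMP is a minimal model once $K_X+\Delta$ is pseudoeffective over $Z$ (which it is, being the negative part of a weak Zariski decomposition plus a nef part, hence a limit of effective classes).

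First I would set up the MMP: fix an ample $\R$-divisor $A$ on $X$ such that $(X,\Delta+A)$ is log canonical and $K_X+\Delta+A$ is nef over $Z$, and run a $(K_X+\Delta)$-MMP over $Z$ with scaling of $A$. Write $\lambda_i$ for the nef thresholds along this MMP and $\lambda = \lim \lambda_i$. The standard dichotomy is: either the MMP terminates (and we are done), or it is infinite with $\lambda_i \searrow \lambda \ge 0$. The key point is to invoke \cite[Theorem 1.7]{HanLi} together with \cite[Theorem 1.7]{HH19}: the former provides, under the existence of minimal models for smooth varieties in dimension $n-1$ (our standing assumption) and the existence of an NQC weak Zariski decomposition, that $(X,\Delta)$ has a minimal model, and more precisely that a carefully chosen MMP with scaling terminates; the latter supplies the special termination input needed to handle the accumulation of the scaling thresholds, i.e.\ that flips with scaling terminate near the non-klt locus in the relevant inductive setting. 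The case $\lambda > 0$ is handled because then from some point on the MMP is a $(K_X+\Delta+\lambda A)$-MMP where $K_X+\Delta+\lambda A$ is on the boundary of the nef cone, and one reduces to termination in lower dimension via special termination; the case $\lambda = 0$ is exactly where the weak Zariski decomposition hypothesis, fed through \cite[Theorem 1.7]{HanLi}, forces termination.

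Concretely, the cleanest route is: by Theorem \ref{thm:maincorollary} the pair $(X,\Delta)$ has a minimal model over $Z$, hence by \cite[Theorem 1.7]{HanLi} (whose hypotheses — existence of minimal models in dimension $n-1$ and existence of a minimal model for $(X,\Delta)$ — are now met) there exists a $(K_X+\Delta)$-MMP with scaling of an ample divisor over $Z$ that terminates. One then checks that the termination of this particular MMP is what the statement asserts, and that its final model is a minimal model of $(X,\Delta)$ over $Z$, using that $K_X+\Delta$ is pseudoeffective over $Z$ — which follows from the existence of the NQC weak Zariski decomposition — so that no Mori fibre space outcome occurs.

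The main obstacle is the bookkeeping around which MMP terminates and the interface with \cite[Theorem 1.7]{HH19}: one must make sure that the MMP with scaling of an ample divisor furnished abstractly by \cite{HanLi} can be taken to be \emph{any} such MMP, or else weaken the statement to ``there exists'' as written, and one must verify the log canonical (as opposed to dlt or klt) hypotheses line up, since special termination for log canonical pairs requires the inductive existence of minimal models in lower dimension rather than merely termination of flips — this is precisely the improvement over \cite[Theorem 1.9]{Bir12a} and \cite[Corollary 2.9]{HX13} that the theorem advertises. Apart from that, the argument is an assembly of Theorem \ref{thm:maincorollary}, \cite[Theorem 1.7]{HanLi}, and \cite[Theorem 1.7]{HH19}, with no substantial new ideas needed beyond what those results provide.
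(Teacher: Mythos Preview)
Your route is logically sound in outline but both circuitous and imprecise about the cited results, and the paper's argument is considerably shorter.

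The paper's proof is two lines. From the assumption that smooth varieties of dimension $n-1$ have minimal models, one deduces (via Remark~\ref{rem:MMimplWZD}, Lemma~\ref{lem:downthedimension}, and Theorem~\ref{thm:HMref}) that NQC log canonical g-pairs of dimensions at most $n-1$ admit NQC weak Zariski decompositions. This is exactly the standing hypothesis of Theorem~\ref{thm:HH_gMM}, which then yields the terminating MMP with scaling directly. There is no need to pass through Theorem~\ref{thm:maincorollary}; in fact the proof of Theorem~\ref{thm:maincorollary} is word-for-word the same reduction followed by the same appeal to Theorem~\ref{thm:HH_gMM}, so invoking it first and then re-entering the HanLi/HH19 machinery amounts to proving the statement twice.

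More seriously, your description of \cite[Theorem 1.7]{HanLi} is inaccurate. Its hypotheses are \emph{not} ``existence of minimal models in dimension $n-1$ and existence of a minimal model for $(X,\Delta)$''; rather (see the reformulation as Theorem~\ref{thm:HanLi}) it requires NQC weak Zariski decompositions for $\Q$-factorial NQC dlt \emph{g-pairs} in lower dimensions, together with an NQC weak Zariski decomposition for the given pair. You never verify the lower-dimensional g-pair hypothesis --- this is precisely what Theorem~\ref{thm:HMref} supplies and is the nontrivial input you are missing. Moreover, the termination clause of \cite[Theorem 1.7]{HanLi} (Theorem~\ref{thm:HanLi}(ii)) requires $X$ to be $\Q$-factorial with $(X,0)$ klt, which is not assumed for an arbitrary log canonical pair; the passage to that setting is handled by \cite[Theorem 1.7]{HH19}, which is the result you should be citing for the final step (and which, once a minimal model in the sense of Birkar--Shokurov exists, gives a terminating MMP with scaling on its own). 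All of this is packaged in the paper as Theorem~\ref{thm:HH_gMM}, whose direct application avoids the confusion.
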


Finally, combining \cite[Theorem 1]{HM18} with Theorem \ref{thm:mainthm} we obtain: 

\begin{corA} \label{cor:secondary}
	Assume the termination of flips for NQC klt g-pairs of dimensions at most $ n-1 $. Then the existence of minimal models for smooth varieties of dimension $ n $ implies the termination of flips for pseudoeffective NQC log canonical g-pairs of dimension $ n $.
\end{corA}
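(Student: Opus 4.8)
The plan is to factor the desired implication through the existence of minimal models for NQC log canonical g-pairs of dimension $n$, and then to quote \cite[Theorem 1]{HM18}. First I would record the elementary fact that the existence of minimal models for smooth varieties of dimension $n$ implies the existence of NQC weak Zariski decompositions for smooth varieties of dimension $n$: if $X$ is smooth and projective over $Z$ with $K_X$ pseudoeffective over $Z$ and $X \dashto X'$ is a minimal model over $Z$, then on a common resolution $p \colon W \to X$, $q \colon W \to X'$ one has $p^{*}K_X = q^{*}K_{X'} + E$ with $E \geq 0$ and $q^{*}K_{X'}$ nef over $Z$, which is an NQC weak Zariski decomposition of $K_X$ over $Z$.

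Feeding this into Theorem \ref{thm:HMref} gives the existence of NQC weak Zariski decompositions for all NQC log canonical g-pairs of dimension $n$. By the g-pair counterpart of Theorem \ref{thm:maincorollary} (stated and proved in Section \ref{sec:main1}), whose hypothesis in lower dimensions is part of the standing assumption, this upgrades to the statement that every pseudoeffective NQC log canonical g-pair of dimension $n$ admits a minimal model over its base; equivalently, the existence of minimal models for NQC log canonical g-pairs of dimension $n$ holds. (This is exactly the g-pair analogue of Theorem \ref{thm:mainthm}.) It now remains to invoke \cite[Theorem 1]{HM18}: under the assumed termination of flips for NQC klt g-pairs of dimensions at most $n-1$, the existence of minimal models for NQC log canonical g-pairs of dimension $n$ implies the termination of flips for pseudoeffective NQC log canonical g-pairs of dimension $n$. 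Chaining the two implications proves the corollary.

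I expect the only delicate point to be bookkeeping of hypotheses rather than anything substantial: one must check that the notion of minimal model and the precise category of generalised pairs in \cite[Theorem 1]{HM18} agree with those produced in the first two paragraphs, and that ``existence of minimal models for smooth varieties of dimension $n$'' in the statement is understood to include all dimensions at most $n$, so that it genuinely supplies the lower-dimensional input required by the g-pair version of Theorem \ref{thm:maincorollary}. All the mathematical content sits in Theorems \ref{thm:HMref} and \ref{thm:mainthm} and in \cite[Theorem 1]{HM18}.
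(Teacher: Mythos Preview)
Your approach is correct and follows essentially the same route as the paper: reduce to NQC weak Zariski decompositions for NQC log canonical g-pairs of dimension $n$ via Theorem~\ref{thm:HMref}, then invoke \cite[Theorem~1]{HM18}. The paper's proof is slightly more streamlined in that it applies \cite[Theorem~1]{HM18} directly after obtaining weak Zariski decompositions, without your intermediate upgrade to minimal models; it also flags a point you omit, namely that the results of \cite{HM18} are stated for rational divisors and one must note that their proofs go through verbatim for NQC g-pairs.
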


As mentioned above, the main problem with this statement is the assumption in lower dimensions, since we assume the termination of flips even for
non-pseudoeffective pairs. We hope to address this issue in a future work.

\section{Preliminaries}\label{sec:prelim}

Throughout the paper we work over $\C$ and all varieties are normal and quasi-projective. A variety $X$ over another variety $Z$, denoted by $X/Z$, is always assumed to be projective over $Z$. 

Given a variety $ X$ over $ Z $ and two $ \R $-Cartier divisors $ D_1 $ and $ D_2 $ on $ X $, we say that $ D_1 $ and $ D_2 $ are \emph{numerically equivalent over $ Z $}, denoted by $ D_1 \equiv_Z D_2 $, if $ D_1\cdot C = D_2 \cdot C $ for any curve $ C $ contained in a fibre of the morphism $ X \to Z $.

Given a normal projective variety $ X $ and a pseudoeffective $\R$-Cartier $\R$-divisor $ D $ on $X$, we denote by $ \nu(X,D) $ the \emph{numerical dimension} of $ D $, see \cite[Chapter V]{Nak04}, \cite{Kaw85}.

Given a smooth projective variety $ X $ and a pseudoeffective $\R$-divisor $ D $ on $X$, we denote by $P_\sigma(D)$ and $N_\sigma(D)$ the $\R$-divisors forming the \emph{Nakayama-Zariski decomposition} of $D$, see\ \cite[Chapter III]{Nak04}. 

A \emph{fibration} is a projective surjective morphism with connected fibres. A \emph{birational contraction} is a birational map whose inverse does not contract any divisors.

\begin{dfn}
	Let $X$ and $Y$ be normal varieties and let $\varphi \colon X\dashrightarrow Y$ be a birational contraction. Let $D$ be an $\R$-Cartier $\R$-divisor on $X$ and assume that $\varphi_*D$ is $\R$-Cartier. Then $\varphi$ is \emph{$D$-non-positive} (respectively \emph{$D$-negative}) if there exists a smooth resolution of indeterminacies $(p,q)\colon W\to X\times Y$ of $\varphi$ such that 
	$$ p^*D\sim_\R q^* \varphi_*D + E,$$
	where $E$ is an effective $q$-exceptional $\R$-Cartier divisor on $W$ (respectively $E$ is an effective  $q$-exceptional $\R$-Cartier divisor on $W$ whose support contains the strict transform of every $\varphi$-exceptional divisor). 
\end{dfn} 

The following results are well-known.

\begin{lem}\label{lem:numtriv}
	Let $f\colon X\to Y$ be a projective surjective morphism between normal quasi-projective varieties. Then there exists an open subset $U\subseteq Y$ with the following property: if $D$ is an $\R$-Cartier $\R$-divisor on $X$ which is numerically trivial on a fibre of $ f $ over some point of $U$, then $D|_{f^{-1}(U)}\equiv_U0$.
\end{lem}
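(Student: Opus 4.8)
The plan is to establish the statement via Noetherian generic behaviour of intersection numbers, together with a semicontinuity argument for which fibres of $f$ are "numerically faithful". First I would reduce to the case where $Y$ is smooth and $f$ is flat, at least over a dense open subset of $Y$: by generic flatness there is a dense open $V\subseteq Y$ over which $f$ is flat, and shrinking $V$ further we may assume $V$ is smooth and $f^{-1}(V)\to V$ is flat with geometrically integral fibres (using that the generic fibre of $f$ is geometrically integral since $X$ is a variety over $\C$, or passing to the Stein factorisation if one only wants connected fibres). After this reduction, the relative Néron–Severi space $N^1(f^{-1}(V)/V)$ is finite-dimensional and its formation is compatible with restriction to fibres: choose finitely many curves $C_1,\dots,C_r$ in fibres of $f^{-1}(V)\to V$ whose classes span $N_1(f^{-1}(V)/V)$, which is possible after shrinking $V$ so that all the $C_i$ can be taken inside a single fibre (spreading out a basis of the generic fibre's relative $1$-cycles).

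Next, the key point is that for an $\R$-Cartier divisor $D$ on $f^{-1}(V)$, the numerical class $[D]\in N^1(f^{-1}(V)/V)$ is zero if and only if $D\cdot C_i=0$ for all $i$, and — crucially — the intersection numbers $D\cdot C_i$ are constant along the family, so that if $D$ restricts to $0$ numerically on \emph{one} fibre $X_y$ with $y\in V$, then $D\cdot C_i = D|_{X_y}\cdot C_{i,y}=0$ for every $i$ (where $C_{i,y}$ denotes the specialisation of $C_i$ to the fibre over $y$). Here I use the deformation invariance of intersection numbers in a flat family: $D\cdot C_i$ computed on the total space equals $D|_{X_t}\cdot C_{i,t}$ for every $t\in V$. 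Hence $[D]=0$ in $N^1(f^{-1}(V)/V)$, which is exactly the assertion $D|_{f^{-1}(V)}\equiv_V 0$. One then sets $U=V$.

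The subtlety I would be most careful about is the interaction between the two ways a divisor can be "numerically trivial on a fibre": the hypothesis only says $D$ is numerically trivial on \emph{some} fibre $X_y$, $y\in U$, i.e.\ $D|_{X_y}\equiv 0$ as a divisor on the variety $X_y$, and one must deduce relative numerical triviality over all of $U$. This works precisely because, after shrinking, every curve in a fibre is numerically (over $V$) a combination of the fixed curves $C_1,\dots,C_r$, whose specialisations $C_{i,y}$ to the chosen fibre $X_y$ remain effective $1$-cycles there; thus $D|_{X_y}\equiv 0$ forces all the numbers $D\cdot C_i$ to vanish. The main obstacle is the bookkeeping needed to spread out a relative basis of $1$-cycles and to guarantee that their specialisations to an arbitrary fibre over $U$ still generate $N_1$ of that fibre modulo relative numerical equivalence — this is standard (it is the construction of the relative Néron–Severi sheaf and its constructibility), but it is the place where one genuinely needs $U$ to be a proper open subset rather than all of $Y$. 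An alternative, essentially equivalent route is to invoke the existence of a relative Hilbert scheme / Chow variety component dominating $Y$ whose generic point gives a curve through the generic fibre, and argue by constructibility of the locus where a given $D$ has prescribed intersection with the universal curve; I would present whichever is shortest, likely the Néron–Severi version.
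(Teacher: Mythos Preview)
Your approach via spreading out a basis of relative $1$-cycles and invoking deformation-invariance of intersection numbers is sound in outline, but it is a genuinely different route from the paper's, and considerably longer. The paper sidesteps entirely the step you yourself flag as the ``main obstacle'': rather than working with a merely flat morphism and tracking constructibility of the relative N\'eron--Severi data, it takes a desingularisation $\pi\colon\widehat X\to X$, chooses $U\subseteq Y$ so that the composite $f\circ\pi$ is \emph{smooth} over $U$ (generic smoothness in characteristic zero), and then invokes \cite[Lemma~II.5.15(3)]{Nak04}, which is precisely the statement that for a smooth projective morphism numerical triviality on one fibre implies relative numerical triviality. One then descends from $\widehat X$ to $X$ by the projection formula. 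The whole argument is four lines.

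The comparison is instructive at exactly the point you identify as delicate. With only flatness, showing that the specialisations $C_{i,y}$ of a spanning set of curves from the generic fibre still span $N_1(f^{-1}(V)/V)$ is not automatic: special fibres can acquire extra curve classes, there are infinitely many components of the relative Hilbert scheme, and the Noetherian shrinking you allude to needs a genuine argument to terminate. For a \emph{smooth} morphism all of this evaporates --- Ehresmann's theorem makes the fibres diffeomorphic, so the local system $R^2(f\circ\pi)_*\Z$ is locally constant and a divisor class vanishing in one fibre vanishes in all; this is what Nakayama's lemma packages. Your argument can be completed, but the shortest way to close the gap you acknowledge is to pass to a resolution first, at which point you have essentially reproduced the paper's proof.
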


\begin{proof}
	Let $\pi\colon\widehat X\to X$ be a desingularisation of $ X $ and let $U$ be an open subset of $Y$ such that the morphism $f\circ \pi$ is smooth over $U$. Let $D$ be an $\R$-Cartier $\R$-divisor on $X$ such that $D|_{f^{-1}(s)}\equiv0$ for some $s\in U$. Then $(\pi^*D)|_{(f \circ \pi)^{-1}(s)}\equiv0$. It follows by \cite[Lemma II.5.15(3)]{Nak04} that $(\pi^*D)|_{(f\circ\pi)^{-1}(U)}\equiv_U 0$. Consequently $D|_{f^{-1}(U)}\equiv_U0$. 
\end{proof}

\begin{lem}\label{lem:Grauert}
Let $f\colon X\to Y$ be a fibration between normal quasi-projective varieties. Let $D$ be a $\Q$-Cartier $\Q$-divisor on $X$.
\begin{enumerate}
\item[(i)] If $\kappa(F,D|_F)\geq0$ for a very general fibre $F$ of $f$, then there exists an effective $\Q$-divisor $E$ on $X$ such that $D\sim_{\Q,Y}E$.
\item[(ii)] If $D|_F\sim_\Q0$ for a very general fibre $F$ of $f$, then there exists a non-empty open subset $U\subseteq Y$ such that $D|_{f^{-1}(U)}\sim_{\Q,U}0$. 
\end{enumerate}
\end{lem}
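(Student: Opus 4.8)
The plan is to prove (i) by a direct-image argument and then to deduce (ii) by applying (i) to both $D$ and $-D$.

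For (i), it suffices to find a positive integer $m_0$ such that $m_0 D$ is Cartier together with an effective integral divisor $E_0$ with $D \sim_{\Q,Y} \frac{1}{m_0} E_0$; the divisor $E := \frac{1}{m_0} E_0$ will then work. The first — and only genuinely delicate — step is to produce such an $m_0$ for which, in addition, $H^0(X_y, m_0 D|_{X_y}) \neq 0$ on a dense open subset of $Y$. To this end, for each positive $m$ with $mD$ Cartier set $S_m := \{\, y \in Y : H^0(X_y, mD|_{X_y}) \neq 0 \,\}$, which is a constructible subset of $Y$ by generic flatness and upper semicontinuity of cohomology. The hypothesis that $\kappa(F, D|_F) \geq 0$ for very general $F$ says precisely that $Y \setminus \bigcup_m S_m$ is contained in a countable union of proper closed subsets of $Y$; since $Y$ is a variety over the uncountable field $\C$, it is not itself a countable union of proper closed subsets, so $\overline{S_{m_0}} = Y$ for some $m_0$. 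Consequently the coherent sheaf $\mathcal F := f_* \OO_X(m_0 D)$ has positive rank at the generic point of $Y$ (as one sees, via cohomology and base change, over a dense open subset of $Y$ contained in $S_{m_0}$), and in particular $\mathcal F \neq 0$. Now fix an ample Cartier divisor $A$ on the quasi-projective variety $Y$; for $n \gg 0$ the sheaf $\mathcal F \otimes \OO_Y(nA)$ is globally generated, hence has a nonzero global section, which by the projection formula is a nonzero section of $\OO_X(m_0 D + n f^* A)$. The associated effective divisor $E_0$ then satisfies $m_0 D - E_0 \sim f^*(-nA)$, so $D \sim_{\Q,Y} \frac{1}{m_0} E_0$, as required.

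For (ii), note that $(-D)|_F \sim_\Q 0$ for very general $F$ as well, so $\kappa(F,(-D)|_F)=0\geq 0$; applying (i) to $D$ and to $-D$ yields effective $\Q$-divisors $E^+$ and $E^-$ with $D \sim_{\Q,Y} E^+$ and $-D \sim_{\Q,Y} E^-$, whence $E^+ + E^- \sim_{\Q,Y} 0$. Restricting to a general fibre $F$ gives $E^+|_F + E^-|_F \sim_\Q 0$; an effective $\Q$-divisor that is $\Q$-linearly trivial on a projective variety must vanish, so $E^+|_F = E^-|_F = 0$ for general $F$. Hence no component of $E^+$ or of $E^-$ dominates $Y$, i.e. both divisors are vertical over $Y$, and over the nonempty open set $U := Y \setminus \big( \overline{f(\Supp E^+)} \cup \overline{f(\Supp E^-)} \big)$ we obtain $E^{\pm}|_{f^{-1}(U)} = 0$, so that $D|_{f^{-1}(U)} \sim_{\Q,U} E^+|_{f^{-1}(U)} = 0$.

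The main obstacle is the uniformisation step in (i): passing from a choice of $m$ that may depend on the very general fibre to a single $m_0$ valid on a dense open subset of $Y$. This is exactly where the hypothesis "very general" (equivalently, the uncountability of $\C$) is genuinely used; everything else is a routine combination of generic flatness, cohomology and base change, the projection formula, and Serre's theorem on global generation. Alternatively, one could phrase the uniformisation through the geometric generic fibre $X_{\bar\eta}$ together with flat base change, but the counting argument above seems the most direct.
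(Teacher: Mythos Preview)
Your proof is correct and follows essentially the same strategy as the paper's, with two minor differences worth noting. For (i), both arguments hinge on the uncountability of $\C$ to produce a single $m_0$ with $H^0$ nonzero generically; the paper argues contrapositively on the generic fibre via semicontinuity and then invokes \cite[Lemma 3.2.1]{BCHM} to spread out, whereas you spread out by hand using the pushforward $f_*\OO_X(m_0 D)$ and an ample twist on $Y$ --- these are equivalent and both standard. For (ii), the paper is more economical: once you have an effective $E$ with $D \sim_{\Q,Y} E$, the hypothesis $D|_F \sim_\Q 0$ already gives $E|_F \sim_\Q 0$, hence $E|_F = 0$ on the projective fibre $F$, so $E$ is vertical and one may take $U := Y \setminus f(\Supp E)$; your second application of (i) to $-D$ is correct but superfluous.
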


\begin{proof}
We first show (i). We may assume that $D$ is Cartier. By assumption there exists a subset $V\subseteq Y$, which is the intersection of countably many dense open subsets of $Y$, such that $\kappa(X_y,D|_{X_y})\geq0$ for the fibre $X_y$ of $f$ over every closed point $y\in V$.

Let $X_\eta$ be the generic fibre of $f$, and assume that $h^0\big(X_\eta,\OO_{X_\eta}(mD)\big)=0$ for all positive integers $m$. By \cite[Theorem III.12.8]{Har77}, for each positive integer $m$ there exists a non-empty open subset $U_m\subseteq Y$ such that $h^0\big(X_y,\OO_{X_y}(mD)\big)=0$ for every point $y\in U_m$. But then for all $m$ and all $y\in V\cap \bigcap\limits_{m\in\N_{>0}}U_m $ we have $h^0\big(X_y,\OO_{X_y}(mD)\big)=0$, a contradiction. 

Therefore, there exists an effective divisor $G$ on $X_\eta$ such that $D|_{X_\eta}\sim_\Q G$, and (i) follows from \cite[Lemma 3.2.1]{BCHM}.

We now show (ii). By (i) there exists an effective 
$ \Q $-divisor $E$ on $X$ such that $D\sim_{\Q,Y}E$. Then $E|_F\sim_\Q 0$ for a very general fibre $F$ of $f$, hence $E|_F=0$ since $F$ is projective. Therefore, $E$ cannot be dominant over $Y$, and we set $U:=Y\setminus f(E)$.
\end{proof}

\subsection{G-pairs}

For the definitions and basic results on the singularities of pairs and the Minimal Model Program (MMP) we refer to \cite{KM98}. We explain below in more detail generalised pairs, abbreviated as g-pairs; for futher information we refer to \cite[Section 4]{BZ16}, and in particular to \cite[\S 2.1 and \S 3.1]{HanLi} for properties of dlt g-pairs. 

In this paper we concentrate on \emph{NQC g-pairs}, as they behave better in proofs; this was first realised in \cite{BZ16,HanLi}. The acronym NQC stands for \emph{nef $\Q$-Cartier combinations}. More precisely, an $\R$-divisor $D$ on a variety $X$ over $Z$ is an \emph{NQC divisor} if it is a non-negative linear combination of $\Q$-Cartier divisors on $X$ which are nef over $ Z $.

\begin{dfn}
	A \emph{generalised pair} or \emph{g-pair} $(X/Z,\Delta+M)$ consists of a normal variety $ X $ equipped with  projective morphisms 
		$$  X' \overset{f}{\longrightarrow} X \longrightarrow Z , $$ 
		where $ f $ is birational and $ X' $ is normal, $\Delta$ is an effective $ \R $-divisor on $X$, and $M'$ is an $\R$-Cartier divisor on $X'$ which is nef over $Z $ such that $ f_* M' = M $ and $ K_X + \Delta + M $ is $ \R $-Cartier. 
				
		Moreover, if $M'$ is an NQC divisor on $ X' $, then the g-pair $(X/Z,\Delta+M)$ is an \emph{NQC g-pair}.
\end{dfn}

For simplicity, we denote such a g-pair only by 
$ (X/Z,\Delta+M) $, but we implicitly remember the whole g-pair structure. Additionally, the definition is flexible with respect to $X'$ and $M'$: if $ g \colon Y \to X' $ is a projective birational morphism from a normal variety $ Y $, then we may replace $X'$ with $Y$ and $M'$ with $ g^*M'$. Hence we may always assume that $f \colon X'\to X$ in the above definition is a sufficiently high birational model of $ X $. 

A g-pair $ (X/Z,\Delta+M) $ is \emph{effective over $Z$} if there exists an effective divisor $G$ on $X$ such that $ K_X + \Delta + M \equiv_Z G$; it is \emph{pseudoeffective over $Z$} if the divisor $ K_X + \Delta + M $ is pseudoeffective over $Z $, that is, it is pseudoeffective on a very general fibre of the morphism $X\to Z$; it is \emph{log smooth} if $ X $ is smooth, with data $ X \overset{\operatorname{id}_X}{\longrightarrow} X \to Z $ and $ M' = M $, and if $ \Delta + M $ has SNC support.
 
\begin{dfn}
	Let $ (X,\Delta+M) $ be a g-pair which comes with data $ X' \overset{f}{\to} X \to Z $ and $ M' $. We can then write 
	$$ K_{X'} + \Delta' + M' \sim_\R f^* ( K_X + \Delta + M ) $$
	for some $ \R $-divisor $ \Delta' $ on $ X' $. Let $ E $ be a divisorial valuation over $ X $ which is a prime divisor on $ X' $, whose centre on $X$ is denoted by $c_X(E)$. The \emph{discrepancy of $ E $} with respect to $ (X,\Delta+M) $ is $ a (E, X, \Delta+M) := {-} \mult_E \Delta' $.
	The g-pair $ (X,\Delta+M) $ is: 
	\begin{enumerate}[label=(\alph*)]
		\item \emph{klt} if $a (E, X, \Delta+M) > -1 $ for all divisorial valuations $E$ over $X$,
		\item \emph{log canonical} if $a (E, X, \Delta+M) \geq -1 $ for all divisorial valuations $E$ over $X$,
		\item \emph{dlt} if it is log canonical, if there exists an open subset $U\subseteq X$ such that the pair $(U,\Delta|_U)$ is log smooth, and if $a(E,X,\Delta+M) = {-}1$ for some divisorial valuation $E$ over $X$, then the set $c_X(E)\cap U$ is non-empty and it is a log canonical center of $(U,\Delta|_U)$.
	\end{enumerate}
\end{dfn}

Note that the notions of klt and log canonical singularities for g-pairs are straightforward generalisations of the corresponding notions for usual pairs, whereas those of dlt singularities for g-pairs are very subtle. We adopted the definition from \cite{HanLi}, which behaves well under restrictions to log canonical centres and under operations of an MMP; such operations are analogous to those in the standard setting, see \cite[Section 4]{BZ16} or \cite[\S3.1]{HanLi} for the details. 

By \cite[Remark 2.3 and Lemma 3.5]{HanLi} one can run an MMP with scaling of an ample divisor over $Z$ for any $ \Q $-factorial log canonical g-pair $(X/Z,\Delta+M)$ such that $ (X,0) $ is klt, and in particular for any $\Q$-factorial dlt g-pair $(X/Z,\Delta+M)$; this fact is often used in this paper. 

\begin{rem}\label{rem:q-cartier}
Given a g-pair $ (X,\Delta+M) $, the divisor $ K_X + \Delta $ is $ \R $-Cartier if and only if the divisor $ M $ is 
$ \R $-Cartier. In this case, if the g-pair $ (X,\Delta+M) $ is klt (respectively log canonical), then the pair $ (X,\Delta) $ is klt (respectively log canonical), see \cite[Remark 4.2.3]{BZ16}. 
\end{rem}

\begin{lem}\label{lem:dltblowup}
Let $(X,\Delta +M)$ be a log canonical g-pair with data $ X' \overset{f}{\to} X \to Z $ and $ M' $. Then, after possibly replacing $f$ with a higher model, there exist a $\Q$-factorial dlt g-pair $\big(\widehat X,\widehat \Delta + \widehat M\,\big)$ with data $ X' \overset{g}{\to} \widehat X \to Z $ and $ M' $, and a projective birational morphism $\pi\colon \widehat X\to X$ such that 
$$K_{\widehat X} + \widehat\Delta +\widehat M \sim_\R \pi^*(K_X + \Delta +M).$$
The g-pair $\big(\widehat X,\widehat \Delta + \widehat M\,\big)$ is a \emph{dlt blowup} of $(X,\Delta +M)$.
\end{lem}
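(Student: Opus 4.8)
The plan is to carry out the standard construction of dlt modifications on a log resolution, now in the category of g-pairs; this is the g-pair analogue of the argument behind dlt modifications in \cite[Section 4]{BZ16}, which one could also simply invoke. First I would replace $f\colon X'\to X$ by a higher birational model which is in addition a log resolution of $(X,\Supp\Delta)$; pulling back $M'$ keeps it nef over $Z$ and preserves the g-pair structure, so this is harmless by the flexibility of the data discussed above. Writing $K_{X'}+\Delta'+M'\sim_\R f^*(K_X+\Delta+M)$, the log canonicity of $(X,\Delta+M)$ gives $\mult_E\Delta'\le 1$ for every $f$-exceptional prime divisor $E$, hence $\Delta'\le\Theta$, where $\Theta\defeq f_*^{-1}\Delta+\sum_{E\ f\text{-exc}}E$. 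Then $(X',\Theta+M')$ is a $\Q$-factorial dlt g-pair, since $X'$ is smooth and $\Theta$ has simple normal crossings support with coefficients in $[0,1]$; moreover $F\defeq\Theta-\Delta'\ge 0$ is $f$-exceptional with $\Supp F\subseteq\lfloor\Theta\rfloor$, and $K_{X'}+\Theta+M'\sim_\R f^*(K_X+\Delta+M)+F$.

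Next, since $(X',\Theta+M')$ is $\Q$-factorial dlt, by \cite[Remark 2.3 and Lemma 3.5]{HanLi} one can run a $(K_{X'}+\Theta+M')$-MMP over $X$ with scaling of an ample divisor. As $K_{X'}+\Theta+M'\equiv_X F$, every step of this MMP is $F$-negative, hence contracts only curves contained in $\Supp F\subseteq\lfloor\Theta\rfloor$; in particular every divisorial contraction contracts the strict transform of one of the finitely many $f$-exceptional prime divisors, so there are only finitely many of them, and all the flips are concentrated along the round-down boundary. Hence the MMP terminates. Let $\pi\colon\widehat X\to X$ be the resulting model and let $\widehat\Delta$, $\widehat M$ be the strict transforms of $\Theta$, $M$; then $(\widehat X,\widehat\Delta+\widehat M)$ is a $\Q$-factorial dlt g-pair and $K_{\widehat X}+\widehat\Delta+\widehat M$ is nef over $X$.

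Finally, write $K_{\widehat X}+\widehat\Delta+\widehat M\sim_\R\pi^*(K_X+\Delta+M)+\widehat F$, where $\widehat F\ge 0$ is the strict transform of $F$. Then $\widehat F$ is $\pi$-exceptional and $\pi$-nef, so $\widehat F=0$ by the negativity lemma \cite[Lemma 3.39]{KM98}, which yields the required $\R$-linear equivalence. The only prime components of $\widehat\Delta$ are the strict transform of $\Delta$ and the $\pi$-exceptional prime divisors, each appearing with coefficient $1$, so $\widehat\Delta=\pi_*^{-1}\Delta+\sum_{E\ \pi\text{-exc}}E$ as expected of a dlt blowup. The birational map $X'\dashrightarrow\widehat X$ is not a morphism, but after replacing $X'$ by a common resolution of $X'$ and $\widehat X$ and $M'$ by its pullback we obtain the data $X'\overset{g}{\to}\widehat X\to Z$, as required. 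The main obstacle is the termination of the MMP: once that is granted, the rest is the negativity lemma together with routine bookkeeping on coefficients and exceptional divisors. Termination is the one genuinely non-formal point, and I would deduce it from special termination for dlt g-pairs, i.e.\ from the termination machinery developed in \cite[Section 4]{BZ16}.
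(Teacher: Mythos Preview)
The paper does not actually prove this lemma: it simply cites \cite[Proposition 3.9]{HanLi} (and \cite[Corollary 1.36]{Kol13} for usual pairs). Your sketch is precisely the construction that those references carry out, so the overall shape of your argument is correct and matches what is behind the cited results.

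There is, however, a genuine gap in your termination step. You argue that all flipping loci lie in $\lfloor\Theta\rfloor$ and then write ``Hence the MMP terminates'', invoking ``special termination for dlt g-pairs'' from \cite[Section 4]{BZ16}. Special termination is not an unconditional theorem: its proof requires, by adjunction, the termination of flips (or at least the existence of minimal models) in lower dimensions, and \cite{BZ16} does not supply such a statement for g-pairs. The actual argument in \cite{BZ16} and \cite{HanLi} is different and avoids special termination entirely. Since $\Supp F\subseteq\lfloor\Theta\rfloor$, the g-pair $\big(X',(\Theta-\epsilon F)+M'\big)$ is \emph{klt} for $0<\epsilon\ll1$; moreover $K_{X'}+(\Theta-\epsilon F)+M'\equiv_X(1-\epsilon)F$, so this MMP over $X$ coincides step by step with the one you are running. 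One then reduces the klt g-pair MMP to an ordinary klt MMP over $X$ with big boundary (exactly as in \cite[Lemma 3.5]{HanLi}, using that $f$ is birational), and this terminates by \cite[Corollary 1.4.2]{BCHM}. Once you replace the appeal to special termination by this klt--perturbation trick, your proof is complete and is exactly what the paper is quoting.
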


\begin{proof}
In the context of usual pairs this is \cite[Corollary 1.36]{Kol13}; in the formulation above this is \cite[Proposition 3.9]{HanLi}.
\end{proof}

\subsection{Minimal models}
\label{sec:MM} 
We distinguish between two types of minimal mo\-dels: minimal models in the usual sense and minimal models in the sense of Birkar-Shokurov. The former (often called \emph{log terminal models} in the lite\-rature) are those for which the map to the model is a birational contraction; the latter (often called \emph{log minimal models} in the literature) allow the map to the model to possibly extract divisors. Minimal models in the sense of Birkar-Shokurov are convenient for inductive purposes; however, our goal is always the existence of minimal models in the usual sense. In this paper, the phrase \emph{minimal model} always means a minimal model in the usual sense. 

\begin{dfn} \label{dfn:minmod}
	Let $(X/Z,\Delta +M)$ be a log canonical g-pair with data $ X' \overset{f}{\to} X \to Z $ and $ M' $. A birational map $ \varphi \colon (X,\Delta+M) \dashrightarrow (Y,\Delta_Y+M_Y)$ over $Z$ to a $\Q$-factorial dlt g-pair $ (Y/Z,\Delta_Y+M_Y) $ is a \emph{minimal model in the sense of Birkar-Shokurov over $Z$} of the g-pair $(X,\Delta+M)$ if $ \Delta_Y =\varphi_*\Delta+E$, where $E$ is the sum of all prime divisors which are contracted by $\varphi^{-1}$, if $M_Y=\varphi_*M$, if the divisor $K_Y+\Delta_Y+M_Y$ is nef over $Z$ and if
	$$a(F,X,\Delta+M) < a(F,Y,\Delta_Y+M_Y)$$
	for any prime divisor $ F $ on $ X $ which is contracted by $\varphi $.
		
	If, moreover, the map $\varphi$ is a birational contraction, $(Y,\Delta_Y+M_Y)$ is log canonical, but $Y$ is not necessarily $\Q$-factorial if $X$ is not $\Q$-factorial (and $Y$ is $\Q$-factorial if $X$ is $\Q$-factorial), then $\varphi$ is a \emph{minimal model} of $(X/Z,\Delta+M)$.
\end{dfn}

Note that, as for the usual pairs, the two notions of minimal models coincide if the g-pair $(X/Z,\Delta+M)$ is klt, see \cite[Remark 2.4]{Bir12b}. It has only recently been shown in \cite{HH19,HanLi} that the two notions coincide in much more general contexts. More precisely:

\begin{lem} \label{lem:MMcomparison}
	The following statements hold.
	\begin{enumerate}[label=(\roman*)]
		\item If $ (X/Z,\Delta) $ is a log canonical pair, then it has a minimal model over $Z$ if and only if it has a minimal model in the sense of Birkar-Shokurov over $Z$. 
		\item If $ (X/Z, \Delta+M) $ is a $\Q$-factorial NQC log canonical pair such that $ (X,0) $ is klt, then it has a minimal model over $Z$ if and only if it has a minimal model in the sense of Birkar-Shokurov over $Z$. 
	\end{enumerate}
\end{lem}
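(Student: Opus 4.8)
The plan is to treat both directions of each equivalence separately, noting that one implication is essentially formal and the other requires an MMP argument. For the direction asserting that a minimal model (in the usual sense) is in particular a minimal model in the sense of Birkar--Shokurov: given a minimal model $\varphi\colon (X/Z,\Delta+M)\dashrightarrow (Y,\Delta_Y+M_Y)$ in the usual sense, the map $\varphi$ is a birational contraction, so $\varphi^{-1}$ contracts no divisors and hence $E=0$ in Definition~\ref{dfn:minmod}; one then checks that $\Delta_Y=\varphi_*\Delta$, that $K_Y+\Delta_Y+M_Y$ is nef over $Z$ by hypothesis, and that the discrepancy inequality $a(F,X,\Delta+M)<a(F,Y,\Delta_Y+M_Y)$ holds for every $\varphi$-exceptional prime divisor $F$. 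This last point follows from the negativity lemma applied to a common resolution, using that $\varphi$ is $(K_X+\Delta+M)$-negative (which in turn follows from the definition of minimal model in the usual sense, via the standard argument that a birational contraction to a space on which the pushforward is nef is automatically negative when the source divisor is pseudoeffective over $Z$). In case (ii) one additionally uses that $Y$ is $\Q$-factorial since $X$ is; in case (i) one appeals instead to the fact that a minimal model in the sense of Birkar--Shokurov in Definition~\ref{dfn:minmod} need not be $\Q$-factorial.

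For the converse direction --- from a minimal model in the sense of Birkar--Shokurov to a minimal model in the usual sense --- the idea is the one from \cite{HH19,HanLi}: starting from $\varphi\colon (X,\Delta+M)\dashrightarrow (Y,\Delta_Y+M_Y)$ with $Y$ $\Q$-factorial dlt and $K_Y+\Delta_Y+M_Y$ nef over $Z$, one must get rid of the divisors that $\varphi$ extracts. Let $W$ be a common log resolution with morphisms $p\colon W\to X$ and $q\colon W\to Y$, and run a suitable MMP on $W$ over $Y$ (for a boundary making $(W,\cdot)$ dlt with $q$-nef log canonical class), which contracts precisely the strict transforms of the $\varphi^{-1}$-exceptional divisors, by the discrepancy inequalities built into the definition of a Birkar--Shokurov minimal model. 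The output of this MMP, call it $X'$, receives a birational contraction from $X$ (no divisors are extracted), carries the pushforward boundary, has nef log canonical class over $Z$, and is log canonical; this is the desired minimal model in the usual sense. In case (ii), where $(X,0)$ is klt and the g-pair is NQC and $\Q$-factorial, the relevant MMP exists and terminates by the results quoted after Lemma~\ref{lem:dltblowup} (one can run an MMP with scaling of an ample divisor over $Z$), and the output is $\Q$-factorial; in case (i), for usual log canonical pairs, one invokes the corresponding existence/termination statement for this particular MMP over $Y$ from \cite{HH19} (this MMP terminates because it is, up to the set-up, a sequence of flips and divisorial contractions over the fixed base $Y$ with nef log canonical class, so special termination-type arguments apply).

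The main obstacle is the converse direction, and specifically verifying that the MMP over $Y$ contracts exactly the right divisors and terminates: one needs the discrepancy strict inequality $a(F,X,\Delta+M)<a(F,Y,\Delta_Y+M_Y)$ for $\varphi$-exceptional $F$ to guarantee that every such divisor is contracted by the MMP over $Y$, and simultaneously that no divisor with $a(\cdot,X,\cdot)\le a(\cdot,Y,\cdot)$ survives, while divisors appearing on both $X$ and $Y$ are untouched; this is exactly the content of \cite[]{HH19,HanLi} and the bookkeeping is delicate in the g-pair setting because of the subtlety of dlt singularities for g-pairs noted after the definition of dlt. Since the statement is explicitly attributed to \cite{HH19,HanLi} in the excerpt, the proof will in practice consist of assembling these references with the (easy) forward direction and checking that their hypotheses --- $\Q$-factoriality of $X$, $(X,0)$ klt, NQC --- are exactly those imposed in (i) and (ii).
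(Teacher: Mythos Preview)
Your forward direction contains a genuine gap stemming from a misreading of Definition~\ref{dfn:minmod}. You claim that in case~(i) ``a minimal model in the sense of Birkar--Shokurov \ldots\ need not be $\Q$-factorial''; this is the wrong way round. In the paper's definition, it is the Birkar--Shokurov model that \emph{must} be $\Q$-factorial dlt, whereas a minimal model in the usual sense is only required to be log canonical, and $\Q$-factorial only when $X$ is. Consequently, if $\varphi\colon (X,\Delta)\dashrightarrow (Y,\Delta_Y)$ is a usual minimal model, the target $(Y,\Delta_Y)$ is in general neither $\Q$-factorial nor dlt, so it is \emph{not} directly a Birkar--Shokurov minimal model with $E=0$, even in case~(ii) where $Y$ is $\Q$-factorial but $(Y,\Delta_Y+M_Y)$ is merely log canonical. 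The paper's fix is to take a dlt blowup $\pi\colon(\widehat Y,\widehat\Delta_Y)\to (Y,\Delta_Y)$ as in Lemma~\ref{lem:dltblowup}; the composite $X\dashrightarrow \widehat Y$ is then a Birkar--Shokurov minimal model (cf.\ \cite[Remark 2.4]{Hash18a}). Your negativity-lemma argument for the discrepancy inequality is also unnecessary: that inequality is already part of the definition of a usual minimal model (the ``moreover'' clause in Definition~\ref{dfn:minmod} only relaxes the $\Q$-factorial dlt condition on the target).

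For the converse direction, your sketch of running an MMP from a common resolution over $Y$ is in the spirit of \cite{HH19}, and the paper indeed just cites \cite[Theorem~1.7]{HH19} for~(i). For~(ii), however, the paper does something slightly different: it repeats verbatim the proof of \cite[Theorem~1.7]{HanLi}, which runs a $(K_X+\Delta+M)$-MMP with scaling of an ample divisor \emph{from $X$ itself} (this is where the hypotheses that $X$ is $\Q$-factorial and $(X,0)$ is klt are used), replacing the invocation of \cite[Theorem~5.4]{HanLi} by the assumed existence of a Birkar--Shokurov minimal model. So while your outline of the converse is not wrong, it is not the argument the paper actually gives in case~(ii).
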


\begin{proof}
	We first show (i). If $ (Y/Z, \Delta_Y) $ is a minimal model of $ (X/Z, \Delta) $, then a dlt blow-up of $ (Y/Z, \Delta_Y) $ is a minimal model in the sense of Birkar-Shokurov of $ (X/Z, \Delta) $, see also \cite[Remark 2.4]{Hash18a}. The converse follows immediately by \cite[Theorem 1.7]{HH19}.
	
	We now show (ii). If $ (Y/Z, \Delta_Y + M_Y) $ is a minimal model of $ (X/Z, \Delta + M) $, then a dlt blow-up of $ (Y/Z, \Delta_Y + M_Y) $ is a minimal model in the sense of Birkar-Shokurov of $ (X/Z, \Delta + M) $ as above. The converse follows by repeating verbatim the proof of \cite[Theorem 1.7]{HanLi}, except that our assumption replaces \cite[Theorem 5.4]{HanLi} in that proof.
\end{proof}

\subsection{Weak Zariski decompositions}

The notion of a weak Zariski decomposition is crucial in this paper. We concentrate only on its NQC version, as it behaves better in proofs.

\begin{dfn}\label{dfn:NQCWZD}
	Let $X$ be a normal quasi-projective variety over $Z$ and let $ D $ be an $ \R $-Cartier divisor on $ X $. An \emph{NQC weak Zariski decomposition} of $ D $ over $ Z $ consists of a projective birational morphism $ f \colon W \to X $ from a normal variety $ W $ and a numerical equivalence $ f^* D \equiv_Z P + N $ such that $ P $ is an NQC divisor and $ N $ is an effective $\R$-Cartier divisor on $W$.
 
	We say that a g-pair $ (X/Z,\Delta+M) $ admits an NQC weak Zariski decomposition if the divisor $ K_X + \Delta + M $ has an NQC weak Zariski decomposition over $ Z $. 
\end{dfn}

\begin{rem}\label{rem:biratWZD}
With the same notation as in Definition \ref{dfn:NQCWZD}, assume that $D$ has an NQC weak Zariski decomposition over $ Z $ and consider a projective surjective morphism $g\colon Y\to X$ from a normal variety $ Y $. Then $g^* D$ has an NQC weak Zariski decomposition over $ Z $: indeed, by considering a resolution of indeterminacies $(p,q)\colon T \to Y \times W $ of $f^{-1} \circ g \colon Y \dashrightarrow W$ such that $T$ is normal, we have
$$	p^* g^* D = q^* f^* D \equiv_Z q^*P + q^*N, $$
which proves the assertion. 

If $g$ is additionally birational, then the converse is also clear, namely if $g^*D$ has an NQC weak Zariski decomposition over $Z$, then $D$ has an NQC weak Zariski decomposition over $Z$.

Consequently, a log canonical g-pair $ (X/Z,\Delta+M) $ admits an NQC weak Zariski decomposition if and only if any dlt blowup of $ (X/Z,\Delta+M) $ admits an NQC weak Zariski decomposition. 
\end{rem}

\begin{rem} \label{rem:MMimplWZD}
	Let $ (X/Z,\Delta+M) $ be an NQC log canonical g-pair. If $ (X,\Delta+M) $ has a minimal model in the sense of Birkar-Shokurov over $Z$, then it admits an NQC weak Zariski decomposition over $Z$ by \cite[Proposition 5.1]{HanLi}. 
\end{rem}

\begin{rem}\label{rem:addM}
Let $D$ be an $\R$-Cartier divisor on a variety $X/Z$ which has an NQC weak Zariski decomposition over $Z$, and let $M$ be an $\R$-Cartier divisor on $X$ which is the pushforward of an NQC divisor on some birational model of $X$. Then $ D+M $ has an NQC weak Zariski decomposition over $Z$. Indeed, let $f\colon Y\to X$ be a projective birational morphism from a normal variety $Y$ such that $ f^* D \equiv_Z P + N $, where $ P $ is an NQC divisor and $ N \geq 0 $. We may assume that $M=f_*M'$, where $M'$ is an NQC divisor on $Y$. By the Negativity lemma \cite[Lemma 3.39]{KM98} we have $f^*M=M'+E$, where $E$ is an effective $f$-exceptional $ \R $-Cartier $\R$-divisor on $Y$. Hence 
$$ f^* (D+M) \equiv_Z (P+M') + (N+E) , $$ which proves the claim. 
\end{rem}

We need the following basic result which illustrates how weak Zariski decompositions behave under the operations of an MMP.

\begin{lem}\label{lem:updownWZD}
	Let $f\colon X\dashrightarrow Y$ be a birational contraction between $\Q$-factorial varieties which are projective over $Z$. Let $D$ be an $\R$-divisor on $X$ such that the map $f$ is $D$-non-positive. Then $D$ has an NQC weak Zariski decomposition over $Z$ if and only if $f_*D$ has an NQC weak Zariski decomposition over $ Z $.
\end{lem}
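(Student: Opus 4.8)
The plan is to use the standard resolution-of-indeterminacies picture for the $D$-non-positive birational contraction $f$ and then transport an NQC weak Zariski decomposition along it in both directions. Fix a common resolution $(p,q)\colon W\to X\times Y$ of $f$ with $W$ normal (indeed smooth). Since $f$ is $D$-non-positive we may write
\begin{equation*}
 p^*D \sim_\R q^*(f_*D) + E,
\end{equation*}
where $E\geq 0$ is $q$-exceptional.

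First suppose $D$ has an NQC weak Zariski decomposition over $Z$. By Remark \ref{rem:biratWZD}, applied to the projective surjective (in fact birational) morphism $p\colon W\to X$, the divisor $p^*D$ has an NQC weak Zariski decomposition over $Z$, say $p^*D\equiv_Z P+N$ on some further birational model; after replacing $W$ by that model (and correspondingly pulling back $q$, $p$ and $E$, using the Negativity lemma to keep $E$ effective and $q$-exceptional) we may assume the decomposition lives on $W$ itself, so $p^*D\equiv_Z P+N$ with $P$ NQC and $N\geq 0$. Combining with the displayed relation gives
\begin{equation*}
 q^*(f_*D) \equiv_Z P + (N - E),
\end{equation*}
which is not yet a weak Zariski decomposition because $N-E$ need not be effective. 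The key point is that $E$ is $q$-exceptional, so we push forward by $q$: since $q$ is birational and $q_*E=0$, we get $f_*D\equiv_Z q_*P + q_*N$ on $Y$. However $q_*P$ need not be NQC. To fix this, instead of pushing forward, observe that $E\leq N + E = p^*D - q^*(f_*D) + E$... the cleaner route is: apply the Negativity lemma type argument directly. Write $N = N' + E''$ where $N' = \min(N,E)$... this is getting complicated; the genuinely clean argument is to push the whole decomposition down via $q$ and invoke Remark \ref{rem:biratWZD} in the reverse direction, since $q$ is birational: from $q^*(f_*D)\equiv_Z P + (N-E)$ we cannot directly conclude, so instead we use that $p^*D$ already has a weak Zariski decomposition and that $q_*\big(p^*D\big) = q_*\big(q^*(f_*D)+E\big) = f_*D$ because $E$ is $q$-exceptional; then the reverse implication of Remark \ref{rem:biratWZD} applied to $q$ shows $f_*D$ has an NQC weak Zariski decomposition over $Z$, provided $p^*D$ has one on a birational model dominating $W$, which it does.

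Conversely, suppose $f_*D$ has an NQC weak Zariski decomposition over $Z$. By Remark \ref{rem:biratWZD} applied to $q$, the divisor $q^*(f_*D)$ has an NQC weak Zariski decomposition over $Z$; say after replacing $W$ by a higher model we have $q^*(f_*D)\equiv_Z P+N$ with $P$ NQC and $N\geq 0$. Then from $p^*D\sim_\R q^*(f_*D)+E$ we get $p^*D\equiv_Z P + (N+E)$, and since $E\geq 0$ this is genuinely an NQC weak Zariski decomposition of $p^*D$ over $Z$. Applying the reverse implication of Remark \ref{rem:biratWZD} to the birational morphism $p\colon W\to X$ yields an NQC weak Zariski decomposition of $D$ over $Z$. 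I expect the main obstacle to be the forward direction: the naive combination produces $N-E$, which fails to be effective, so one must either (a) push everything down through the $q$-exceptional error term $E$ and re-invoke Remark \ref{rem:biratWZD}, or (b) absorb $E$ using a Negativity-lemma/minimum argument on $W$. The correct and shortest formulation is (a): since $q$ is birational and $E$ is $q$-exceptional, $q_*(p^*D) = f_*D$, so a weak Zariski decomposition of $p^*D$ upstairs descends to one of $f_*D$ downstairs by the birational invariance in Remark \ref{rem:biratWZD}, and both implications then become symmetric applications of that remark together with the identity $p^*D \sim_\R q^*(f_*D)+E$ with $E\geq 0$ and $q$-exceptional.
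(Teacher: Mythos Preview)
Your converse direction is correct. The forward direction, however, has a genuine gap.

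Your final approach (a) invokes Remark \ref{rem:biratWZD} to pass from an NQC weak Zariski decomposition of $p^*D$ on $W$ to one of $f_*D=q_*(p^*D)$ on $Y$. But Remark \ref{rem:biratWZD} is a statement about \emph{pullbacks}: for a birational morphism $g\colon W\to Y$ and a divisor $G$ on $Y$, it says $g^*G$ has an NQC weak Zariski decomposition if and only if $G$ does. It does \emph{not} say that an arbitrary divisor $D'$ on $W$ has one if and only if $q_*D'$ does. In your situation $p^*D$ is not the pullback of anything from $Y$; rather $q^*(f_*D)=p^*D-E$, and you already observed that subtracting $E$ ruins effectivity. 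So approach (a) is circular: it assumes precisely the descent statement the lemma is meant to prove.

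The paper's argument fixes this with the Negativity lemma, but applied to the \emph{nef part} $P$ rather than to $E$ or $N$. After reducing to the case where $f$ is a morphism and $D\equiv_Z f^*G+E$ with $G=f_*D$, take $\pi\colon\widetilde X\to X$ with $\pi^*D\equiv_Z P+N$, and set $g:=f\circ\pi$. Pushing forward (using that $E$ is $f$-exceptional) gives $G\equiv_Z g_*P+g_*N$; the problem is that $g_*P$ need not be NQC. Since $P$ is nef over $Y$ and $Y$ is $\Q$-factorial, the Negativity lemma yields $g^*g_*P=P+F$ with $F\geq 0$, and hence
\[
g^*G\equiv_Z g^*g_*P+g^*g_*N = P + \big(F+g^*g_*N\big),
\]
which is an honest NQC weak Zariski decomposition of $G$ on the model $\widetilde X$. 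This is the missing idea in your forward direction.
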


\begin{proof}
	Set $G=f_*D$. If $(p,q)\colon W\to X\times Y$ is a resolution of indeterminacies of $f$, then there exists an effective $q$-exceptional $\R$-Cartier $\R$-divisor $E$ on $W$ such that $p^*D\equiv_Z q^*G+E$. By Remark \ref{rem:biratWZD} we may replace $X$ by $W$, and hence we may assume that $f$ is a morphism and
	\begin{equation}\label{eq:23-1}
		D\equiv_Z f^*G+E.
	\end{equation}
	
	Assume that $D$ has an NQC weak Zariski decomposition. Then there exists a projective birational morphism $\pi\colon \widetilde{X} \to X$ from a normal variety $ \widetilde{X} $ such that $\pi^*D\equiv_Z P+N$, where $P$ is an NQC divisor and $N\geq0$. If we set $g:=f\circ\pi$, then $G\equiv_Z g_*P+g_*N$ by \eqref{eq:23-1}. By the Negativity lemma \cite[Lemma 3.39]{KM98} we have $g^*g_*P=P+F$ for some effective $g$-exceptional $ \R $-Cartier $\R$-divisor $F$ on $\widetilde{X}$, hence 
	$$g^*G \equiv_Z g^*g_*P+g^*g_*N=P+(F+g^*g_*N),$$
	which shows that $G$ has an NQC weak Zariski decomposition over $ Z $.
	
	Conversely, if $G$ has an NQC weak Zariski decomposition over $ Z $, then we conclude similarly as in Remark \ref{rem:biratWZD}. 
\end{proof}

We will need the following lemma in the proofs of various results announced in the introduction.

\begin{lem}\label{lem:downthedimension}
	Let $n\geq k$ be positive integers. Assume that every pseudoeffective smooth pair $(X,0)$ over $Z$ of dimension $n$ admits an NQC weak Zariski decomposition over $Z$. Then every pseudoeffective smooth pair $(Y,0)$ over $Z$ of dimension $k$ admits an NQC weak Zariski decomposition over $Z$.
\end{lem}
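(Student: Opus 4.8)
The plan is to reduce the dimension one step at a time by forming a product with an elliptic curve and then restricting a weak Zariski decomposition to a general fibre of the projection; iterating this from dimension $n$ down to dimension $k$ proves the lemma. Concretely, I would argue by descending induction on $k$: the case $k=n$ is precisely the hypothesis, so I assume $k<n$ and that the lemma holds with $k+1$ in place of $k$. Combined with the hypothesis of the lemma, the inductive hypothesis says that every pseudoeffective smooth pair over $Z$ of dimension $k+1$ admits an NQC weak Zariski decomposition over $Z$, and it remains to deduce the case of dimension $k$.

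So let $(Y,0)$ be a pseudoeffective smooth pair over $Z$ of dimension $k$; we may assume that $Y$ is irreducible. Fix an elliptic curve $E$, set $Y':=Y\times E$ with projections $\pi\colon Y'\to Y$ and $\tau\colon Y'\to E$, and regard $Y'$ as a variety over $Z$ via $Y'\xrightarrow{\pi}Y\to Z$. Since $K_E\sim 0$ we have $K_{Y'}\sim\pi^*K_Y$, and a very general fibre of $Y'\to Z$ has the form $Y_z\times E$, on which $K_{Y'}$ restricts to the pullback of the pseudoeffective divisor $K_{Y_z}$; hence the smooth $(k+1)$-fold $Y'$ is pseudoeffective over $Z$. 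By the inductive hypothesis there exist a projective birational morphism $g\colon W\to Y'$ from a normal variety $W$ and a numerical equivalence $g^*K_{Y'}\equiv_Z P+N$ over $Z$ with $P$ an NQC divisor and $N$ an effective $\R$-Cartier divisor on $W$.

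The main work is to push this decomposition down to $Y$. Let $U\subseteq Y'$ be the open locus over which $g$ is an isomorphism. Since $\tau$ is open, $\tau(U)$ is a dense open subset of $E$, so for general $a\in E$ the fibre $S_a:=\tau^{-1}(a)=Y\times\{a\}\cong Y$ meets $U$; then the strict transform $\widetilde S_a\subseteq W$ of $S_a$ is a prime divisor mapping birationally onto $S_a$ under $g$. As the divisors $\widetilde S_a$ are pairwise distinct while $N$ has only finitely many components, for general $a$ one may also assume $\widetilde S_a\not\subseteq\Supp N$. Fix such an $a$, let $h_0\colon V\to\widetilde S_a$ be a resolution of singularities, write $\mu\colon V\to W$ for the induced morphism and set $h:=\pi\circ g\circ\mu\colon V\to Y$, which is projective and birational, and note that $V$ is naturally projective over $Z$, compatibly with $\mu$. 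Pulling $g^*K_{Y'}\equiv_Z P+N$ back by $\mu$ and using $K_{Y'}\sim\pi^*K_Y$ together with $\pi\circ g\circ\mu=h$, we obtain
$$ h^*K_Y\equiv_Z \mu^*P+\mu^*N. $$
Here $\mu^*P$ is again NQC, being the same non-negative combination of the pullbacks of the $\Q$-Cartier divisors nef over $Z$ that constitute $P$, while $\mu^*N$ is an effective $\R$-Cartier divisor because $N$ is $\R$-Cartier and $\mu(V)=\widetilde S_a\not\subseteq\Supp N$. Thus the right-hand side is an NQC weak Zariski decomposition of $K_Y$ over $Z$, which completes the induction.

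The only genuinely delicate point is the choice of the general fibre $S_a$: it must be general enough that its strict transform simultaneously maps birationally onto $S_a$ and avoids the support of the effective part $N$. Once this general position is secured, the preservation of the NQC property and of effectivity under pullback along $\mu$, together with the compatibility of all the morphisms in sight with the structure maps to $Z$, is routine bookkeeping.
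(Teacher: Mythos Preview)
Your argument is correct and follows essentially the same idea as the paper: take the product with an abelian variety so that the canonical divisor is pulled back from $Y$, apply the hypothesis, and restrict a weak Zariski decomposition to a general fibre of the projection. The paper does this in one step with an $(n-k)$-dimensional abelian variety and restricts to the full preimage $f^{-1}(F)$ of a very general fibre; you instead descend one dimension at a time via an elliptic curve and pass to the strict transform followed by a resolution, which makes the normality of the restricted model explicit rather than implicit in the choice of a very general fibre.
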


\begin{proof}
	We adopt the idea from \cite[Lemma 3.2]{Hash18b}. Let $(Y,0)$ be a $k$-dimensional pair as in the statement of the lemma, and set $X:=Y\times A$, where $A$ is any $(n-k)$-dimensional abelian variety. Let $p\colon X\to Y$ and $q\colon X\to A$ be the projection maps. Then $K_X\sim p^* K_Y$, and in particular, $K_X$ is pseudoeffective. By assumption there exists a projective birational morphism $f \colon W \to X$ from a normal variety $W$ such that
	\begin{equation}\label{eq:23-2}
		f^* K_X \equiv_Z P+N,
	\end{equation}
	where $P$ is an NQC divisor and $N\geq0$. Now, let $F$ be a very general fibre of $q$ and set $ F_W:=f^{-1}(F) $; note that $F\simeq Y$. By restricting \eqref{eq:23-2} to $ F_W $ we obtain $ \big( f |_{F_W} \big)^* K_F \equiv_Z P |_{F_W} + N|_{F_W} $, which gives an NQC weak Zariski decomposition of $ K_Y $ over $ Z $. 
\end{proof}

The next result is an immediate corollary of Lemma \ref{lem:downthedimension} and of Theorems \ref{thm:HH_gMM} and \ref{thm:HMref}; it will not be used in the rest of the paper.

\begin{lem}
	Let $n\geq k$ be positive integers. Assume that every pseudoeffective smooth pair $(X,0)$ over $Z$ of dimension $n$ has a minimal model over $Z$. Then every pseudoeffective smooth pair $(Y,0)$ over $Z$ of dimension $k$ has a minimal model over $Z$.
\end{lem}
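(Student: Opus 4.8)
The plan is to route the implication through NQC weak Zariski decompositions, which---unlike minimal models---descend freely in dimension by Lemma \ref{lem:downthedimension}. Concretely: (a) upgrade the hypothesis to the existence of NQC weak Zariski decompositions for smooth pairs in dimension $n$; (b) propagate this to all dimensions $\le n$, first for smooth pairs and then, via Theorem \ref{thm:HMref}, for NQC log canonical g-pairs; (c) convert back to minimal models in dimension $k$ via Theorem \ref{thm:HH_gMM}.

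For step (a), I would note that a minimal model over $Z$ of a pseudoeffective smooth pair $(X,0)$ over $Z$ of dimension $n$ gives, by Lemma \ref{lem:MMcomparison}(i), a minimal model in the sense of Birkar-Shokurov over $Z$, and the latter yields an NQC weak Zariski decomposition of $K_X$ over $Z$ by Remark \ref{rem:MMimplWZD}. Hence every pseudoeffective smooth pair over $Z$ of dimension $n$ admits an NQC weak Zariski decomposition over $Z$. For step (b), Lemma \ref{lem:downthedimension} then shows that for every integer $m$ with $1\le m\le n$, every pseudoeffective smooth pair over $Z$ of dimension $m$ admits an NQC weak Zariski decomposition over $Z$; applying Theorem \ref{thm:HMref} in each such dimension $m$, the same conclusion holds for every pseudoeffective NQC log canonical g-pair over $Z$ of dimension $\le n$.

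For step (c), I would feed this into Theorem \ref{thm:HH_gMM} in dimension $k$: the given smooth pair $(Y,0)$ of dimension $k$ is in particular a pseudoeffective NQC log canonical g-pair admitting an NQC weak Zariski decomposition over $Z$, and by step (b) the auxiliary lower-dimensional input required to run Theorem \ref{thm:HH_gMM} (the existence of NQC weak Zariski decompositions for NQC log canonical g-pairs of smaller dimension) is available in every dimension $\le n$; hence $(Y,0)$ has a minimal model over $Z$. I do not anticipate a genuine obstacle here, since all the substance is contained in Lemma \ref{lem:downthedimension} and Theorems \ref{thm:HMref} and \ref{thm:HH_gMM}. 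The only point worth a moment's care is that the product construction underlying Lemma \ref{lem:downthedimension} passes directly from dimension $n$ to every dimension $\le n$, so one should carry the weak Zariski decomposition statement through all of these dimensions---not merely dimension $k$---before appealing to Theorem \ref{thm:HH_gMM}.
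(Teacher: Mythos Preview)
Your proposal is correct and follows essentially the same route as the paper's own proof: pass from minimal models to NQC weak Zariski decompositions via Remark \ref{rem:MMimplWZD}, descend in dimension via Lemma \ref{lem:downthedimension}, upgrade to g-pairs via Theorem \ref{thm:HMref}, and conclude via Theorem \ref{thm:HH_gMM}. The only cosmetic difference is that you interpose Lemma \ref{lem:MMcomparison}(i) before Remark \ref{rem:MMimplWZD}, which is harmless (and unnecessary, since $(X,0)$ is klt so the two notions of minimal model already coincide).
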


\begin{proof}
	Let $ (Y,0) $ be a pseudoeffective smooth pair over $Z$ of dimension $k$. By assumption and Remark \ref{rem:MMimplWZD}, every pseudoeffective smooth pair $(X,0)$ over $Z$ of dimension $n$ admits an NQC weak Zariski decomposition over $Z$. By Lemma \ref{lem:downthedimension} and Theorem \ref{thm:HMref}, every NQC log canonical g-pair over $ Z $ of dimension at most $ n $ admits an NQC weak Zariski decomposition over $ Z $. In particular, $ (Y,0) $ admits an NQC weak Zariski decomposition over $Z$ and hence has a minimal model over $Z$ by Theorem \ref{thm:HH_gMM}.
\end{proof}

\subsection{On effectivity}

We prove a criterion for a g-pair to be relatively effective in Corollary \ref{cor:relativeMMPcontracting} below. We use the definition of the \emph{relatively movable cone} $\overline{\Mov}(X/Y)$ of a projective morphism $f\colon X\to Y$, see \cite[Definition 2.1]{Fuj11b}; we then say that an $\R$-Cartier $\R$-divisor $D$ on $X$ is \emph{movable over $Y$} if the class of $D$ belongs to $\overline{\Mov}(X/Y)$.

We first need the following result, cf.\ \cite[Theorem 2.3]{Fuj11b}.

\begin{lem}\label{lem:easytermination}
	Let $(X/Z,\Delta+M)$ be a $\Q$-factorial log canonical g-pair such that $ (X,0) $ is klt and $ K_X + \Delta + M $ is pseudoeffective over $Z$. Assume that we have a $(K_X+\Delta+M)$-MMP with scaling of an ample divisor $A$ over $Z$. Then on some variety $X_i$ in this MMP, the strict transform of $K_X+\Delta+M$ becomes movable over $Z$. In particular, the restriction of this strict transform to a very general fibre of the induced morphism $X_i\to Z$ is movable.
\end{lem}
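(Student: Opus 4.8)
The plan is to adapt the proof of \cite[Theorem 2.3]{Fuj11b} to the generalised setting, replacing the MMP theory for $\Q$-factorial dlt pairs by the one for $\Q$-factorial log canonical g-pairs $(X/Z,\Delta+M)$ with $(X,0)$ klt recalled in Section \ref{sec:prelim}: namely the relative cone and contraction theorems, the existence of the relevant flips, and the facts that the relative Picard number $\rho(X_i/Z)$ is a non-negative integer which drops by one at a divisorial contraction and is unchanged by a flip. Write $D\defeq K_X+\Delta+M$ and let
$$ (X,\Delta+M)=(X_0,\Delta_0+M_0)\dashrightarrow(X_1,\Delta_1+M_1)\dashrightarrow\cdots $$
be the given $D$-MMP with scaling of $A$ over $Z$, with nef thresholds $\lambda_0\geq\lambda_1\geq\dots\geq0$, and let $D_i$ denote the strict transform of $D$ on $X_i$. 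Since each step of the MMP is $D$-non-positive, every $D_i$ is pseudoeffective over $Z$.

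First I would dispose of the easy cases. If the MMP is finite, or if $\lambda_i=0$ for some $i$, then $D_i$ is nef over $Z$; since a divisor that is nef over $Z$ is a limit of divisors that are ample, hence semiample, hence movable, over $Z$, we have $\overline{\Nef}(X_i/Z)\subseteq\overline{\Mov}(X_i/Z)$, and so $D_i$ is movable over $Z$. Hence I may assume that the MMP is infinite and that $\lambda_i>0$ for all $i$. Since only finitely many of its steps are divisorial contractions, there is an index $i_0$ such that every step $X_j\dashrightarrow X_{j+1}$ with $j\geq i_0$ is a flip, and in particular an isomorphism in codimension one. Such a flip identifies the relative linear systems of $D_j$ and $D_{j+1}$ outside loci of codimension $\geq2$, and therefore preserves the existence of a fixed prime divisor over $Z$ — and, more generally, the divisorial part of the relative Nakayama--Zariski decomposition. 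Thus $D_j$ is movable over $Z$ if and only if $D_{i_0}$ is, for every $j\geq i_0$, and it suffices to prove that $D_{i_0}$ is movable over $Z$.

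This last step is the crux, and I expect it to be the main obstacle. Put $\lambda_\infty\defeq\lim_i\lambda_i\geq0$. If $\lambda_\infty>0$, then, using that $\lambda_j\geq\lambda_\infty$ and that the extremal ray $R_j$ contracted at the $j$-th step satisfies $A_j\cdot R_j>0$, one sees that every flip with $j\geq i_0$ is also a step of a $(D+\lambda_\infty A)$-MMP; since $D+\lambda_\infty A$ is big over $Z$, such an MMP terminates by the known existence of minimal models for log canonical g-pairs whose boundary is big over $Z$, contradicting the infinitude of the original MMP. So $\lambda_\infty=0$, and this is the hard case. Here I would argue, following \cite{Fuj11b}, that $D_{i_0}$ is already movable over $Z$: if $G$ were a fixed prime divisor of $D_{i_0}$ over $Z$, then $G$ would be untouched by all subsequent flips, while the combination of $\lambda_i\to0$ with the absence of divisorial contractions after $X_{i_0}$ rules this out. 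Making this argument precise in the generalised setting — in particular controlling the interaction of the limit $\lambda_i\to0$ with the relatively diminished base locus — is the delicate point.

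Finally, for the last assertion: once $D_i$ is movable over $Z$, I would write its class as a limit, in $N^1(X_i/Z)$, of classes of $\Q$-Cartier divisors $M_k$ that are movable over $Z$, so that each relative stable base locus $\sB(M_k/Z)$ has codimension $\geq2$ in $X_i$. For a very general fibre $F$ of $X_i\to Z$ — chosen to avoid the countably many loci where either some $\sB(M_k/Z)$ or the relevant direct images behave badly — the set $\sB(M_k/Z)\cap F$ has codimension $\geq2$ in $F$ and coincides with $\sB(M_k|_F)$, so each $M_k|_F$ is movable. Since by the definition of numerical equivalence over $Z$ restriction to $F$ induces a continuous linear map $N^1(X_i/Z)\to N^1(F)$, passing to the limit yields that $D_i|_F$ is movable, as claimed.
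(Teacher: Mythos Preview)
Your overall architecture matches the paper's: split on whether $\lambda_\infty>0$ or $\lambda_\infty=0$, terminate via BCHM in the first case, and argue movability directly in the second. Two remarks, one minor and one substantive.

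\medskip

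\emph{Minor.} In the case $\lambda_\infty>0$, using $D+\lambda_\infty A$ is slightly awkward: at a step with $\lambda_j=\lambda_\infty$ you only get $(D_j+\lambda_\infty A_j)\cdot R_j=0$, not $<0$. The paper avoids this by running the MMP on $D+\tfrac{\lambda_\infty}{2}A$ instead, and then invokes \cite[Lemma 3.5]{HanLi} to rewrite $K_X+\Delta+M+\tfrac{\lambda_\infty}{2}A\sim_{\R,Z}K_X+B$ with $(X,B)$ \emph{klt} and $B$ big over $Z$, so that \cite[Corollary 1.4.2]{BCHM} applies directly. Your appeal to ``known existence of minimal models for log canonical g-pairs whose boundary is big over $Z$'' skips this reduction to the klt case.

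\medskip

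\emph{Substantive.} In the case $\lambda_\infty=0$ you explicitly flag the argument as incomplete (``the delicate point''), and indeed your sketch does not supply a mechanism by which $\lambda_i\to0$ forces the vanishing of a putative fixed component $G$. The paper's argument is short and different in spirit from what you outline. After reducing to the situation where all steps are flips, one observes that on each $X_i$ the divisor $D_i+\lambda_iA_i$ is nef over $Z$; choosing relatively ample divisors $H_i$ on $X_i$ whose strict transforms $H_{i,X}$ on $X$ satisfy $[H_{i,X}]\to0$ in $N^1(X/Z)$, the divisor $D_i+\lambda_iA_i+H_i$ is ample over $Z$. Since each $X\dashrightarrow X_i$ is an isomorphism in codimension one, the strict transform $D+\lambda_iA+H_{i,X}$ on $X$ is movable over $Z$. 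Letting $i\to\infty$ and using that $\overline{\Mov}(X/Z)$ is closed yields that $D$ itself is movable over $Z$, a contradiction. No analysis of fixed components or of the diminished base locus is needed.

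\medskip

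Your treatment of the final sentence (restriction to a very general fibre) is more detailed than the paper's, which simply asserts it follows easily; your argument there is fine.
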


\begin{proof}
	The proof is analogous to the proof of \cite[Theorem 2.3]{Fuj11b} and has already appeared implicitly in the proof of \cite[Proposition 3.8]{HanLi}. Nevertheless, we include the details for the benefit of the reader.
	
	First of all, we may run such an MMP by \cite[Lemma 3.5]{HanLi}. Let $(X_i/Z,\Delta_i+M_i)$ be the g-pairs in the MMP, let $A_i$ be the strict transform of $A$ on $X_i$, and set $\nu_i := \inf\{t\in\R_{\geq 0} \mid K_{X_i} +\Delta_i+M_i+tA_i\text{ is nef over }Z\}$ and $\nu:=\lim \limits_{i \to \infty} \nu_i$.
	
	If $\nu> 0$, then the given MMP is also an MMP on $K_X +\Delta + M + \frac{\nu}{2} A$. Thus, by \cite[Lemma 3.5]{HanLi} there exists a boundary divisor $B$ such that 
	$K_X+B \sim_{\R,Z} K_X + \Delta + M + \frac{\nu}{2} A  $, 
	the pair $(X,B)$ is klt and $ B $ is big over $ Z $. By \cite[Corollary 1.4.2]{BCHM}, the $(K_X+B)$-MMP with scaling of $ A $ over $ Z $ terminates, and therefore the original MMP terminates. 
	
	If $\nu= 0$, then we assume that the MMP does not terminate (otherwise the assertion is clear), and that the strict transform of $K_X+\Delta+M$ never becomes movable over $Z$. We may also assume that the MMP consists only of flips. For each $i$, let $H_i$ be a divisor on $X_i$ which is ample over $Z$ and such that, if $H_{i,X}$ is the strict transform of $H_i$ on $X$, then $\lim\limits_{i\to\infty}[H_{i,X}]=0$ in $N^1(X/Z)$. Since $K_{X_i}+\Delta_i+M_i+\nu_i A_i+H_i$ is ample over $Z$ for every $i$, the strict transform $K_X+\Delta+M+\nu_i A+H_{i,X}$ is movable over $Z$ for every $i$, and therefore $K_X+\Delta+M$ is movable over $Z$, a contradiction. 
	
	The last claim in the lemma follows easily from the first one.
\end{proof}

\begin{cor}\label{cor:relativeMMPcontracting}
	Let $(X,\Delta+M)$ be a quasi-projective $\Q$-factorial dlt g-pair and let $f\colon X\to Y$ be a projective surjective morphism to a normal quasi-projective variety $Y$. Assume that $\nu\big(F,(K_X+\Delta+M)|_F\big)=0$ and $h^1(F,\OO_F)=0$ for a very general fibre $F$ of $f$. Then $K_X+\Delta+M$ is effective over $Y$.
\end{cor}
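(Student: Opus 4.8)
The plan is to reach, via a relative minimal model program, a birational model on which $K_X+\Delta+M$ becomes movable, and then to exploit the vanishing of the numerical dimension on a very general fibre. Write $D:=K_X+\Delta+M$. Since $\nu\big(F,D|_F\big)$ is defined, $D|_F$ is pseudoeffective for a very general fibre $F$ of $f$, so $D$ is pseudoeffective over $Y$. As $(X,\Delta+M)$ is a $\Q$-factorial dlt g-pair, we may run a $(K_X+\Delta+M)$-MMP over $Y$ with scaling of an ample divisor, and by Lemma \ref{lem:easytermination} some model $X_i$ in this MMP has the property that the strict transform $D_i=K_{X_i}+\Delta_i+M_i$ of $D$ is movable over $Y$; moreover $D_i|_{F_i}$ is movable for a very general fibre $F_i$ of the induced morphism $f_i\colon X_i\to Y$, and $(X_i,\Delta_i+M_i)$ is again a $\Q$-factorial dlt g-pair.

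First I would transport the hypotheses from $F$ to $F_i$. The induced birational contraction $\varphi\colon X\dashrightarrow X_i$ over $Y$ is $(K_X+\Delta+M)$-non-positive, so over a very general point of $Y$ it restricts to a $\big(D|_F\big)$-non-positive birational map $F\dashrightarrow F_i$; since the numerical dimension is a birational invariant which does not decrease upon adding an effective divisor, this yields $0\le\nu\big(F_i,D_i|_{F_i}\big)\le\nu\big(F,D|_F\big)=0$. Moreover $F$ and $F_i$ are birational and have rational singularities, being very general fibres of the klt varieties $X$ and $X_i$, so $h^1(F_i,\OO_{F_i})=h^1(F,\OO_F)=0$.

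The heart of the argument is to show that $D_i|_{F_i}\sim_\R 0$. Since $D_i|_{F_i}$ is movable, its Nakayama--Zariski negative part vanishes (on a resolution of $F_i$ if necessary), while $\nu\big(F_i,D_i|_{F_i}\big)=0$ forces its positive part to be numerically trivial by \cite[Chapter~V]{Nak04}; hence $D_i|_{F_i}\equiv 0$. Because $h^1(F_i,\OO_{F_i})=0$ we have $\mathrm{Pic}^0(F_i)=0$, so every numerically trivial integral divisor on $F_i$ is torsion; writing $D_i|_{F_i}=\sum_k a_kD_{i,k}$ with $D_{i,k}$ prime divisors and decomposing the relation $\sum_k a_k[D_{i,k}]=0$ in $N^1(F_i)_\R$ as a real combination of $\Q$-linearly trivial integral relations, we conclude $D_i|_{F_i}\sim_\R 0$; in particular $\kappa\big(F_i,D_i|_{F_i}\big)\ge 0$.

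It remains to globalise and to descend. By the evident $\R$-divisor analogue of Lemma \ref{lem:Grauert}(i) there is an effective $\R$-divisor $E_i$ on $X_i$ with $D_i\sim_{\R,Y}E_i$, so $D_i=K_{X_i}+\Delta_i+M_i$ is effective over $Y$. Since $\varphi$ is $(K_X+\Delta+M)$-non-positive, effectivity over $Y$ descends from $X_i$ to $X$ by the argument used in the proof of Lemma \ref{lem:updownWZD} (with effectivity over $Y$ in place of the existence of an NQC weak Zariski decomposition), and therefore $K_X+\Delta+M$ is effective over $Y$. I expect the main obstacle to be the third paragraph: extracting numerical triviality of $D_i|_{F_i}$ from movability together with $\nu=0$ relies on Nakayama's analysis of the divisorial Zariski decomposition, and, since $F_i$ may be singular and $\Delta$ and $M$ are genuine $\R$-divisors, some care is needed in passing between numerical, $\Q$-linear and $\R$-linear equivalence.
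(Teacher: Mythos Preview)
Your strategy is exactly the paper's: run an MMP with scaling over $Y$ until the strict transform becomes movable (Lemma~\ref{lem:easytermination}), combine movability with $\nu=0$ on a very general fibre to force numerical triviality there via \cite[Propositions~III.1.14(1) and~V.2.7(8)]{Nak04}, upgrade this to linear triviality using $h^1=0$, and globalise. The transport of $\nu=0$ and $h^1=0$ to the new fibre, and the final descent of effectivity back to $X$, are fine.

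The one genuine soft spot is your globalisation. You invoke an ``evident $\R$-divisor analogue of Lemma~\ref{lem:Grauert}(i)'', but the proof of that lemma passes through $h^0\big(X_\eta,\OO_{X_\eta}(mD)\big)$, which has no direct meaning for $\R$-divisors; and your $\Q$-decomposition of $D_i|_{F_i}$ lives only on the fibre, so it does not furnish $\Q$-divisors on $X_i$ to which Lemma~\ref{lem:Grauert} could be applied. The paper closes this gap by inserting one extra step before decomposing: from $(K_{X'}+\Delta'+M')|_{F'}\equiv 0$ it uses Lemma~\ref{lem:numtriv} to spread numerical triviality over an open set $U\subseteq Y$, and then performs the rational decomposition \emph{on $U':=f'^{-1}(U)$} rather than on the fibre, writing $(K_{X'}+\Delta'+M')|_{U'}=\sum r_jD_j$ with each $D_j$ a $\Q$-divisor on $U'$ satisfying $D_j\equiv_U 0$. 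Now $h^1(F',\OO_{F'})=0$ gives $D_j|_{F'}\sim_\Q 0$, Lemma~\ref{lem:Grauert}(ii) (as stated, for $\Q$-divisors) yields $D_j\sim_{\Q,U}0$ after shrinking $U$, and summing gives $(K_{X'}+\Delta'+M')|_{U'}\sim_{\R,U}0$; one then concludes with \cite[Lemma~3.2.1]{BCHM}. This is precisely the care with real coefficients you flagged as the likely obstacle, and it is the only point where your write-up needs patching.
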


\begin{proof}
	By \cite[Lemma 3.5]{HanLi} we may run a $(K_X+\Delta+M)$-MMP with scaling of an ample divisor over $Y$. By Lemma \ref{lem:easytermination}, after finitely many steps of this MMP we obtain a $(K_X+\Delta+M)$-negative map $\theta\colon X\dashrightarrow X'$ such that $(K_{X'}+\Delta'+M')|_{F'}$ is movable, where $F'$ is a very general fibre of the induced morphism $f'\colon X'\to Y$, and $\Delta':=\theta_*\Delta$ and $M':=\theta_*M$. Additionally, since $\nu\big(F,(K_X+\Delta+M)|_F\big)=0$ by assumption, it follows by \cite[\S2.2]{LP20a} that $\nu\big(F',(K_{X'}+\Delta'+M')|_{F'}\big)=0$. Hence $(K_{X'}+\Delta'+M')|_{F'}\equiv0$ by \cite[Propositions III.1.14(1) and V.2.7(8)]{Nak04}.
	
	By Lemma \ref{lem:numtriv} there exists an open subset $U\subseteq Y$ such that 
	$$(K_{X'}+\Delta'+M')|_{U'}\equiv_U0,$$
	where $U'=f'^{-1}(U)$. The natural projection $\Div_\R(U')\to N^1(U'/U)_\R$ is defined over $\Q$, so there exist $\Q$-divisors $D_1,\dots,D_m$ on $U'$ such that $D_i\equiv_U 0$ for each $ i \in \{1, \ldots, m\} $ and real numbers $r_1,\dots,r_m$ such that 
	\begin{equation}\label{eq:36}
	(K_{X'}+\Delta'+M')|_{U'}=\sum\limits_{i=1}^m r_iD_i.
	\end{equation}
	Now, since $ X $ is a klt variety by \cite[Remark 2.3]{HanLi}, so is $ F $, and thus $F$ has rational singularities. The same also holds for $F'$, because $X'$ is a klt variety by \cite[Lemma 3.7 and Remark 2.3]{HanLi}. As $h^1(F,\OO_{F})=0$ by assumption, we therefore obtain $h^1(F',\OO_{F'})=0$. We deduce that $D_i|_{F'}\sim_\Q0$ for every $i \in \{1, \ldots, m\}$, and, after possibly shrinking $U$, by Lemma \ref{lem:Grauert} we infer that $D_i\sim_{\Q,U} 0$. But then \eqref{eq:36} yields
	$$(K_{X'}+\Delta'+M')|_{U'}\sim_{\R,U}0.$$
	It follows now by \cite[Lemma 3.2.1]{BCHM} that $K_{X'}+\Delta'+M'$ is effective over $Y$, and therefore so is $K_X+\Delta+M$.
\end{proof}

\subsection{MMPs as sequences of flops}

We need the following two results that exploit the boundedness of extremal rays. 

\begin{lem} \label{lem:trivMMP}
	Let $ (X/Z,\Delta) $ be a log canonical pair such that $ K_X + \Delta $ is nef over $Z$. Then there exists $ \varepsilon_0 >0 $ such that for every $ 0<\varepsilon <\varepsilon_0$, any $ \big(K_X + (1-\varepsilon) \Delta\big) $-MMP over $Z$ is $ (K_X + \Delta) $-trivial. 
\end{lem}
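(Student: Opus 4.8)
The plan is to exploit the boundedness of $(K_X+\Delta)$-trivial extremal rays together with a compactness argument for the nef cone. First I would fix a relatively ample divisor $A$ over $Z$ and, working on the relative cone of curves $\overline{\NE}(X/Z)$, consider the rays $R$ that are $(K_X+\Delta)$-negative. Since $K_X+\Delta$ is nef over $Z$, any such ray is actually $(K_X+\Delta)$-trivial, i.e. $(K_X+\Delta)\cdot R = 0$, so running any $\big(K_X+(1-\varepsilon)\Delta\big)$-MMP over $Z$ contracts only rays on which $K_X+\Delta$ is trivial — provided $\varepsilon$ is small enough that no genuinely $(K_X+\Delta)$-positive ray becomes $\big(K_X+(1-\varepsilon)\Delta\big)$-negative. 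This last point is exactly where boundedness enters.

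**Key steps.** (1) By the boundedness of extremal rays for log canonical pairs (the relative Cone Theorem / length of extremal rays, e.g. via \cite{KM98} or \cite{Fuj11b}), there is a constant $c>0$, depending only on $\dim X$, such that every $(K_X+\Delta)$-negative extremal ray $R$ over $Z$ is spanned by a curve $C$ with $0 < -(K_X+\Delta)\cdot C \le c$ or with $(K_X+\Delta)\cdot C = 0$; more usefully, for any MMP step the contracted ray is spanned by a curve of bounded $(K_X+\Delta+A)$-degree. (2) Choose $\varepsilon_0 > 0$ so small that $\varepsilon_0 \cdot \Delta\cdot C < $ (the minimal positive value of $(K_X+\Delta)\cdot C$ over the relevant bounded family of curves); since $K_X+\Delta$ is nef, $(K_X+\Delta)\cdot C \ge 0$ for every curve $C/Z$, and if $(K_X+\Delta)\cdot C > 0$ then it is bounded below by a positive constant on the bounded family, so $\big(K_X+(1-\varepsilon)\Delta\big)\cdot C = (K_X+\Delta)\cdot C - \varepsilon\,\Delta\cdot C > 0$ for all $0<\varepsilon<\varepsilon_0$. (3) Conclude that any curve generating a $\big(K_X+(1-\varepsilon)\Delta\big)$-negative extremal ray over $Z$ must satisfy $(K_X+\Delta)\cdot C = 0$; hence each divisorial contraction or flip in such an MMP is $(K_X+\Delta)$-trivial, and the property $K_X+\Delta$ nef over $Z$ is preserved at each stage, so the same argument applies inductively along the whole MMP. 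This requires knowing that the bounding constant $c$ is preserved under the MMP, which holds because the relevant pairs remain log canonical of the same dimension with coefficient set in a fixed DCC (indeed finite) set, and the length-of-extremal-rays bound is uniform.

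**Main obstacle.** The delicate point is Step (2)–(3): making the choice of $\varepsilon_0$ genuinely \emph{uniform} along the entire MMP, not just for the first step. One must argue that after each flip/divisorial contraction the strict transform $\Delta_i$ still has coefficients in the original finite set, that $K_{X_i}+\Delta_i$ stays nef over $Z$ (which follows from $(K_X+\Delta)$-triviality of the step, via the Negativity Lemma), and that the extremal-ray length bound — or, better, a bound on $(K_{X_i}+\Delta_i)\cdot C_i$ for curves $C_i$ spanning the contracted rays coming from a relatively ample class pulled back appropriately — does not deteriorate. The cleanest way is probably to phrase the boundedness intrinsically in terms of a fixed relatively ample $A$ on $X$ and its strict transforms, using that an MMP step contracts a ray with $0 \le (K_{X_i}+\Delta_i+A_i)\cdot C_i$ bounded, so the positive lower bound for $(K_{X_i}+\Delta_i)\cdot C_i$ when it is nonzero can be made independent of $i$; then $\varepsilon_0$ depending only on this bound and on an upper bound for $\Delta_i\cdot C_i$ works throughout.
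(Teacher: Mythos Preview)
Your overall strategy---use the length of extremal rays to bound the relevant intersection numbers and then argue that $(K_X+\Delta)$-positive rays cannot become $\big(K_X+(1-\varepsilon)\Delta\big)$-negative---is correct in spirit and is indeed the method behind \cite[Proposition 3.2(5)]{Bir11}, which the paper simply cites. However, Step~(2) contains a genuine gap: you assert the existence of a ``minimal positive value of $(K_X+\Delta)\cdot C$ over the relevant bounded family of curves'' without justification. For an $\R$-divisor $\Delta$ there is no reason \emph{a priori} why the positive values of $(K_X+\Delta)\cdot C$ should be bounded away from zero, even over a bounded family; the intersection numbers are real, not integral. Your proposed fix via a fixed relatively ample $A$ and its strict transforms does not supply this lower bound either, and those strict transforms are not ample after flips, so the bound on $\Delta_i\cdot C_i$ you need does not obviously persist.

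The missing ingredient, and the point of the argument in \cite{Bir11} (see also the paper's proof of the g-pair analogue, Lemma~\ref{lem:trivMMPg}), is to write $K_X+\Delta=\sum r_j(K_X+\Delta_j)$ as a convex combination of nef $\Q$-Cartier divisors with each $(X,\Delta_j)$ log canonical, using \cite[Proposition 3.2(3)]{Bir11}. Fixing $r$ so that each $r(K_X+\Delta_j)$ is Cartier, the values $r(K_X+\Delta)\cdot C$ lie in the set $\{\sum r_j n_j : n_j\in\Z_{\geq0}\}$, whose positive elements are bounded below by some $\beta>0$. This gives the uniform positive lower bound you were missing. Uniformity along the MMP then comes not from an ample class but from \cite[Theorem 3.25(4)]{KM98}: a $(K_X+\Delta_j)$-trivial contraction preserves the Cartier index of $K_X+\Delta_j$, so $\beta$ and $r$ are unchanged at each step. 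With these two points in place your inductive argument goes through.
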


\begin{proof}
This is a special case of \cite[Proposition 3.2(5)]{Bir11}; note that the proof of that result works also for log canonical pairs, see \cite[Remark 18.8]{Fuj11a}.
\end{proof}

The following is the analogue of the previous result in the context of g-pairs; similar statements were also observed in \cite{HM18} and \cite[\S 3.3]{HanLi}.

\begin{lem} \label{lem:trivMMPg}
	Let $ (X/Z,\Delta+M) $ be a $ \Q $-factorial NQC log canonical g-pair such that $ (X,0) $ is klt. Assume that $ K_X + \Delta +M$ is nef over $Z$. Then there exists $ \varepsilon_0 >0 $ such that for every $ 0<\varepsilon <\varepsilon_0$, any $ \big(K_X + \Delta+(1-\varepsilon) M\big) $-MMP with scaling of an ample divisor over $Z$ is $ (K_X + \Delta+M) $-trivial. 
\end{lem}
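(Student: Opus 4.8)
The plan is to mimic the proof of Lemma \ref{lem:trivMMP}, carrying the argument over to the generalised setting by using the boundedness of extremal rays for NQC log canonical g-pairs with klt underlying variety. First I would recall that, since $(X,0)$ is klt, by \cite[Lemma 3.5]{HanLi} one may run a $\big(K_X+\Delta+(1-\varepsilon)M\big)$-MMP with scaling of an ample divisor over $Z$; moreover the relevant cone and contraction theorems, together with the boundedness of length of extremal rays, are available in this category (see \cite[Section 4]{BZ16} and \cite[\S3.1--\S3.3]{HanLi}). The key point is that there is a uniform bound $\ell$ (depending only on $\dim X$) such that any $\big(K_X+\Delta+(1-\varepsilon)M\big)$-negative extremal ray $R$ over $Z$ is spanned by a curve $C$ with $0<-\big(K_X+\Delta+(1-\varepsilon)M\big)\cdot C\le 2\dim X$.

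The core computation is the following: write, for a curve $C$ spanning such an extremal ray,
\[
\big(K_X+\Delta+(1-\varepsilon)M\big)\cdot C=(K_X+\Delta+M)\cdot C-\varepsilon\, M\cdot C.
\]
Since $M$ is NQC, $M\cdot C\ge 0$, and since $K_X+\Delta+M$ is nef over $Z$, $(K_X+\Delta+M)\cdot C\ge 0$. For the left-hand side to be negative we therefore need $(K_X+\Delta+M)\cdot C<\varepsilon\, M\cdot C$. Now I would invoke boundedness of extremal rays in the form that the numbers $M\cdot C$ (for $M$ a fixed NQC divisor, or rather for a fixed decomposition $M'=\sum a_j M'_j$ into nef $\Q$-Cartier divisors) that can occur on curves generating $\big(K_X+\Delta+(1-\varepsilon)M\big)$-negative extremal rays are bounded above, uniformly in $\varepsilon$: indeed such $C$ satisfies $-\big(K_X+\Delta+(1-\varepsilon)M\big)\cdot C\le 2\dim X$, hence $(1-\varepsilon)M\cdot C\le 2\dim X+(K_X+\Delta)\cdot C$, and one bounds $(K_X+\Delta)\cdot C$ using that $(K_X+\Delta+M)\cdot C\ge 0$, i.e. $(K_X+\Delta)\cdot C\ge -M\cdot C$; combining with $\varepsilon$ bounded away from $1$ gives $M\cdot C\le B$ for some constant $B=B(\dim X)$ once $\varepsilon<1/2$, say. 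Thus on any such ray either $(K_X+\Delta+M)\cdot C=0$, and the step is $(K_X+\Delta+M)$-trivial, or $(K_X+\Delta+M)\cdot C$ is a positive real number bounded below — but here one must be careful, as a priori there is no positive lower bound on intersection numbers with $K_X+\Delta+M$.

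This is the main obstacle, and the way around it is exactly the device used in \cite[Proposition 3.2(5)]{Bir11}: one chooses $\varepsilon_0$ after the MMP is run, or rather one argues that for a \emph{single} fixed MMP the relevant data is already bounded. More precisely, I would argue by following the MMP step by step: at the $i$-th step with extremal ray $R_i$ spanned by $C_i$, if $(K_{X_i}+\Delta_i+M_i)\cdot C_i>0$ then, since $M_i\cdot C_i\le B$ and $(K_{X_i}+\Delta_i+(1-\varepsilon)M_i)\cdot C_i<0$, we get $0<(K_{X_i}+\Delta_i+M_i)\cdot C_i<\varepsilon B$. As $K_X+\Delta+M$ is nef over $Z$ and each MMP step is $(K_{X_i}+\Delta_i+M_i)$-non-negative precisely on the complement — here one uses that the strict transform of a nef divisor stays nef exactly when all steps are trivial for it, the point being that the first non-trivial step forces a curve $C_i$ with $(K_{X_i}+\Delta_i+M_i)\cdot C_i>0$ whose numerical class, pushed back to $X$, would have to be movable but intersect the nef class $K_X+\Delta+M$ — one derives a contradiction for $\varepsilon$ small, just as in the usual-pair case via the ACC-type / boundedness argument in \cite[Proposition 3.2]{Bir11} and \cite[Remark 18.8]{Fuj11a}, now applied in the g-pair setting using \cite[\S3.3]{HanLi}. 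Hence for $0<\varepsilon<\varepsilon_0$ every step of the MMP is $(K_X+\Delta+M)$-trivial, which is the assertion. I expect the delicate part to be the bookkeeping that the constant $\varepsilon_0$ can indeed be chosen uniformly (independent of the particular MMP), which is handled exactly as in the cited references once boundedness of extremal rays for NQC log canonical g-pairs is in place.
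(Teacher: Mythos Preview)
Your proposal correctly identifies the overall shape of the argument and, crucially, the main obstacle: once you have bounded $M\cdot C$ from above, you still need a \emph{positive lower bound} on $(K_X+\Delta+M)\cdot C$ whenever it is nonzero. However, you do not actually supply this lower bound; you only gesture at ``ACC-type / boundedness'' and the references for the usual-pair case. This is the genuine gap. The paper closes it by invoking \cite[Proposition 3.16]{HanLi} to write
\[
K_X+\Delta+M=\sum_{i=1}^m \alpha_i\,(K_X+\Delta_i+M_i),
\]
where each $(X,\Delta_i+M_i)$ is log canonical with \emph{rational} data and $K_X+\Delta_i+M_i$ nef over $Z$, and then fixes $r$ so that each $r(K_X+\Delta_i+M_i)$ is Cartier. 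For any curve $C$, $r(K_X+\Delta_i+M_i)\cdot C\in\Z_{\ge 0}$, so $r(K_X+\Delta+M)\cdot C$ lies in the semigroup $\mathcal S=\{\sum\alpha_i n_i>0:n_i\in\Z_{\ge 0}\}$, which has a positive infimum $\beta$. Setting $\varepsilon_0=\beta/(\beta+2r\dim X)$ then forces $(K_X+\Delta+M)\cdot C<\beta/r$ on any relevant extremal curve, hence $(K_X+\Delta+M)\cdot C=0$. This discreteness trick, coming from the NQC hypothesis, is the missing idea in your proposal, and it also makes $\varepsilon_0$ genuinely uniform (independent of the MMP), resolving the doubt you raise at the end. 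The propagation to later steps then uses \cite[Theorem 3.25(4)]{KM98}: since each $(K_X+\Delta_i+M_i)\cdot C=0$, nefness and the Cartier index $r$ persist after the contraction/flip, so the same constants work throughout.

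Two smaller issues. First, ``since $M$ is NQC, $M\cdot C\ge 0$'' is not correct: $M$ is only the pushforward of an NQC divisor from a higher model and need not be nef on $X$. The paper instead deduces $M\cdot R>0$ and $(K_X+\Delta)\cdot R<0$ directly from $(K_X+\Delta+M)\cdot R\ge 0$ and $(K_X+\Delta+(1-\varepsilon)M)\cdot R<0$. Second, your derivation of the upper bound on $M\cdot C$ is circular as written; the paper obtains it by applying boundedness of extremal rays \emph{twice}, once to $(X,\Delta)$ (which is log canonical) to get $(K_X+\Delta)\cdot C\ge -2\dim X$, and once to the g-pair $(X,\Delta+(1-\varepsilon)M)$, and then subtracting.
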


\begin{proof}
	By \cite[Proposition 3.16]{HanLi} there exist $\Q$-divisors $\Delta_1,\dots,\Delta_m$ and $M_1,\dots,M_m$ such that each g-pair $(X/Z,\Delta_i+M_i)$ is log canonical with
	$ K_X + \Delta_i + M_i $ nef over $Z$, and there exist positive real numbers $\alpha_1,\dots,\alpha_m$ such that $\sum \alpha_i=1$ and
	\begin{equation}\label{eq:35}
	K_X+\Delta+M=\sum\limits_{i=1}^m \alpha_i(K_X+\Delta_i+M_i).
	\end{equation}
	Fix a positive integer $ r $ such that $ r(K_X+\Delta_i+M_i) $ is Cartier for each $i \in \{1,\dots,m\}$. Consider the set 
	$$\mathcal S=\big\{\textstyle\sum\alpha_i n_i>0\mid n_i\in\Z\text{ and } n_i\geq0\big\}.$$ 
	Then clearly there exists $\beta>0$ such that $s>\beta$ for all $s\in\mathcal S$. Set 
	$$ \varepsilon_0 := \frac{\beta}{\beta+2r\dim X} $$
	and fix any $ 0<\varepsilon <\varepsilon_0 $. 
	
	By \cite[Lemma 3.5]{HanLi} we may run a 
	$ \big(K_X + \Delta+(1-\varepsilon) M\big) $-MMP with scaling of an ample divisor over $Z$. It suffices to show that this MMP is $ (K_X + \Delta_i+M_i) $-trivial at the first step for each $i \in \{1,\dots,m\}$, since then the strict transform of $K_X+\Delta_i+M_i$ stays nef and $ r(K_X+\Delta_i+M_i) $ stays Cartier along the MMP due to \cite[Theorem 3.25(4)]{KM98}. 
	
	Let $ R $ be a $ \big(K_X +\Delta+ (1-\varepsilon) M\big) $-negative extremal ray over $Z$. Since $(K_X+\Delta+M)\cdot R\geq0$ by assumption, we infer 
	$$(K_X+\Delta)\cdot R<0\quad\text{and}\quad M\cdot R>0.$$
	By the boundedness of extremal rays \cite[Proposition 3.13]{HanLi} we may find a curve $ C $ on $ X $ whose class is in $ R $ such that
	\begin{equation}\label{eq:45}
	-2 \dim X\leq (K_X+\Delta)\cdot C<0
	\end{equation}
	and
	\begin{equation}\label{eq:46}
	-2 \dim X \leq \big( K_X + \Delta+(1-\varepsilon) M\big) \cdot C < 0 .	
	\end{equation}
	From \eqref{eq:45} and \eqref{eq:46} we obtain $(1-\varepsilon)M\cdot C\leq 2\dim X$. Therefore
	$$0 < M \cdot C \leq \frac{2 \dim X}{1-\varepsilon} .$$
	This implies
	$$(K_X + \Delta +M )\cdot C = \big( K_X + \Delta+(1-\varepsilon) M\big) \cdot C + \varepsilon M \cdot C < \frac{2\varepsilon \dim X}{1-\varepsilon}< \frac{\beta}{r}. $$
	If $(K_X + \Delta +M )\cdot C>0$, then $r(K_X + \Delta +M )\cdot C \in\mathcal S$ by \eqref{eq:35}, a contradiction to the choice of $\beta$. Hence 
	$( K_X + \Delta+M ) \cdot C=0$, and by \eqref{eq:35} we obtain $ (K_X+\Delta_i+M_i) \cdot C = 0 $, which finishes the proof.
\end{proof}

\subsection{On termination with scaling}
\label{sec:termination}

We need a couple of results on the termination of some special MMPs. 

In \cite{HanLi}, Han and Li show that for $\Q$-factorial NQC dlt g-pairs, the
existence of NQC weak Zariski decompositions is equivalent to the existence
of minimal models. We need a slightly reformulated version of their main
result: the difference is that we assume only that \emph{the given pair} has an NQC weak Zariski decomposition, and not every such pair. Up to a reorganisation of the proof, the following is \cite[Theorem 1.7]{HanLi}.

\begin{thm}\label{thm:HanLi}
	Assume the existence of NQC weak Zariski decompositions for NQC $\Q$-factorial dlt g-pairs of dimensions at most $n-1$. 
	
	Let $ (X/Z,\Delta+M) $ be an NQC log canonical g-pair of dimension $ n $ which has an NQC weak Zariski decomposition over $Z$. Then the following statements hold.
\begin{enumerate}
\item[(i)] The g-pair $ (X,\Delta+M) $ has a minimal model in the sense of Birkar-Shokurov over $Z$.
\item[(ii)] If additionally $X$ is $ \Q $-factorial and $(X,0)$ is klt, then any $(K_X + \Delta +M)$-MMP with scaling of an ample divisor over $Z$ terminates.
\end{enumerate}	
\end{thm}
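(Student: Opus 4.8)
The plan is to revisit the proof of \cite[Theorem 1.7]{HanLi} and to reorganise its induction, so that in dimension $n$ an NQC weak Zariski decomposition is invoked only for the given g-pair $(X,\Delta+M)$ and for the g-pairs produced from it by the steps of an MMP — all of which inherit such a decomposition by Lemma \ref{lem:updownWZD} and Remarks \ref{rem:biratWZD} and \ref{rem:addM} — while the hypothesis is used ``for all pairs'' only in dimensions $\le n-1$. First I would reduce to the case where $X$ is $\Q$-factorial and $(X,0)$ is klt: for part (i) one passes to a dlt blowup of $(X,\Delta+M)$ via Lemma \ref{lem:dltblowup}, which is again an NQC g-pair admitting an NQC weak Zariski decomposition over $Z$ by Remark \ref{rem:biratWZD}, whose underlying variety is $\Q$-factorial and klt by Lemma \ref{lem:dltblowup} and \cite[Remark 2.3]{HanLi}, and any minimal model in the sense of Birkar-Shokurov of which is also one of $(X/Z,\Delta+M)$ since the blowup is crepant (cf.\ the proof of Lemma \ref{lem:MMcomparison}). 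Under these hypotheses $(K_X+\Delta+M)$-MMPs with scaling of an ample divisor $A$ over $Z$ exist by \cite[Lemma 3.5]{HanLi}, and both parts reduce to showing that any such MMP terminates, its output then being a minimal model in the sense of Birkar-Shokurov of $(X/Z,\Delta+M)$.

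Next I would feed in the lower-dimensional case. Applying the theorem inductively in dimensions $\le n-1$ — its hypothesis holding there because the present assumption provides NQC weak Zariski decompositions for \emph{all} NQC $\Q$-factorial dlt g-pairs of dimension $\le n-1$ — one obtains that every NQC log canonical g-pair of dimension $\le n-1$ admitting such a decomposition has a minimal model in the sense of Birkar-Shokurov, and that MMP with scaling of an ample divisor terminates for the $\Q$-factorial ones with klt underlying variety. This is precisely the input for the special termination argument (after Fujino and Birkar, as used in \cite{HanLi}): for an $n$-dimensional $\Q$-factorial dlt g-pair, any sequence of flips with scaling eventually induces an isomorphism near the non-klt locus, because the MMPs it induces on the lc centres — of dimension $\le n-1$, and admitting weak Zariski decompositions by hypothesis — terminate; the same input handles the finitely many divisorial contractions occurring in the MMP.

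Finally, running a $(K_X+\Delta+M)$-MMP with scaling of $A$ over $Z$ yields g-pairs $(X_i/Z,\Delta_i+M_i)$ with scaling constants $\lambda_i\searrow\lambda\ge0$. If $\lambda>0$, the MMP terminates by \cite[Corollary 1.4.2]{BCHM}, exactly as in the proof of Lemma \ref{lem:easytermination}. If $\lambda=0$, then special termination allows one to assume that after finitely many steps the MMP consists only of flips away from the non-klt locus, so that each $(X_i,\Delta_i+M_i)$ is klt and — crucially — still admits an NQC weak Zariski decomposition over $Z$ by Lemma \ref{lem:updownWZD}; one then runs Birkar's argument as adapted in \cite{HanLi}, which, using a weak Zariski decomposition $g_i^*(K_{X_i}+\Delta_i+M_i)\equiv_Z P_i+N_i$ and the boundedness of the lengths of extremal rays \cite[Proposition 3.13]{HanLi}, tracks the negative parts $N_i$ along the flips, shows that their components are successively contracted, and derives a contradiction with $\lambda=0$. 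I expect this last step — extracting a contradiction from an infinite sequence of flips with vanishing scaling, while keeping precise track of where the lower-dimensional hypothesis enters — to be the main obstacle; the only genuinely new point compared with \cite{HanLi} is the bookkeeping which verifies that no step of the $n$-dimensional argument ever requires a weak Zariski decomposition for a g-pair not birational to $(X,\Delta+M)$, so that Lemma \ref{lem:updownWZD} and Remarks \ref{rem:biratWZD} and \ref{rem:addM} always apply.
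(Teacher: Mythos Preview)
Your reductions at the start are fine, but the final step has a genuine gap. You write that after special termination ``each $(X_i,\Delta_i+M_i)$ is klt''; this is false. Special termination only guarantees that after finitely many steps the flipping loci are disjoint from the non-klt locus --- it does not erase that locus. The components of $\lfloor\Delta_i\rfloor$ persist (flips contract no divisors), so the pairs remain strictly dlt in general. Your subsequent sketch then claims that Birkar's argument ``tracks the negative parts $N_i$ along the flips'' and ``shows that their components are successively contracted''; but in a sequence of flips no divisor is contracted, so this description does not match any argument in \cite{HanLi}, and the endgame you need does not proceed by following a single MMP.

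The paper organises the proof differently. For (ii), one first establishes that $(X,\Delta+M)$ has a minimal model in the sense of Birkar-Shokurov; termination of any MMP with scaling then follows from the proof of \cite[Theorem 1.7]{HanLi}, which compares a given MMP to a known minimal model. For the existence statement one argues by contradiction on the class $\mathcal S$ of all $n$-dimensional NQC log canonical g-pairs whose underlying variety is $\Q$-factorial klt, which \emph{have} an NQC weak Zariski decomposition but \emph{do not have} a minimal model in the sense of Birkar-Shokurov. One attaches to each element of $\mathcal S$ the $\theta$-invariant of \cite[Definition 5.2]{HanLi} --- the number of components of the pushforward of the negative part $N$ not lying in $\lfloor\Delta\rfloor$ --- and picks an element of $\mathcal S$ (which may be taken log smooth, as in Step 1 of the proof of \cite[Theorem 5.4]{HanLi}) minimising $\theta$. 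Steps 2 and 3 of that proof then yield contradictions in the cases $\theta=0$ and $\theta>0$ respectively. The essential organisational point --- and the whole content of the reformulation --- is that $\mathcal S$ by definition contains only pairs \emph{already possessing} a weak Zariski decomposition, so the argument never needs to manufacture one for an arbitrary $n$-dimensional g-pair; the lower-dimensional hypothesis is invoked only inside Steps 2 and 3. Your bookkeeping idea via Lemma \ref{lem:updownWZD} is in the right spirit, but it is the class $\mathcal S$ and the $\theta$-minimisation, not a linear tracking along one MMP, that make the argument go through.
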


\begin{proof}
	In this sketch of the proof, we follow closely the proof of \cite[Theorem 1.7]{HanLi}, and only indicate how that proof can be reorganised in order to obtain the statement above.
	
	We first show (ii). By the proof of \cite[Theorem 1.7]{HanLi}, to prove (ii) it suffices to show that $(X,\Delta+M)$ has a minimal model in the sense of Birkar-Shokurov over $ Z $. 
	
	Let $\mathcal S$ be the set of all NQC log canonical g-pairs of dimension $n$ whose underlying variety is $ \Q $-factorial klt and which have an NQC weak Zariski decomposition but do not have a minimal model in the sense of Birkar-Shokurov. If $\mathcal{S} = \emptyset$, then we are done.
	
	Otherwise, to each element $(Y/Z,\Delta_Y+M_Y)\in\mathcal S$ with an NQC weak Zariski decomposition $ g_Y^* (K_Y+\Delta_Y+M_Y) \equiv_Z P_Y+N_Y$, where $g_Y\colon Y'\to Y$ is a birational model, we can associate a quantity $\theta(Y/Z,\Delta_Y+M_Y,N_Y)$ as in \cite[Definition 5.2]{HanLi}; for convenience, we recall that this is the number of components of $ (g_Y)_* N_Y $ which are not components of $ \lfloor \Delta_Y \rfloor $. Next, as in Step 1 of the proof of \cite[Theorem 5.4]{HanLi}, we may assume that we have chosen an element $(Y/Z,\Delta_Y+M_Y)\in\mathcal S$ which is an NQC log smooth g-pair with an NQC weak Zariski decomposition $K_Y+\Delta_Y+M_Y\equiv_Z P_Y+N_Y$ that minimises this non-negative integer $ \theta $ in the set $\mathcal S$. Now, we distinguish two cases: if $\theta(Y/Z,\Delta_Y+M_Y,N_Y)=0$, then a contradiction follows by Step 2 of the proof of \cite[Theorem 5.4]{HanLi}, while if $\theta(Y/Z,\Delta_Y+M_Y,N_Y)>0$, then a contradiction follows by Step 3 of the proof of \cite[Theorem 5.4]{HanLi}. Hence $\mathcal{S} = \emptyset$, as desired, and this concludes the proof of (ii).
	
	For (i), as in Step 1 of the proof of \cite[Theorem 5.4]{HanLi} we may assume that $(X,\Delta+M)$ is log smooth, and then the result follows from (ii).
\end{proof}

Combining the above with one of the main results of \cite{HH19}, we obtain:

\begin{thm}\label{thm:HH_gMM}
	Assume the existence of NQC weak Zariski decompositions for $ \Q $-factorial NQC dlt g-pairs of dimensions at most $n-1$. 

Let $ (X/Z,\Delta) $ be a log canonical pair of dimension $ n $. If $ (X,\Delta) $ has an NQC weak Zariski decomposition over $Z$, then there exists a $(K_X + \Delta)$-MMP with scaling of an ample divisor over $Z$ which terminates. 
\end{thm}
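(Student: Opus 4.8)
The plan is to reduce the statement for the log canonical pair $(X/Z,\Delta)$ to the g-pair setting so that Theorem \ref{thm:HanLi}(ii) applies, and then to transfer the resulting termination back to $X$. First I would take a dlt blowup $\pi\colon(\widehat X,\widehat\Delta)\to(X,\Delta)$ via Lemma \ref{lem:dltblowup}, so that $\widehat X$ is $\Q$-factorial, $(\widehat X,\widehat\Delta)$ is dlt, and $K_{\widehat X}+\widehat\Delta\sim_\R\pi^*(K_X+\Delta)$. By Remark \ref{rem:biratWZD}, the assumption that $(X,\Delta)$ has an NQC weak Zariski decomposition over $Z$ passes to $(\widehat X,\widehat\Delta)$. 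Since $(\widehat X,\widehat\Delta)$ is dlt and $\Q$-factorial, the pair $(\widehat X,0)$ is klt, so we are in a position to apply Theorem \ref{thm:HanLi}(ii) to conclude that any $(K_{\widehat X}+\widehat\Delta)$-MMP with scaling of an ample divisor over $Z$ terminates.

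Next I would run such an MMP on $\widehat X$: pick an ample (over $Z$) divisor $\widehat A$ on $\widehat X$ making $(\widehat X,\widehat\Delta+\widehat A)$ dlt with $K_{\widehat X}+\widehat\Delta+\widehat A$ nef over $Z$, which exists since $\widehat A$ can be chosen general, and by \cite[Lemma 3.5]{HanLi} (applied in the degenerate g-pair case $M=0$) we may run the $(K_{\widehat X}+\widehat\Delta)$-MMP with scaling of $\widehat A$ over $Z$. By Theorem \ref{thm:HanLi}(ii) this MMP terminates, yielding a minimal model $(\widehat Y/Z,\widehat\Delta_{\widehat Y})$ over $Z$ with $K_{\widehat Y}+\widehat\Delta_{\widehat Y}$ nef over $Z$; since each step is $(K_{\widehat X}+\widehat\Delta)$-negative and $\widehat X$ is $\Q$-factorial, $\widehat Y$ is $\Q$-factorial as well, and $(\widehat Y/Z,\widehat\Delta_{\widehat Y})$ is a minimal model of $(X/Z,\Delta)$ in the sense of Birkar-Shokurov. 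Then Lemma \ref{lem:MMcomparison}(i) (or directly \cite[Theorem 1.7]{HH19}) gives a minimal model $(Y/Z,\Delta_Y)$ of $(X/Z,\Delta)$ in the usual sense, with $\varphi\colon X\dashrightarrow Y$ a birational contraction and $K_Y+\Delta_Y$ nef over $Z$.

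The remaining and somewhat delicate point is to produce a $(K_X+\Delta)$-MMP with scaling of an ample divisor \emph{on $X$ itself} that terminates, rather than merely on the dlt blowup. Here I would use the existence of the minimal model $(Y,\Delta_Y)$ just obtained to guide the MMP on $X$: choosing an ample divisor $A$ on $X$ over $Z$ with $(X,\Delta+A)$ log canonical and $K_X+\Delta+A$ nef over $Z$, one runs the $(K_X+\Delta)$-MMP with scaling of $A$; the existence of a minimal model of $(X/Z,\Delta+tA)$ for every $t$ in the relevant range — which follows because $(Y,\Delta_Y)$ together with $\varphi_*A$ provides one after possibly running a further $(K_Y+\Delta_Y+t\varphi_*A)$-MMP over $Z$, a finite process since $\varphi_*A$ is big over $Z$ and one may invoke \cite[Corollary 1.4.2]{BCHM}-type termination for the klt pair obtained by perturbation — ensures each scaling step exists and the scaling constants are nonincreasing, and a standard special termination / length-of-extremal-rays argument forces termination in finitely many steps. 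The main obstacle I expect is precisely this last transfer: arranging that running the MMP on $X$ (which may not have the convenient klt-ness of $(\widehat X,0)$, since $X$ is only log canonical) terminates, and that the relevant intermediate pairs $(X,\Delta+tA)$ indeed have minimal models over $Z$ so that the abstract termination criterion can be applied. This is where one genuinely needs the full strength of the minimal model just constructed on the dlt blowup, combined with the comparison Lemma \ref{lem:MMcomparison}(i), rather than Theorem \ref{thm:HanLi} alone.
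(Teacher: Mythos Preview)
Your first two paragraphs are essentially the paper's argument, though slightly roundabout: rather than passing to a dlt blowup and invoking Theorem \ref{thm:HanLi}(ii), the paper applies Theorem \ref{thm:HanLi}(i) directly to the log canonical pair $(X/Z,\Delta)$ to obtain a minimal model in the sense of Birkar-Shokurov. Your route via the dlt blowup and part (ii) is fine and reaches the same conclusion.

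The divergence is in your last paragraph. You correctly identify the transfer of termination back to $X$ as the delicate point, and you sketch an ad hoc argument involving minimal models of $(X,\Delta+tA)$ for all $t$, perturbations to klt, and special termination; you yourself flag this as incomplete and as the main obstacle. The paper bypasses this entirely: \cite[Theorem 1.7]{HH19} is not merely the statement that a Birkar--Shokurov minimal model yields a minimal model in the usual sense (which is how you use it), but the stronger statement that once $(X/Z,\Delta)$ has a minimal model in the sense of Birkar--Shokurov, there exists a $(K_X+\Delta)$-MMP with scaling of an ample divisor over $Z$ that terminates. So the paper's proof is literally two lines: Theorem \ref{thm:HanLi}(i) gives the Birkar--Shokurov minimal model, and \cite[Theorem 1.7]{HH19} gives the terminating MMP on $X$. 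Your last paragraph is an attempt to reprove that theorem of Hashizume--Hu; once you invoke it in full strength, everything after your second paragraph becomes unnecessary.
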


\begin{proof}
By \cite[Theorem 1.7]{HH19} it suffices to show that the pair $(X,\Delta)$ has a minimal model in the sense of Birkar-Shokurov over $Z$. We conclude by Theorem \ref{thm:HanLi}(i).
\end{proof}

The next result is a special case of \cite[Lemma 4.3]{HanLiu}, and builds on \cite[Proposition 8.7]{DHP13}, \cite[Lemma 3.1]{Gon15} and \cite[Theorem 3.3]{DL15}. We include a detailed proof for the benefit of the reader and we follow the presentation in \cite{DL15}.

\begin{lem}\label{lem:Gongyo}
	Let $(X/Z,\Delta+M)$ be a pseudoeffective NQC $\Q$-factorial dlt g-pair of dimension $n$ such that for each $\varepsilon>0$ the divisor $K_X+\Delta+(1-\varepsilon)M$ is not pseudoeffective over $Z$. Then there exists a birational contraction $\varphi\colon X\dashrightarrow X'$ over $ Z $ and a fibration $f\colon X'\to Y$ over $ Z $ such that, if we set $\Delta':=\varphi_*\Delta$ and $M':=\varphi_*M$, then:
	\begin{enumerate}
		\item[(i)] $(X',\Delta'+M')$ is a $\Q$-factorial log canonical g-pair, 
		\item[(ii)] $K_{X'}+\Delta'+M'\sim_{\R,Y}0$, 
		\item[(iii)] $\varphi$ is a $\big(K_X+\Delta+(1-\varepsilon)M\big)$-MMP for some $0<\varepsilon\ll1$ and $f$ is the corresponding Mori fibre space.
	\end{enumerate}
\end{lem}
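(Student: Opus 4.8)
The plan is to perturb the nef part of the g-pair, run a minimal model program on the perturbed divisor — which must terminate with a Mori fibre space precisely because the perturbed divisor is not pseudoeffective — and then read off (i)--(iii) from that Mori fibre space; this follows the strategy of \cite[Proposition 8.7]{DHP13}, \cite[Lemma 3.1]{Gon15} and \cite[Theorem 3.3]{DL15}. First I would note that for every $\varepsilon\in(0,1)$ the g-pair $(X/Z,\Delta+(1-\varepsilon)M)$ is again a $\Q$-factorial NQC dlt g-pair with $(X,0)$ klt: the boundary $\Delta$ and the open subset over which the pair is log smooth do not change, the divisor $(1-\varepsilon)M$ is still NQC, and writing $f^*M=M'+E_0$ with $E_0\geq0$ $f$-exceptional by the Negativity lemma \cite[Lemma 3.39]{KM98}, one has $a(E,X,\Delta+(1-\varepsilon)M)=a(E,X,\Delta+M)+\varepsilon\,\mult_E E_0\geq a(E,X,\Delta+M)$ for every divisorial valuation $E$, so log canonicity and the dlt condition are preserved. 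Hence by \cite[Lemma 3.5]{HanLi} we may run $\big(K_X+\Delta+(1-\varepsilon)M\big)$-MMPs with scaling of an ample divisor over $Z$, and by hypothesis $K_X+\Delta+(1-\varepsilon)M$ is not pseudoeffective over $Z$.

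Fix $0<\varepsilon\ll1$ and an ample divisor $A$ over $Z$, and run a $\big(K_X+\Delta+(1-\varepsilon)M\big)$-MMP with scaling of $A$ over $Z$, with scaling numbers $\lambda_i\searrow\lambda$. Arguing exactly as in the proof of Lemma \ref{lem:easytermination}, this is also a $\big(K_X+\Delta+(1-\varepsilon)M+\tfrac{\lambda}{2}A\big)$-MMP over $Z$: if $\lambda>0$, then by \cite[Lemma 3.5]{HanLi} the latter divisor is $\R$-linearly equivalent over $Z$ to $K_X+B$ for a klt pair $(X,B)$ with $B$ big over $Z$, so the MMP terminates by \cite[Corollary 1.4.2]{BCHM}; and if $\lambda=0$ and the MMP did not terminate, then, as in the case $\nu=0$ of that proof, the strict transform of $K_X+\Delta+(1-\varepsilon)M$ would become movable over $Z$, hence pseudoeffective over $Z$, contradicting the hypothesis. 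So the MMP terminates, and since $K_X+\Delta+(1-\varepsilon)M$ is not pseudoeffective over $Z$ it cannot end with a minimal model; therefore it produces a birational contraction $\varphi\colon X\dashrightarrow X'$ over $Z$ and a Mori fibre space $f\colon X'\to Y$ over $Z$ with $-\big(K_{X'}+\Delta'+(1-\varepsilon)M'\big)$ ample over $Y$, where $\Delta'=\varphi_*\Delta$ and $M'=\varphi_*M$. Since $\varphi$ is an MMP, $X'$ is $\Q$-factorial and, provided $\varepsilon$ is small enough that $\varphi$ is moreover $(K_X+\Delta+M)$-non-positive, the g-pair $(X',\Delta'+M')$ is log canonical; this gives (i), and (iii) is immediate.

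For (ii), observe that since $\varphi$ is a birational contraction, $K_{X'}+\Delta'+M'=\varphi_*(K_X+\Delta+M)$ is pseudoeffective over $Z$, so its restriction to a very general fibre $F$ of $f$ is pseudoeffective. As $\rho(X'/Y)=1$, every $\R$-Cartier divisor on $X'$ restricts on $F$ to a real multiple of the ample class $H:=-\big(K_{X'}+\Delta'+(1-\varepsilon)M'\big)|_F$; writing $\big(K_{X'}+\Delta'+M'\big)|_F\equiv\tau H$ we get $\tau\geq0$ and $M'|_F\equiv\tfrac{\tau+1}{\varepsilon}H$. It remains to show $\tau=0$. By the boundedness of extremal rays \cite[Proposition 3.13]{HanLi}, as in the proof of Lemma \ref{lem:trivMMPg}, we may choose a curve $C$ generating the extremal ray contracted by $f$ with $-2\dim X\leq\big(K_{X'}+\Delta'\big)\cdot C$; together with $\big(K_{X'}+\Delta'+(1-\varepsilon)M'\big)\cdot C<0$ this forces $M'\cdot C\leq\tfrac{2\dim X}{1-\varepsilon}$ and hence
$$ 0\ \leq\ \big(K_{X'}+\Delta'+M'\big)\cdot C\ =\ \big(K_{X'}+\Delta'+(1-\varepsilon)M'\big)\cdot C+\varepsilon\,M'\cdot C\ <\ \frac{2\varepsilon\dim X}{1-\varepsilon}. $$
The point is to upgrade this estimate, by a suitable choice of $\varepsilon$, to $\big(K_{X'}+\Delta'+M'\big)\cdot C=0$, i.e.\ $\tau=0$; here I would argue as in \cite{DHP13,DL15}: for instance, if $K_{X'}+\Delta'+M'$ were ample over $Y$, then the same MMP would remain a $\big(K_X+\Delta+(1-\varepsilon')M\big)$-MMP terminating at $f$ for all $\varepsilon'$ slightly smaller than $\varepsilon$, while letting $\varepsilon'$ decrease further would eventually make the strict transform of $K_X+\Delta+(1-\varepsilon')M$ nef, hence pseudoeffective, over $Z$, contradicting the hypothesis. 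Granting $\big(K_{X'}+\Delta'+M'\big)|_F\equiv0$, Lemma \ref{lem:numtriv} gives $K_{X'}+\Delta'+M'\equiv_U0$ over some open $U\subseteq Y$, and then the argument of the proof of Corollary \ref{cor:relativeMMPcontracting} — a very general fibre of $f$ is of Fano type, hence has rational singularities and $h^1(F,\OO_F)=0$, so Lemma \ref{lem:Grauert} and \cite[Lemma 3.2.1]{BCHM} apply — upgrades this to $K_{X'}+\Delta'+M'\sim_{\R,Y}0$, which is (ii).

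The main obstacle is the choice of $\varepsilon$ in the previous paragraph: a single $\varepsilon$ must be small enough for the numerical estimates above (and to ensure that $\varphi$ is $(K_X+\Delta+M)$-non-positive), and at the same time be positioned so that $K_{X'}+\Delta'+M'$ is exactly trivial — not merely ample of small degree — over the Mori fibre space, even though the whole MMP, and hence $X'$, $Y$, $\Delta'$ and $M'$, depend on $\varepsilon$. Disentangling this circularity is the technical heart of the argument, and it is precisely here that one uses that $K_X+\Delta+M$ is pseudoeffective over $Z$ while $K_X+\Delta+(1-\varepsilon)M$ is not for any $\varepsilon>0$.
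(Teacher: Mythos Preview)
You have correctly identified the overall shape of the argument and, crucially, you have also located the genuine difficulty: the circular dependence between the choice of $\varepsilon$ and the output $(\varphi,f)$ of the MMP. However, your proposal does not actually resolve this circularity, and the heuristic you offer for forcing $\tau=0$ does not work. Even if the same sequence of contractions $\varphi$ remains a $\big(K_X+\Delta+(1-\varepsilon')M\big)$-MMP for $\varepsilon'$ slightly below $\varepsilon$, as $\varepsilon'$ decreases further this eventually fails, and in any case making $K_{X'}+\Delta'+(1-\varepsilon')M'$ nef over $Y$ says nothing about pseudoeffectivity over $Z$. So no contradiction to the hypothesis is obtained along these lines, and you are left without a mechanism to pin down an $\varepsilon$ for which $\big(K_{X'}+\Delta'+M'\big)\cdot C=0$.

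The paper breaks this circularity in a different way: it does not try to pick a single good $\varepsilon$, but instead runs the $\big(K_X+\Delta+(1-\varepsilon_i)M\big)$-MMP for an entire sequence $\varepsilon_i\searrow 0$, producing Mori fibre spaces $g_i\colon X_i\to Y_i$. On each $X_i$ one finds, exactly via the extremal-ray computation you set up, a number $\eta_i\in(1-\varepsilon_i,1]$ with $K_{X_i}+\Delta_i+\eta_iM_i\equiv_{Y_i}0$. The point is then to invoke the ACC results of \cite{BZ16}: first \cite[Theorem~1.5]{BZ16} forces the very general fibres $(F_i,\Delta_{F_i}+M_{F_i})$ to be log canonical for $i\gg0$ (otherwise the generalised log canonical thresholds $\tau_i$ would form a strictly increasing sequence), and then the global ACC \cite[Theorem~1.6]{BZ16} forces the sequence $\{\eta_i\}$ to be eventually constant; since $\eta_i\to1$, this constant is $1$. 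Any $f_i$ with $i\gg0$ then does the job. This use of ACC is precisely the ``technical heart'' you allude to, and it is missing from your proposal.

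Two minor remarks. First, your argument that $\varphi$ is $(K_X+\Delta+M)$-non-positive ``provided $\varepsilon$ is small enough'' is part of the same circularity, since $\varphi$ itself depends on $\varepsilon$; the paper instead obtains log canonicity of the fibre pair via the ACC argument above. Second, once $K_{X'}+\Delta'+M'\equiv_Y0$ is established, the upgrade to $\sim_{\R,Y}0$ is immediate because $f$ is an extremal contraction \cite[Theorem~3.25(4)]{KM98}; the route via Lemmas~\ref{lem:numtriv} and~\ref{lem:Grauert} and \cite[Lemma~3.2.1]{BCHM} is unnecessary here.
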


\begin{proof}
	Fix an effective divisor $A$ on $X$ which is ample over $Z$. Let $\{ \varepsilon_i \} $ be a decreasing sequence of positive real numbers such that $\lim\limits_{i\to\infty}\varepsilon_i=0$. Set 
	$$y_i=\inf\big\{t\in\R_{\geq0}\mid K_X+\Delta+(1-\varepsilon_i)M+tA\text{ is pseudoeffective over }Z\big\}$$
	and note that $y_i>0$ for all $i$.
		
	Fix $i$. By \cite[Lemma 3.5]{HanLi} we may run a $\big(K_X+\Delta+(1-\varepsilon_i)M\big)$-MMP with scaling of $A$ over $Z$. Note that this MMP is also an MMP on $K_X +\Delta + (1-\varepsilon_i)M + \nu A$ for some $0<\nu< y_i$. By \cite[Lemma 3.5]{HanLi} there exists a boundary divisor $B$ such that $K_X + \Delta + (1-\varepsilon_i)M + \nu A \sim_{\R,Z} K_X+B$ and the pair $(X,B)$ is klt. Therefore this MMP is clearly also a $(K_X+B)$-MMP with scaling of $ A $ over $ Z $, which terminates with a Mori fibre space $g_i\colon X_i\to Y_i$ over $ Z $ by \cite[Corollary 1.3.3]{BCHM}. Let $f_i\colon X\dashrightarrow X_i$ be the resulting birational contraction over $ Z $ and denote by $\Delta_i$, $M_i$ and $A_i$ the strict transforms of $\Delta$, $M$ and $A$ on $X_i$. 
%	
%	By construction of the MMP with scaling, there exists $ \lambda_i\in\R $ such that 
%	$$K_{X_i}+\Delta_i+(1-\varepsilon_i)M_i+\lambda_i A_i\equiv_{Y_i}0. $$
%	We claim that $ \lambda_i = y_i $. Indeed, first note that the MMP above is actually an MMP for \emph{any} $0\leq\xi< y_i$, since the pseudoeffective threshold cannot drop at any step of this MMP. This implies $\lambda_i\geq y_i$. If $ \lambda_i > y_i $, then, as
%	$ K_{X_i}+\Delta_i+(1-\varepsilon_i)M_i+y_i A_i $ is pseudoeffective over $ Z $ and $ A_i $ is big over $ Z $ by construction, the divisor
%	$ K_{X_i}+\Delta_i+(1-\varepsilon_i)M_i+\lambda_i A_i $ would also be big over 
%	$ Z $, and thus $ g_i $ would be a birational map, a contradiction. Hence,
%	$$K_{X_i}+\Delta_i+(1-\varepsilon_i)M_i+y_i A_i\equiv_{Y_i}0. $$
%	
	Since $ K_{X_i} + \Delta_i + M_i $ is pseudoeffective over $ Z $, there exist effective $\R$-divisors $E_j$ on $X_i$ such that $\lim\limits_{j\to\infty}[E_j]=[K_{X_i} + \Delta_i + M_i]$ in $N^1(X_i/Z)_\R$. Let $C$ be a curve on $X_i$ which does not belong to $ \bigcup\Supp E_j $ and is contracted by $g_i$. Then
	$$\big(K_{X_i} + \Delta_i + (1-\varepsilon_i)M_i\big) \cdot C < 0$$
	by the definition of the MMP, and 
	$$(K_{X_i} + \Delta_i + M_i) \cdot C \geq 0 $$
	%\quad \text{and} \quad \big(K_{X_i} + \Delta_i + (1-\varepsilon_i)M_i + y_i A_i\big) \cdot C = 0,$$
	by the choice of $C$. Thus, there exists $\eta_i\in(1-\varepsilon_i,1]$ such that $(K_{X_i}+\Delta_i+\eta_iM_i)\cdot C=0$, and hence 
	$$K_{X_i}+\Delta_i+\eta_iM_i\equiv_{Y_i}0 ,$$
	since all contracted curves are numerically proportional. In particular, if $F_i$ is a very general fibre of $g_i$ and if we set $\Delta_{F_i}=\Delta_i|_{F_i}$ and $M_{F_i}=M_i|_{F_i}$, then 
	$$K_{F_i}+\Delta_{F_i}+\eta_iM_{F_i}\equiv 0.$$
	
	For every $ i $, set 
	$$\tau_i=\max\big\{t\in\R_{\geq 0}\mid (F_i,\Delta_{F_i}+tM_{F_i})\text{ is log canonical}\big\}$$
	and note that $1-\varepsilon_i \leq \tau_i$ since each g-pair  $(F_i,\Delta_{F_i}+(1-\varepsilon_i)M_{F_i})$ is log canonical. If $(F_i,\Delta_{F_i}+M_{F_i})$ is not log canonical for infinitely many $i$, then after passing to a subsequence we can assume that $\tau_i < 1$ for all $i$, and since $1-\varepsilon_i \leq \tau_i$ and $\lim\limits_{i\to\infty} (1-\varepsilon_i) = 1$, we can assume that the sequence $\{\tau_i\}$ is strictly increasing, which contradicts \cite[Theorem 1.5]{BZ16}. Therefore $(F_i,\Delta_{F_i}+M_{F_i})$ is log canonical for $i\gg0$. It follows by \cite[Theorem 1.6]{BZ16} that the sequence $\{\eta_i\}$ is eventually constant, and thus $\eta_i=1$ for $i\gg0$. Hence, we take $\varphi$ to be any of the $f_i$ for $i\gg0$, and we set $X':=X_i$ and $Y:=Y_i$.
\end{proof}

\section{On weak Zariski decompositions}	\label{sec:main}

In this section we prove the technical core of the paper. The first two results show that, modulo a suitable assumption on g-pairs in lower dimensions, we may sometimes deduce the existence of weak Zariski decompositions. The proof of Theorem \ref{thm:HM} follows the same strategy as that of \cite[Theorem 2]{HM18}, with additional complications due to the fact that we are working with divisors with real coefficients. The proof of Theorem \ref{thm:maintechnical} follows similar ideas in the context of usual pairs. The main technical input for both proofs comes from \cite{DHP13,Gon15,DL15}. The next two results are important consequences of the aforementioned theorems and allow one to reduce the inductive assumption on g-pairs to a statement about usual pairs. 
%In particular, 
At the end of this section we prove Theorem \ref{thm:HMref}.
%, which refines \cite[Corollary 2]{HM18} and plays a crucial role in this paper.

\begin{thm}\label{thm:maintechnical}
	Assume the existence of NQC weak Zariski decompositions for NQC log canonical g-pairs of dimensions at most $n-1$. 
	
	Let $(X/Z,\Delta)$ be a pseudoeffective $\Q$-factorial dlt pair of dimension $n$ such that for each $\varepsilon>0$ the divisor $K_X+(1-\varepsilon)\Delta$ is not pseudoeffective over $Z$. Then $(X,\Delta)$ has an NQC weak Zariski decomposition over $ Z $. 
\end{thm}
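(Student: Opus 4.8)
The plan is to follow the strategy of the proof of Lemma~\ref{lem:Gongyo}, with the boundary $\Delta$ itself playing the role of the nef part $M$ there: the hypothesis lets us run an MMP reaching a Mori fibre space on which $K+\Delta$ becomes relatively trivial, and then we descend to a lower-dimensional generalised pair and invoke the assumption in lower dimensions.

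First I would fix a divisor $A$ ample over $Z$ and a strictly decreasing sequence $\varepsilon_i\searrow 0$. For each $i$, since $K_X+(1-\varepsilon_i)\Delta$ is not pseudoeffective over $Z$, the number $y_i=\inf\{t\ge 0\mid K_X+(1-\varepsilon_i)\Delta+tA\text{ is pseudoeffective over }Z\}$ is positive, and a $\big(K_X+(1-\varepsilon_i)\Delta\big)$-MMP with scaling of $A$ over $Z$ is, for a suitable $0<\nu_i<y_i$, also an MMP for $K_X+(1-\varepsilon_i)\Delta+\nu_i A\sim_{\R,Z}K_X+B_i$ with $(X,B_i)$ klt and $B_i$ big but not pseudoeffective over $Z$; by \cite[Corollary 1.3.3]{BCHM} it terminates with a Mori fibre space $g_i\colon X_i\to Y_i$ over $Z$. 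Let $f_i\colon X\dashrightarrow X_i$ be the resulting birational contraction and $\Delta_i=(f_i)_*\Delta$; note $\dim Y_i\le n-1$. The divisor $K_{X_i}+\Delta_i$, being the pushforward of $K_X+\Delta$, is pseudoeffective over $Z$, so exactly as in the proof of Lemma~\ref{lem:Gongyo} one finds a $g_i$-contracted curve $C$ with $(K_{X_i}+\Delta_i)\cdot C\ge 0$ and $\big(K_{X_i}+(1-\varepsilon_i)\Delta_i\big)\cdot C<0$, hence $\eta_i\in(1-\varepsilon_i,1]$ with $(K_{X_i}+\eta_i\Delta_i)\cdot C=0$, and therefore $K_{X_i}+\eta_i\Delta_i\equiv_{Y_i}0$ since $\rho(X_i/Y_i)=1$.

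Next I would force $\eta_i=1$ for $i\gg 0$. Restricting $K_{X_i}+\eta_i\Delta_i$ to a very general fibre $F_i$ of $g_i$ gives a log canonical pair $(F_i,\eta_i\Delta_{F_i})$ with $K_{F_i}+\eta_i\Delta_{F_i}\equiv 0$ and coefficients in $\bigcup_i\{\eta_i c\mid c\in\operatorname{Coeff}(\Delta)\}$, a set satisfying the DCC because $\eta_i\to 1$; then \cite[Theorem 1.6]{BZ16} (the boundedness/ACC package for generalised log canonical pairs) forces these coefficients into a fixed finite set, so $\eta_i=1$ for $i\gg 0$. (If $\dim Y_i=0$ the argument is vacuous and $K_{X_i}+\Delta_i\equiv_Z(1-\eta_i)\Delta_i\ge0$ is already an NQC weak Zariski decomposition over $Z$.) Fixing such an $i$ and writing $X'=X_i$, $\Delta'=\Delta_i$, $Y=Y_i$, $g=g_i$, $\varphi=f_i$, we get $K_{X'}+\Delta'\equiv_Y0$, and arguing as in Lemma~\ref{lem:Gongyo} even $K_{X'}+\Delta'\sim_{\R,Y}0$.

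Finally I would descend and conclude. By the canonical bundle formula applied to $(X',\Delta')$ over $Y$ there is an NQC log canonical g-pair $(Y,\Delta_Y+M_Y)$ of dimension $\le n-1$ with $K_{X'}+\Delta'\sim_\R g^*(K_Y+\Delta_Y+M_Y)$, which is pseudoeffective over $Z$ because $K_{X'}+\Delta'$ is and $g$ is surjective; the inductive hypothesis provides an NQC weak Zariski decomposition of $(Y,\Delta_Y+M_Y)$ over $Z$, which pulls back along $g$ through a common resolution --- pullbacks of NQC, resp.\ effective, divisors staying NQC, resp.\ effective --- to one of $K_{X'}+\Delta'$ over $Z$. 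In either case $K_{X'}+\Delta'$ admits an NQC weak Zariski decomposition over $Z$, and Lemma~\ref{lem:updownWZD} transfers it to $K_X+\Delta$ over $Z$ once one checks that $\varphi$ is $(K_X+\Delta)$-non-positive. The main obstacle is the middle step --- making $\eta_i$ eventually $1$ --- which rests on the boundedness/ACC results of \cite{BZ16} and needs careful bookkeeping of the boundary coefficients along the MMP and under restriction to a general fibre; the upgrade from $\equiv_Y 0$ to $\sim_{\R,Y}0$, the NQC-ness of the moduli part of the canonical bundle formula, and the $(K_X+\Delta)$-non-positivity of $\varphi$ are further points requiring attention.
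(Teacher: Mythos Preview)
Your overall strategy---reach a Mori fibre space on which $K+\Delta$ is relatively trivial, descend via the canonical bundle formula, and invoke the hypothesis in lower dimension---is indeed the paper's strategy, and your ACC argument for forcing $\eta_i=1$ is essentially correct (it is the usual-pair analogue of Lemma~\ref{lem:Gongyo}). However, the final transfer back to $X$ has a genuine gap that you flag but do not resolve: the map $\varphi=f_i$ is a $\big(K_X+(1-\varepsilon_i)\Delta\big)$-MMP, and there is no reason for it to be $(K_X+\Delta)$-non-positive. At any step where the contracted ray $R$ satisfies $\big(K+(1-\varepsilon_i)\Delta\big)\cdot R<0$ but $(K+\Delta)\cdot R>0$---which happens as soon as $\Delta\cdot R$ is large enough---the resulting flip or divisorial contraction is $(K+\Delta)$-\emph{positive}, so Lemma~\ref{lem:updownWZD} does not apply. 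This is not a bookkeeping issue; it is the reason the paper's proof is structured the way it is, and it cannot be repaired without substantially reorganising your argument.

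The paper avoids this obstacle by never running an MMP that could move $K+\Delta$ in the wrong direction. After obtaining the Mori fibre space $S\to Y$ from the $\big(K+(1-\varepsilon)\Delta\big)$-MMP, it does \emph{not} try to compare $K_X+\Delta$ with $K_S+\Delta_S$ directly. Instead it passes to a common resolution $W$ (so the transfer $X\leftarrow W$ is a morphism and Remark~\ref{rem:biratWZD} applies), then runs a $(K_W+\Delta_W)$-MMP \emph{over $Y$}---which terminates by the inductive hypothesis and Theorem~\ref{thm:HanLi}, and is tautologically $(K+\Delta)$-non-positive---to reach a model where $K+\Delta$ is nef over $Y$. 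Only then does it run a $\big(K+(1-\delta)\Delta\big)$-MMP over $Y$ to a Mori fibre space, and Lemma~\ref{lem:trivMMP} guarantees this last MMP is $(K+\Delta)$-\emph{trivial}. Thus every birational map in the chain is $(K+\Delta)$-non-positive and Lemma~\ref{lem:updownWZD} applies at each stage. The secondary issues you raise---upgrading $\equiv_Y0$ to $\sim_{\R,Y}0$ and the NQC-ness of the moduli part for $\R$-coefficients---are handled in the paper's Step~4 by first writing $K+\Delta$ as a convex combination of $\Q$-pairs via \cite[Proposition~3.2(3)]{Bir11} before applying the canonical bundle formula \cite[Theorem~3.6]{FG14}; your direct application to the $\R$-pair would also need this extra step.
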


\begin{proof}
	We proceed in four steps. 
	\medskip
	
	\emph{Step 1.}
	In this step we show that we may assume the following:
	
	\medskip
	
	\emph{Assumption 1.}
	There exists a fibration $\xi\colon X\to Y$ over $Z$ to a normal quasi-projective variety $Y$ such that $\dim Y<\dim X$ and such that:
	\begin{enumerate}
		\item[(a$_1$)] $\nu\big(F,(K_X+\Delta)|_F\big)=0$ and $h^1(F,\OO_F)=0$ for a very general fibre $F$ of $\xi$, 
		\item[(b$_1$)] $K_X+(1-\varepsilon)\Delta$ is not $\xi$-pseudoeffective for any $\varepsilon>0$. 
	\end{enumerate}
	
	\medskip
	
	To this end, pick a decreasing sequence $ \left\{ \varepsilon_i \right\} $ of positive numbers such that $\lim\limits_{i\to\infty}\varepsilon_i=0$. By \cite[Lemma 3.1]{Gon15}\footnote{This lemma works in the relative setting.} applied to the divisors $\Delta_i:=(1-\varepsilon_i)\Delta$, there exists a birational contraction $ \varphi\colon X\dashrightarrow S$ over $Z$ and a fibration $f\colon S\to Y$ over $Z$ such that, if we set $\Delta_S:=\varphi_*\Delta$, then:
	\begin{enumerate}
		\item[(a)] $ (S,\Delta_S) $ is a $\Q$-factorial log canonical pair, 
		\item[(b)] $ Y $ is a normal quasi-projective variety with $\dim Y<\dim X$,
		\item[(c)] $ K_S + \Delta_S \equiv_Y 0 $,
		\item[(d)] $\varphi$ is a $\big(K_X+(1-\varepsilon_i)\Delta\big)$-MMP for some $i\gg0$ and $f$ is the corresponding Mori fibre space.
	\end{enumerate}
	
	Let $(p,q)\colon W\to X\times S$ be a smooth resolution of indeterminacies of $\varphi$. We may write
	\begin{equation}\label{eq:onW}
	K_W+\Delta_W\sim_\R p^*(K_X+\Delta)+E,
	\end{equation}
	where the divisors $\Delta_W$ and $E$ are effective and have no common components. By passing to a higher model we may assume that the pair $(W,\Delta_W)$ is log smooth. 
	\begin{center}
		\begin{tikzcd}
			& W \arrow[dl, "p" swap] \arrow[dr, "q"] && \\
			X \arrow[rr, dashed, "\varphi" ] && S \arrow[r, "f"] & Y 
		\end{tikzcd}
	\end{center} 
	
	Let $F$ be a very general fibre of $f$ and set $F_W=q^{-1}(F)\subseteq W$, $\Delta_F:=\Delta_S|_F$ and $\Delta_{F_W}:=\Delta_W|_{F_W}$. Then the divisors $K_F+\Delta_F$ and $K_{F_W}+\Delta_{F_W}$ are pseudoeffective and we have $(q|_{F_W})_*(K_{F_W}+\Delta_{F_W})=K_F+\Delta_F$. Hence, by \cite[Lemma 3.1]{DL15} and by (c),
	$$\nu(F_W,K_{F_W}+\Delta_{F_W})\leq\nu(F,K_F+\Delta_F)=0,$$
	hence $\nu(F_W,K_{F_W}+\Delta_{F_W})=0$.
		
	Moreover, for every $\varepsilon>0$, the divisor $K_{F_W}+(1-\varepsilon)\Delta_{F_W}$ is not pseudoeffective: otherwise, the divisor $K_F+(1-\varepsilon)\Delta_F=(q_{F_W})_*\big(K_{F_W}+(1-\varepsilon)\Delta_{F_W}\big)$ would be pseudoeffective for some $\varepsilon>0$, a contradiction to (c) and (d).
		
	Furthermore, since $S$ is a klt variety by (d), so is $F$, and hence $F$ has rational singularities. In addition, $h^1(F,\OO_F)=0$ by (d) and by the Kodaira vanishing theorem. It follows that $h^1(F_W,\OO_{F_W})=0$.
	
	If $K_W+\Delta_W$ has an NQC weak Zariski decomposition over $ Z $, then $K_X+\Delta$ has an NQC weak Zariski decomposition  over $ Z $ by Lemma \ref{lem:updownWZD}. 
	
	Therefore, by replacing $(X,\Delta)$ by $(W,\Delta_W)$ and by setting $\xi:=f\circ q$, we achieve Assumption 1.
	
	\medskip
	
	\emph{Step 2.}
	If $\dim Y=0$ (and thus necessarily $\dim Z=0$), then $K_X+\Delta\equiv N_\sigma(K_X+\Delta)$ by \cite[Proposition V.2.7(8)]{Nak04}. Hence $K_X+\Delta$ has an NQC weak Zariski decomposition, and we are done. 
	
	\medskip
	
	\emph{Step 3.}
	Assume from now on that $\dim Y>0$. In this step we show that we may assume the following:
	
	\medskip
	
	\emph{Assumption 2.}
	There exists a fibration $g\colon X\to T$ to a normal quasi-projective variety $T$ such that:
	\begin{enumerate}
	\item[(a$_2$)] $0<\dim T<\dim X$, 
	\item[(b$_2$)] $K_X+\Delta\equiv_T 0$,
	\item[(c$_2$)] the numerical equivalence over $T$ coincides with the $\R$-linear equi\-valence over $T$.
	\end{enumerate} 
\emph{However, instead of the pair $(X,\Delta)$ being $\Q$-factorial dlt, we may only assume that it is a $\Q$-factorial log canonical pair and $(X,0)$ is klt.}
	
	\medskip
	
	To this end, by Assumption 1 and by Corollary \ref{cor:relativeMMPcontracting} the divisor $K_X+\Delta$ is effective over $Y$. By assumptions of the theorem and by Theorem \ref{thm:HanLi} we may run a $(K_X+\Delta)$-MMP with scaling of an ample divisor over $Y$ which terminates, and we obtain a birational contraction $ \theta \colon X \dashrightarrow X' $. Set $\Delta':=\theta_*\Delta $ and let $\xi'\colon X'\to Y$ be the induced morphism. 
	
	By Lemma \ref{lem:trivMMP} there exists a small rational number $\delta$ such that, if we run a $ \big(K_{X'}+(1-\delta)\Delta' \big) $-MMP with scaling of an ample divisor over $Y$, then this MMP is $(K_{X'}+\Delta')$-trivial. Note that $ K_{X'}+(1-\delta)\Delta'$ is not $\xi'$-pseudoeffective: indeed, by possibly choosing $\delta$ smaller, we may assume that the map $\theta$ is $ \big(K_X+(1-\delta)\Delta\big)$-negative, and the claim follows since $K_X+(1-\delta)\Delta$ is not $\xi$-pseudoeffective by (b$_1$). Therefore, this relative $ \big(K_{X'}+(1-\delta)\Delta' \big) $-MMP terminates with a Mori fibre space $f''\colon X''\to Y''$ over $Y$ by \cite[Corollary 1.3.3]{BCHM}. Let $\theta' \colon X' \dashrightarrow X''$ denote that MMP and set $\Delta'':=\theta'_*\Delta'$. 
	
	\begin{center}
		\begin{tikzcd}
			X \arrow[r, dashed, "\theta"] \arrow[dr, "\xi" swap] & X' \arrow[r, dashed, "\theta'"] \arrow[d, "\xi'"] & X'' \arrow[d, "f''"] \arrow[dl] \\
			& Y & Y'' \arrow[l]
		\end{tikzcd}
	\end{center} 
	
	Then the pair $ (X'',\Delta'') $ is $ \Q $-factorial log canonical, the pair $(X'',0)$ is klt since the pair $(X'',(1-\delta)\Delta'')$ is dlt, and we have 
	$$K_{X''} + \Delta'' \equiv_{Y''} 0$$
	by Lemma \ref{lem:trivMMP}. Furthermore, the numerical equivalence over $Y''$ coincides with the $\R$-linear equivalence over $Y''$, since $f''$ is an extremal contraction \cite[Theorem 3.25(4)]{KM98}.
	
	If $K_{X''}+\Delta''$ has an NQC weak Zariski decomposition over $ Z $, then $ K_X + \Delta $ has an NQC weak Zariski decomposition over $ Z $ by Lemma \ref{lem:updownWZD}.
	
	Therefore, by replacing $(X,\Delta)$ by $(X'',\Delta'')$ and by setting $T:=Y''$ and $g:=f''$, we achieve Assumption 2.
	
	\medskip
	
	\emph{Step 4.}
	By \cite[Proposition 3.2(3)]{Bir11} there exist $\Q$-divisors $\Delta_1,\dots,\Delta_m$ such that each pair $(X,\Delta_i)$ is log canonical with $ K_X + \Delta_i $ nef over $T$, and there exist positive real numbers $r_1,\dots,r_m$ such that $\sum r_i=1$ and
	\begin{equation}\label{eq:2}
		K_X+\Delta=\sum\limits_{i=1}^m r_i(K_X+\Delta_i).
	\end{equation}
	Then clearly $K_X+\Delta_i\equiv_T0$ for all $i$ by (b$_2$) and by \eqref{eq:2}, so $K_X+\Delta_i\sim_{\Q,T}0$ for all $i$ by (c$_2$). Hence, by \cite[Theorem 3.6]{FG14} for each $ i \in \{1,\dots,m\}$ there exist log canonical g-pairs $(T/Z,B_i+M_i)$ on $T$ such that the $B_i$ and $M_i$ are $\Q$-divisors and
	\begin{equation}\label{eq:3}
	K_X+\Delta_i\sim_\Q g^*(K_T+B_i+M_i).
	\end{equation}
	By \eqref{eq:2} and \eqref{eq:3} we obtain 
	\begin{equation}\label{eq:5}
	K_X + \Delta = g^* (K_T + B_T + M_T),
	\end{equation} 
	where $ B_T := \sum r_i B_i $ and $ M_T := \sum r_i M_i $.
	By construction, the resul\-ting g-pair $ (T/Z, B_T + M_T) $ is NQC log canonical and $ K_T + B_T + M_T $ is pseudoeffective over $Z$ by \eqref{eq:5}. Hence, by assumptions of the theorem and by (a$_2$), the g-pair $ (T/Z, B_T + M_T) $ admits an NQC weak Zariski decomposition over $ Z $, which induces an NQC weak Zariski decomposition over $ Z $ for $ (X/Z,\Delta) $ due to \eqref{eq:5} and Remark \ref{rem:biratWZD}, as desired.
\end{proof}

\begin{thm}\label{thm:HM}
	Assume the existence of NQC weak Zariski decompositions for NQC log canonical g-pairs of dimensions at most $n-1$. 
	
	Let $(X/Z,\Delta+M)$ be a pseudoeffective NQC $\Q$-factorial dlt g-pair of dimension $n$ such that for each $\varepsilon>0$ the divisor $K_X+\Delta+(1-\varepsilon)M$ is not pseudoeffective over $Z$. Then $(X,\Delta+M)$ has an NQC weak Zariski decomposition over $Z$.
\end{thm}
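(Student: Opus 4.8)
The plan is to mirror the structure of the proof of Theorem~\ref{thm:maintechnical}, replacing the pair $(X,\Delta)$ with the g-pair $(X,\Delta+M)$ and the varying boundary $(1-\varepsilon)\Delta$ with the varying nef part $(1-\varepsilon)M$, using Lemma~\ref{lem:Gongyo} in place of the usual-pair statement \cite[Lemma 3.1]{Gon15}. The first step is a reduction: by Lemma~\ref{lem:Gongyo} there is a birational contraction $\varphi\colon X\dashrightarrow X'$ over $Z$ and a fibration $f\colon X'\to Y$ over $Z$ with $(X',\Delta'+M')$ a $\Q$-factorial log canonical g-pair, $K_{X'}+\Delta'+M'\sim_{\R,Y}0$, and $\varphi$ a $\big(K_X+\Delta+(1-\varepsilon)M\big)$-MMP with $f$ its Mori fibre space, for some $0<\varepsilon\ll1$. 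Passing to a log smooth resolution of indeterminacies $(p,q)\colon W\to X\times X'$ and writing $K_W+\Delta_W+M_W\sim_\R p^*(K_X+\Delta+M)+E$ with $\Delta_W,E$ effective without common components, one checks on a very general fibre $F_W=q^{-1}(F)$, $F$ a very general fibre of $f$, that $\nu\big(F_W,(K_{F_W}+\Delta_{F_W}+M_{F_W})|_{F_W}\big)=0$ (using \cite[Lemma 3.1]{DL15} or its g-pair analogue together with $K_{X'}+\Delta'+M'\equiv_Y 0$), that $K_{F_W}+\Delta_{F_W}+(1-\varepsilon)M_{F_W}$ is not pseudoeffective for any $\varepsilon>0$ (otherwise its pushforward to $F$ contradicts the Mori fibre space property), and that $h^1(F_W,\OO_{F_W})=0$ since $X'$ is klt, $F$ therefore has rational singularities, and $h^1(F,\OO_F)=0$ by Kodaira vanishing. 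By Lemma~\ref{lem:updownWZD} it then suffices to treat $(W,\Delta_W+M_W)$, so after renaming we may assume there is a fibration $\xi\colon X\to Y$ over $Z$ with $\dim Y<\dim X$ satisfying the analogues of (a$_1$) and (b$_1$).

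If $\dim Y=0$ then $\dim Z=0$ and $K_X+\Delta+M\equiv N_\sigma(K_X+\Delta+M)$ by \cite[Proposition V.2.7(8)]{Nak04} applied on a resolution, which is an NQC weak Zariski decomposition; so assume $\dim Y>0$. The next step reduces to the situation where $K_X+\Delta+M\equiv_T 0$ over a positive-dimensional base $T$ with numerical and $\R$-linear equivalence over $T$ coinciding. Here one uses Corollary~\ref{cor:relativeMMPcontracting} (via (a$_1$)) to see $K_X+\Delta+M$ is effective over $Y$, then runs a $(K_X+\Delta+M)$-MMP with scaling of an ample divisor over $Y$, which terminates by Theorem~\ref{thm:HanLi}(ii) since $(X,0)$ is klt; let $\theta\colon X\dashrightarrow X'$ be the resulting birational contraction and $\xi'\colon X'\to Y$ the induced morphism. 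By Lemma~\ref{lem:trivMMPg} there is a small rational $\delta>0$ such that a $\big(K_{X'}+\Delta'+(1-\delta)M'\big)$-MMP with scaling of an ample divisor over $Y$ is $(K_{X'}+\Delta'+M')$-trivial; choosing $\delta$ small enough that $\theta$ is $\big(K_X+\Delta+(1-\delta)M\big)$-negative, non-$\xi$-pseudoeffectivity of $K_X+\Delta+(1-\delta)M$ forces this MMP to terminate with a Mori fibre space $f''\colon X''\to Y''$ over $Y$ by \cite[Corollary 1.3.3]{BCHM}. Then $(X'',\Delta''+M'')$ is $\Q$-factorial log canonical with $(X'',0)$ klt, $K_{X''}+\Delta''+M''\equiv_{Y''}0$ by Lemma~\ref{lem:trivMMPg}, and numerical $=$ $\R$-linear equivalence over $Y''$ by extremality of $f''$; Lemma~\ref{lem:updownWZD} lets us replace $(X,\Delta+M)$ by $(X'',\Delta''+M'')$ and set $T:=Y''$, $g:=f''$, with $0<\dim T<\dim X$.

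For the final step, express $K_X+\Delta+M$ as a convex $\Q$-combination of g-pairs $K_X+\Delta_i+M_i$ with each $K_X+\Delta_i+M_i$ nef over $T$ and with $\Delta_i,M_i$ $\Q$-divisors, using \cite[Proposition 3.16]{HanLi}; since $K_X+\Delta+M\equiv_T 0$, each $K_X+\Delta_i+M_i\equiv_T 0$, hence $\sim_{\Q,T}0$ by the coincidence of equivalences. The canonical bundle formula for g-pairs — \cite[Theorem 3.6]{FG14} in the generalised setting, or the relevant adjunction statement for fibrations with $K_X+\Delta_i+M_i\sim_{\Q,T}0$ — yields NQC log canonical g-pairs $(T/Z,B_i+M_{T,i})$ with $K_X+\Delta_i+M_i\sim_\Q g^*(K_T+B_i+M_{T,i})$, and taking the same convex combination gives an NQC log canonical g-pair $(T/Z,B_T+M_T)$ with $K_X+\Delta+M\sim_\R g^*(K_T+B_T+M_T)$; this divisor is pseudoeffective over $Z$, so by the inductive hypothesis in dimension $\dim T\le n-1$ it admits an NQC weak Zariski decomposition over $Z$, which pulls back via $g$ and Remark~\ref{rem:biratWZD} to one for $K_X+\Delta+M$. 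The main obstacle I anticipate is the bookkeeping in Step~1 and Step~3: ensuring that the various non-pseudoeffectivity hypotheses survive the MMPs and restrictions to general fibres (and that the canonical bundle formula applies with the nef part $M$ present, which is why one must arrange the $\Q$-coefficient reduction of \cite[Proposition 3.16]{HanLi} and invoke the g-pair version of \cite[Theorem 3.6]{FG14} rather than the classical one), together with the careful choice of $\delta$ in Lemma~\ref{lem:trivMMPg} so that the $(1-\delta)M$-MMP is simultaneously trivial for $K_X+\Delta+M$ and genuinely non-pseudoeffective over $Y$.
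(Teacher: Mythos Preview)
Your proposal is correct and follows essentially the same approach as the paper. The only refinements in the paper's version are the precise reference for the g-pair canonical bundle formula (\cite[Theorem 6]{Fil19}, rather than a generalised form of \cite[Theorem 3.6]{FG14}) and, in the final step, the explicit observation that $\NEb(X/T)$ is an extremal face of $\NEb(X/Z)$, which is how \cite[Proposition 3.16]{HanLi} is applied so that the resulting $\Q$-g-pairs $(X/Z,\Delta_i+M_i)$ remain log canonical as g-pairs over $Z$ while having $K_X+\Delta_i+M_i$ nef over $T$.
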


\begin{proof}
	The proof follows the same strategy as that of \cite[Theorem 2]{HM18}, and we include all the details. 
		
	By Lemma \ref{lem:Gongyo} there exists a birational contraction $ \varphi\colon X\dashrightarrow S$ over $Z$ and a fibration $f\colon S\to Y$ over $Z$ such that, if we set $\Delta_S:=\varphi_*\Delta$ and $M_S:=\varphi_*M$, then:
	\begin{enumerate}
		\item[(a)] $ (S,\Delta_S+M_S) $ is a $\Q$-factorial log canonical g-pair, 
		\item[(b)] $ Y $ is a normal quasi-projective variety with $\dim Y<\dim X$,
		\item[(c)] $ K_S + \Delta_S +M_S\sim_{\R,Y} 0 $,
		\item[(d)] $\varphi$ is a $\big(K_X+\Delta+(1-\varepsilon)M\big)$-MMP for some $0<\varepsilon\ll1$ and $f$ is the corresponding Mori fibre space.
	\end{enumerate}
	As in Step 1 of the proof of Theorem \ref{thm:maintechnical}, by replacing $X$ by a higher model we may assume the following:
	
	\medskip
	
	\emph{Assumption 1.}
	There exists a fibration $\xi\colon X\to Y$ over $Z$ to a normal quasi-projective variety $Y$ such that $\dim Y<\dim X$ and such that:
	\begin{enumerate}
		\item[(a$_1$)] $\nu\big(F,(K_X+\Delta+M)|_F\big)=0$ and $h^1(F,\OO_F)=0$ for a very general fibre $F$ of $\xi$, 
		\item[(b$_1$)] $K_X+\Delta+(1-\varepsilon)M$ is not $\xi$-pseudoeffective for any $\varepsilon>0$. 
	\end{enumerate}
	
	\medskip
	
	If $\dim Y=0$ (and thus necessarily $\dim Z=0$), then $K_X+\Delta+M$ has an NQC weak Zariski decomposition as in Step 2 of the proof of Theorem \ref{thm:maintechnical}, and we are done. 
	
	\medskip
	
	If $\dim Y>0$, note that $(X,\Delta+M)$ is also a g-pair over $Y$. It follows by (a$_1$) and by Corollary \ref{cor:relativeMMPcontracting} that the divisor $K_X+\Delta+M$ is effective over $Y$. By Theorem \ref{thm:HanLi} we may run a $(K_X+\Delta+M)$-MMP with scaling of an ample divisor over $Y$ which terminates, and we obtain a birational contraction $ \theta \colon X \dashrightarrow X' $. Set $\Delta':=\theta_*\Delta $ and $M':=\theta_*M$, and let $\xi'\colon X'\to Y$ be the induced morphism. 
	
	By Lemma \ref{lem:trivMMPg} there exists a small rational number $\delta$ such that, if we run a $ \big(K_{X'}+\Delta'+(1-\delta)M' \big) $-MMP with scaling of an ample divisor $A$ over $Y$, then this MMP is $(K_{X'}+\Delta'+M')$-trivial. Note that this MMP is also a $(K_{X'}+\Delta'+(1-\delta)M'+\varepsilon A)$-MMP for some $0<\varepsilon\ll1$, hence it is the MMP with scaling of $A$ over $Y$ for some klt pair by \cite[Lemma 3.5]{HanLi}. 
	
	The divisor $ K_{X'}+\Delta'+(1-\delta)M'$ is not $\xi'$-pseudoeffective by (b$_1$) as in Step 3 of the proof of Theorem \ref{thm:maintechnical}. Therefore, this relative $ \big(K_{X'}+\Delta'+(1-\delta)M' \big) $-MMP terminates with a Mori fibre space $f''\colon X''\to Y''$ over $Y$ as in the proof of Lemma \ref{lem:Gongyo}. Let $\theta'\colon X'\dashrightarrow X''$ denote that MMP, and set $\Delta'':=\theta'_*\Delta'$ and $M'':=\theta'_*M'$. 
		
	Then the NQC g-pair $ (X'',\Delta''+M'') $ is $ \Q $-factorial log canonical, the pair $(X'',0)$ is klt by \cite[Remark 2.3]{HanLi} since the g-pair 
	$ (X'',\Delta''+(1-\delta)M'') $ is dlt, and we have 
	\begin{equation}\label{eq:78}	
	K_{X''} + \Delta''+M'' \equiv_{Y''} 0
	\end{equation}
	by Lemma \ref{lem:trivMMPg}. Furthermore, the numerical equivalence over $Y''$ coincides with the $\R$-linear equivalence over $Y''$ since $f''$ is an extremal contraction \cite[Theorem 3.25(4)]{KM98}. Moreover, the divisor $ M'' $ is ample over $ Y'' $ by \eqref{eq:78} and since ${-}(K_{X''}+\Delta''+(1-\delta)M'')$ is $f''$-ample.
	
	Then as in Step 3 of the proof of Theorem \ref{thm:maintechnical} we may replace $(X,\Delta+M)$ by $(X'',\Delta''+M'')$ and set $T:=Y''$, and thus we may assume the following:
	
	\medskip
	
	\emph{Assumption 2.}
	There exists a fibration $g\colon X\to T$ to a normal quasi-projective variety $T/Z$ such that:
	\begin{enumerate}
	\item[(a$_2$)] $0<\dim T<\dim X$, 
	\item[(b$_2$)] $K_X+\Delta+M\equiv_T 0$,
	\item[(c$_2$)] the numerical equivalence over $T$ coincides with the $\R$-linear equi\-valence over $T$,
	\item[(d$_2$)] $\rho(X/T)=1$,
	\item[(e$_2$)] $M$ is ample over $T$.
	\end{enumerate} 
\emph{However, instead of the g-pair $(X,\Delta+M)$ being $\Q$-factorial dlt, we may only assume that it is a $\Q$-factorial log canonical g-pair and $(X,0)$ is klt.}
	
	\medskip
	
	\emph{Step 4.}
	The g-pair $(X/Z,\Delta+M)$ is $\Q$-factorial log canonical and such that $(X,0)$ is klt and $K_X+\Delta+M$ is nef over $ T $. Since $\NEb(X/T)$ is extremal in $\NEb(X/Z)$ by \cite[p.\ 12]{Deb01}, by applying \cite[Proposition 3.16]{HanLi} to the collection of extremal rays of $\NEb(X/Z)$ corresponding to $\NEb(X/T)$, we deduce that there exist $\Q$-divisors $\Delta_1,\dots,\Delta_m$ and $ M_1,\dots,M_m $ such that each g-pair $(X/Z,\Delta_i + M_i)$ is log canonical with $ K_X + \Delta_i + M_i $ nef over $T$, and there exist positive real numbers $r_1,\dots,r_m$ such that $\sum r_i=1$ and
	\begin{equation}\label{eq:6}
	K_X+\Delta+M=\sum\limits_{i=1}^m r_i(K_X+\Delta_i+M_i).
	\end{equation}
	Then clearly $K_X+\Delta_i+M_i\equiv_T0$ for all $i$ by (b$_2$) and by \eqref{eq:6}, and thus $K_X+\Delta_i+M_i\sim_{\Q,T}0$ for all $ i $ by (c$_2$). Hence, by \cite[Theorem 6]{Fil19}\footnote{This is a generalisation of \cite[Theorem 1.4]{Fil18} to the context of projective morphisms of quasi-projective g-pairs with rational boundary divisors and rational nef divisors.} for each $ i \in \{1,\dots,m\}$ there exist log canonical g-pairs $(T/Z,B_i+N_i)$ on $T$ such that the $B_i$ and $N_i$ are $\Q$-divisors and
	\begin{equation}\label{eq:7}
	K_X+\Delta_i+M_i\sim_\Q g^*(K_T+B_i+N_i).
	\end{equation}
	By \eqref{eq:6} and \eqref{eq:7} we obtain
	\begin{equation}\label{eq:8}
	K_X + \Delta + M = g^* (K_T + B_T + N_T), 
	\end{equation}
	where $ B_T := \sum r_i B_i $ and $ N_T := \sum r_i N_i $. By construction, the resulting g-pair $ (T/Z, B_T + N_T) $ is NQC log canonical and $ K_T + B_T + N_T $ is pseudoeffective over $ Z $ by \eqref{eq:8}. Hence, by assumptions of the theorem and by (a$_2$), the g-pair $ (T/Z, B_T + N_T) $ admits an NQC weak Zariski decomposition over $Z$, which induces an NQC weak Zariski decomposition over $Z$ for $(X/Z,\Delta+M)$ due to \eqref{eq:8} and Remark \ref{rem:biratWZD}, as desired.
\end{proof}

\begin{rem}
The canonical bundle formula from \cite{Fil19} is extended to NQC g-pairs in \cite{HanLiu19}, which can be used to shorten Step 4 of the previous proof.
\end{rem}

\begin{cor}\label{cor:HM}
	Assume that log canonical pairs of dimensions at most $n$ have NQC weak Zariski decompositions. Then NQC log canonical g-pairs of dimension $n$ have NQC weak Zariski decompositions. 
\end{cor}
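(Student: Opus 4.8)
The plan is to prove the following by induction on $d\in\{0,1,\dots,n\}$: every pseudoeffective NQC log canonical g-pair of dimension $d$ admits an NQC weak Zariski decomposition over its base. The case $d=0$ is trivial, so fix $d$ with $1\le d\le n$ and assume the assertion in all strictly smaller dimensions; this is precisely the hypothesis of Theorem \ref{thm:HM} with $n$ replaced by $d$. Let $(X/Z,\Delta+M)$ be an NQC log canonical g-pair of dimension $d$, which we may assume is pseudoeffective over $Z$, since otherwise it has no NQC weak Zariski decomposition. Replacing it by a dlt blowup (Lemma \ref{lem:dltblowup}) and using Remark \ref{rem:biratWZD}, we may further assume that $(X,\Delta+M)$ is $\Q$-factorial NQC dlt; in particular $M$, and each divisor $K_X+\Delta+tM$, is $\R$-Cartier.

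Put $\tau:=\inf\{t\ge 0\mid K_X+\Delta+tM\text{ is pseudoeffective over }Z\}$. Since $M$ is pseudoeffective over $Z$ (being the pushforward of the nef divisor $M'$) and $K_X+\Delta+M$ is pseudoeffective over $Z$, this set is upward closed and closed, hence equals $[\tau,\infty)$ with $\tau\le 1$; in particular $K_X+\Delta+\tau M$ is pseudoeffective over $Z$, while $K_X+\Delta+sM$ is not for any $s<\tau$. If $\tau=0$, then $K_X+\Delta$ is pseudoeffective over $Z$, and $(X,\Delta)$ is a log canonical pair of dimension $d\le n$ by Remark \ref{rem:q-cartier}; by the hypothesis of the corollary it has an NQC weak Zariski decomposition over $Z$, and since $M$ is the pushforward of the NQC divisor $M'$, Remark \ref{rem:addM} shows that $K_X+\Delta+M=(K_X+\Delta)+M$ does as well.

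If $\tau>0$, consider the NQC g-pair $(X/Z,\Delta+\tau M)$ with nef part $\tau M'$. Writing $f^*M=M'+E$ with $E\ge 0$ and $f$-exceptional (Negativity lemma), one computes $f^*(K_X+\Delta+\tau M)=K_{X'}+\bigl(\Delta'-(1-\tau)E\bigr)+\tau M'$, so every discrepancy of $(X,\Delta+\tau M)$ dominates the corresponding discrepancy of $(X,\Delta+M)$; hence $(X,\Delta+\tau M)$ is log canonical, and, since $X$ is $\Q$-factorial, the $\Delta$-part is unchanged, and discrepancies only increase, it is in fact $\Q$-factorial NQC dlt. It is pseudoeffective over $Z$ because $K_X+\Delta+\tau M$ is, and $K_X+\Delta+(1-\varepsilon)(\tau M)$ is not pseudoeffective over $Z$ for any $\varepsilon>0$ because $(1-\varepsilon)\tau<\tau$. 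Therefore Theorem \ref{thm:HM} applies to $(X/Z,\Delta+\tau M)$ and produces an NQC weak Zariski decomposition of $K_X+\Delta+\tau M$ over $Z$. Since $(1-\tau)M$ is the pushforward of the NQC divisor $(1-\tau)M'$, Remark \ref{rem:addM} applied to $K_X+\Delta+M=(K_X+\Delta+\tau M)+(1-\tau)M$ completes the induction.

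The delicate point is the structure of the induction rather than any single step: Theorem \ref{thm:HM} turns a statement about usual pairs together with the $(d-1)$-dimensional g-pair case into the $d$-dimensional g-pair case, so the hypothesis on usual pairs can only be used through the branch $\tau=0$, and the argument must be carried out dimension by dimension. The remaining checks — that $(X,\Delta+\tau M)$ is log canonical and dlt, and that the set $\{t\mid K_X+\Delta+tM\ \text{pseudoeffective}/Z\}$ behaves as claimed — are routine consequences of the Negativity lemma, of the fact that $M$ is pseudoeffective over $Z$, and of the closedness of the relative pseudoeffective cone.
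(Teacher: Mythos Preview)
Your proof is correct and follows essentially the same approach as the paper's: induction on the dimension, reduction to a $\Q$-factorial dlt g-pair via a dlt blowup, the threshold $\tau$ (denoted $\mu$ in the paper), and the two cases $\tau=0$ (use the hypothesis on usual pairs plus Remark~\ref{rem:addM}) and $\tau>0$ (use Theorem~\ref{thm:HM} on $(X,\Delta+\tau M)$ plus Remark~\ref{rem:addM}). You additionally spell out why $(X,\Delta+\tau M)$ is $\Q$-factorial dlt and why the pseudoeffectivity set equals $[\tau,\infty)$, which the paper leaves implicit.
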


\begin{proof}
	The proof is by induction on the dimension $n$. We may therefore assume that NQC log canonical g-pairs of dimensions at most $n-1$ have NQC weak Zariski decompositions. 
	
	Let $(X/Z,\Delta+M)$ be a pseudoeffective NQC log canonical g-pair. By passing to a dlt blowup and by Remark \ref{rem:biratWZD} we may assume that it is a pseudoeffective NQC $ \Q $-factorial dlt g-pair.  Set
	$$\mu=\inf\big\{t\in\R_{\geq0}\mid K_X+\Delta+tM\text{ is pseudoeffective over }Z\big\}.$$
	We distinguish two cases.
	
	\medskip
	
	Assume first that $ \mu = 0 $. By Remark \ref{rem:q-cartier} the pseudoeffective pair $(X,\Delta)$ is log canonical, hence it has an NQC weak Zariski decomposition over $Z$ by assumption, and therefore so does the g-pair $(X,\Delta+M)$ by Remark \ref{rem:addM}.
	
	\medskip
	
	Assume now that $0<\mu\leq1$. It follows by Theorem \ref{thm:HM} that the g-pair $(X,\Delta+\mu M)$ has an NQC weak Zariski decomposition over $ Z $, and therefore so does the g-pair $(X,\Delta+M)$ by Remark \ref{rem:addM}.
\end{proof}

We can now give the proof of Theorem \ref{thm:HMref}.

\begin{proof}[Proof of Theorem \ref{thm:HMref}]
	By Lemma \ref{lem:downthedimension} we may assume the existence of NQC weak Zariski decompositions for smooth varieties of dimensions at most $n$. By induction on the dimension we may assume the existence of NQC weak Zariski decompositions for NQC log canonical g-pairs of dimensions at most $n-1$. Thus, by Corollary \ref{cor:HM} it suffices to show the existence of NQC weak Zariski decompositions for log canonical pairs of dimension $n$.
	
	Let $(X/Z,\Delta)$ be a pseudoeffective log canonical pair of dimension $n$. By passing to a dlt blowup and by Remark \ref{rem:biratWZD} we may assume that the pair $(X,\Delta)$ is $ \Q $-factorial dlt. Furthermore, by passing to a log resolution and by Lemma \ref{lem:updownWZD} we may assume that the pair $(X,\Delta)$ is log smooth. Set
	$$\tau:=\inf\{t\in \R_{\geq0}\mid K_X+t\Delta\text{ is pseudoeffective over }Z\}.$$
	We distinguish two cases.
	
	\medskip
	
	Assume first that $\tau=0$. Then the pseudoeffective smooth pair $(X,0)$ has an NQC weak Zariski decomposition over $Z$ by assumption, and therefore so does the pair $(X,\Delta)$. 
	
	\medskip
	
	Assume now that $0<\tau\leq1$. It follows by Theorem \ref{thm:maintechnical} that the pair $ (X,\tau \Delta) $ has an NQC weak Zariski decomposition over $Z$, and therefore so does the pair $(X,\Delta)$. 
\end{proof}

\section{Proofs of the main results}\label{sec:main1}

In this section we prove the remaining results announced in the introduction. 

\begin{proof}[Proof of Theorem \ref{thm:mainthm}]
By assumption, by Remark \ref{rem:MMimplWZD} and by Lemma \ref{lem:downthedimension} we may assume the existence of NQC weak Zariski decompositions for smooth varieties of dimensions at most $n$. 

Let $(X/Z,\Delta)$ be a pseudoeffective log canonical pair of dimension $n$. By Theorem \ref{thm:HMref} we may assume the existence of NQC weak Zariski decompositions for NQC log canonical g-pairs of dimensions at most $n-1$, and that the pair 
$ (X,\Delta) $ has an NQC weak Zariski decomposition over $Z$. We conclude by Theorem \ref{thm:HH_gMM}. 
\end{proof}

We have also the following version of Theorem \ref{thm:mainthm} for g-pairs:

\begin{thm} \label{thm:mainthm_g-pairs}
	Assume the existence of minimal models for smooth varieties of dimension $n$. 
	
	Let $ (X/Z,\Delta+M) $ be a pseudoeffective NQC log canonical g-pair of dimension $ n $. Then the following statements hold.
	\begin{enumerate}
		\item[(i)] The g-pair $ (X,\Delta+M) $ has a minimal model in the sense of Birkar-Shokurov over $Z$.
		\item[(ii)] If additionally $X$ is $ \Q $-factorial and $(X,0)$ is klt, then the g-pair $(X,\Delta +M)$ has a minimal model over $Z$.
	\end{enumerate}	
\end{thm}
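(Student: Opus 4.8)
The plan is to follow the proof of Theorem~\ref{thm:mainthm} almost verbatim, the only difference being that the final appeal is to the full strength of Theorem~\ref{thm:HanLi} rather than to its corollary Theorem~\ref{thm:HH_gMM}. The first step is to promote the hypothesis into a statement about weak Zariski decompositions in all dimensions up to $n$. Since minimal models exist for smooth varieties of dimension $n$ by assumption, Remark~\ref{rem:MMimplWZD} shows that every pseudoeffective smooth pair of dimension $n$ over its base admits an NQC weak Zariski decomposition, and then Lemma~\ref{lem:downthedimension} gives the same for every pseudoeffective smooth pair of dimension at most $n$. Applying Theorem~\ref{thm:HMref} in each dimension $k\le n$ upgrades this to: every NQC log canonical g-pair of dimension at most $n$ admits an NQC weak Zariski decomposition over its base. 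In particular $(X/Z,\Delta+M)$ itself has an NQC weak Zariski decomposition over $Z$, and, a fortiori (since $\Q$-factorial dlt g-pairs are log canonical), the running hypothesis of Theorem~\ref{thm:HanLi} — the existence of NQC weak Zariski decompositions for NQC $\Q$-factorial dlt g-pairs of dimensions at most $n-1$ — is in force.

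Part (i) is then immediate: $(X/Z,\Delta+M)$ is an NQC log canonical g-pair of dimension $n$ with an NQC weak Zariski decomposition over $Z$, so by Theorem~\ref{thm:HanLi}(i) it has a minimal model in the sense of Birkar-Shokurov over $Z$. Note that no $\Q$-factoriality or dlt assumption on $(X,\Delta+M)$ is needed at this stage.

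For part (ii) I would deduce the conclusion from part (i) together with Lemma~\ref{lem:MMcomparison}(ii): under the extra assumptions that $X$ is $\Q$-factorial and $(X,0)$ is klt, having a minimal model in the sense of Birkar-Shokurov over $Z$ is equivalent to having a minimal model over $Z$ in the usual sense, which is precisely the assertion. Alternatively, one can argue directly: under the same hypotheses we may run a $(K_X+\Delta+M)$-MMP with scaling of an ample divisor over $Z$, which terminates by Theorem~\ref{thm:HanLi}(ii), and since it starts from a $\Q$-factorial g-pair and is $(K_X+\Delta+M)$-negative, its output is a minimal model of $(X/Z,\Delta+M)$ in the usual sense.

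There is no genuine obstacle here, as the statement is essentially a repackaging of Theorems~\ref{thm:HMref} and~\ref{thm:HanLi} and of Lemma~\ref{lem:MMcomparison}. The only point requiring care is the bookkeeping of dimensions: one must invoke Lemma~\ref{lem:downthedimension} and Theorem~\ref{thm:HMref} across \emph{all} dimensions up to $n$, so that the hypothesis of Theorem~\ref{thm:HanLi}, which concerns dimensions at most $n-1$, is actually available.
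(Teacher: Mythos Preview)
Your proposal is correct and follows essentially the same approach as the paper: the paper simply says the proof is identical to that of Theorem~\ref{thm:mainthm} with Theorem~\ref{thm:HH_gMM} replaced by Theorem~\ref{thm:HanLi}, which is exactly what you do. Your extra remark that part~(ii) can alternatively be deduced from part~(i) via Lemma~\ref{lem:MMcomparison}(ii) is a harmless variant, since that lemma's proof already rests on the same argument from \cite{HanLi}.
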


\begin{proof}
	The proof is identical to the proof of Theorem \ref{thm:mainthm}, replacing Theorem \ref{thm:HH_gMM} by Theorem \ref{thm:HanLi}.
\end{proof}

\begin{proof}[Proof of Theorem \ref{thm:maincorollary}]
	Note that (i) follows by (ii) by Remark \ref{rem:MMimplWZD}. 
	
	Conversely, assume that the pair $(X,\Delta)$ has an NQC weak Zariski decomposition over $Z$. As in the proof of Theorem \ref{thm:mainthm} we may assume the existence of NQC weak Zariski decompositions for NQC log canonical g-pairs of dimensions at most $n-1$. We conclude by Theorem \ref{thm:HH_gMM}. 
\end{proof}

The same proof, replacing Theorem \ref{thm:HH_gMM} by Theorem \ref{thm:HanLi}, gives the following analogue for g-pairs.

\begin{thm}\label{thm:maincorollary_g-pairs}
	Assume the existence of minimal models for smooth varieties of dimension $n-1$. 
	
	Let $ (X/Z,\Delta+M) $ be a $\Q$-factorial NQC log canonical g-pair of dimension $ n $ such that $ (X,0) $ is klt. The following are equivalent:
	\begin{enumerate}[label=(\roman*)]
		\item $ (X,\Delta+M) $ has an NQC weak Zariski decomposition over $Z$,
		\item $ (X,\Delta+M) $ has a minimal model over $Z$. 
	\end{enumerate} 
\end{thm}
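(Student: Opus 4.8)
The plan is to deduce Theorem~\ref{thm:maincorollary_g-pairs} directly from the machinery already assembled in the excerpt, in close analogy with the proof of Theorem~\ref{thm:maincorollary}, but keeping track of the g-pair structure throughout. First I would observe that the implication (ii)~$\Rightarrow$~(i) is essentially free: if $(X/Z,\Delta+M)$ has a minimal model over $Z$ in the usual sense, then since $(X,0)$ is klt and $X$ is $\Q$-factorial, Lemma~\ref{lem:MMcomparison}(ii) upgrades this to a minimal model in the sense of Birkar–Shokurov over $Z$, and then Remark~\ref{rem:MMimplWZD} yields an NQC weak Zariski decomposition of $K_X+\Delta+M$ over $Z$. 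So the content is in the converse.

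For (i)~$\Rightarrow$~(ii), I would start from the hypothesis that smooth varieties of dimension $n-1$ have minimal models; by Remark~\ref{rem:MMimplWZD} and Lemma~\ref{lem:downthedimension}, this gives NQC weak Zariski decompositions for all smooth varieties of dimension at most $n-1$, and then Theorem~\ref{thm:HMref} (whose proof only needs the existence of weak Zariski decompositions for smooth varieties in the relevant dimensions) upgrades this to NQC weak Zariski decompositions for \emph{all} NQC log canonical g-pairs of dimension at most $n-1$. This is precisely the inductive hypothesis required by Theorem~\ref{thm:HanLi}. Now, given that $(X,\Delta+M)$ itself has an NQC weak Zariski decomposition over $Z$, Theorem~\ref{thm:HanLi}(i) produces a minimal model of $(X,\Delta+M)$ over $Z$ in the sense of Birkar–Shokurov; and since by assumption $X$ is $\Q$-factorial and $(X,0)$ is klt, Lemma~\ref{lem:MMcomparison}(ii) converts this into a genuine minimal model of $(X/Z,\Delta+M)$ over $Z$. (Alternatively one could invoke Theorem~\ref{thm:HanLi}(ii) to run a terminating $(K_X+\Delta+M)$-MMP with scaling of an ample divisor over $Z$ directly, which lands at a $\Q$-factorial dlt minimal model.)

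The only real subtlety — and the step I would watch most carefully — is the passage from a Birkar–Shokurov minimal model to a minimal model in the usual sense in the g-pair setting: this is exactly where Lemma~\ref{lem:MMcomparison}(ii) is used, and it is the place that genuinely requires the hypotheses ``$\Q$-factorial'' and ``$(X,0)$ klt'', mirroring the parenthetical restriction appearing in Theorem~\ref{thm:HanLi}(ii) and in the statement of the theorem. Everything else is bookkeeping: one must make sure that the dlt blowup step, if used, preserves the NQC condition (Remark~\ref{rem:biratWZD}) and that the numerical classes behave well under the birational contraction, but these are routine given the results already in Section~\ref{sec:prelim}. In short, the proof is ``identical to the proof of Theorem~\ref{thm:maincorollary}, replacing Theorem~\ref{thm:HH_gMM} by Theorem~\ref{thm:HanLi} and Lemma~\ref{lem:MMcomparison}(i) by Lemma~\ref{lem:MMcomparison}(ii)'', and I expect the write-up to be only a few lines long.
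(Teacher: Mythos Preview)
Your proposal is correct and follows essentially the same route as the paper: the paper's entire proof is the one-line remark that the argument for Theorem~\ref{thm:maincorollary} goes through verbatim once Theorem~\ref{thm:HH_gMM} is replaced by Theorem~\ref{thm:HanLi}, and you have unpacked exactly that substitution (including the use of Remark~\ref{rem:MMimplWZD}, Lemma~\ref{lem:downthedimension} and Theorem~\ref{thm:HMref} to set up the inductive hypothesis in dimensions $\leq n-1$). Your alternative of invoking Theorem~\ref{thm:HanLi}(ii) directly to terminate an MMP is in fact the closer match to what the paper does, so the detour through Lemma~\ref{lem:MMcomparison}(ii) is optional.
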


\begin{proof}[Proof of Theorem \ref{thm:uniruled}]
	As in the proof of Theorem \ref{thm:mainthm} we may assume the existence of NQC weak Zariski decompositions for NQC log canonical g-pairs of dimensions at most $n-1$. 
	
	Now, as in the second paragraph of the proof of Theorem \ref{thm:HMref} we may assume that the pair $(X,\Delta)$ is log smooth. Then $ K_X $ is not pseudoeffective over $Z$ by \cite[Corollary 0.3]{BDPP}. Set 
	$$\tau:=\inf\big\{t\in \R_{\geq0}\mid K_X+t\Delta\text{ is pseudoeffective over }Z\big\}.$$
	Then $0<\tau\leq1$. It follows by Theorem \ref{thm:maintechnical} that the pair $(X,\tau \Delta)$ has an NQC weak Zariski decomposition over $Z$, and therefore so does the pair $(X,\Delta)$. We conclude by Theorem \ref{thm:HH_gMM}.
\end{proof}

For g-pairs, we have:

\begin{thm} \label{thm:uniruled_g-pairs}
	Assume the existence of minimal models for smooth varieties of dimension $n-1$.
	
	Let $ (X/Z,\Delta+M) $ be a pseudoeffective NQC log canonical g-pair of dimension $ n $ such that a general fibre of the morphism $ X\to Z $ is uniruled. Then $ (X,\Delta+M) $ has a minimal model in the sense of Birkar-Shokurov over $ Z $. 
	
	If additionally $X$ is $ \Q $-factorial and $ (X,0) $ is klt, then $ (X,\Delta+M) $ has a minimal model over $ Z $. 
\end{thm}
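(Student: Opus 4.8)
The plan is to follow the proof of Theorem~\ref{thm:uniruled}, but to organise the induction around a threshold along the nef part $M$ rather than along the boundary, essentially running the argument of Corollary~\ref{cor:HM} while using the uniruledness hypothesis to bypass the step which would otherwise require the existence of minimal models for smooth $n$-folds. First, exactly as in the proof of Theorem~\ref{thm:mainthm}, from the existence of minimal models for smooth varieties of dimension $n-1$ one obtains, via Remark~\ref{rem:MMimplWZD}, Lemma~\ref{lem:downthedimension} and Theorem~\ref{thm:HMref}, the existence of NQC weak Zariski decompositions for NQC log canonical g-pairs of dimensions at most $n-1$. By Lemma~\ref{lem:dltblowup} and Remark~\ref{rem:biratWZD} we may then replace $(X,\Delta+M)$ by a dlt blowup, and thus assume in addition that it is a $\Q$-factorial NQC dlt g-pair; note that $X$ is a klt variety by \cite[Remark 2.3]{HanLi}, and that a general fibre of $X\to Z$ is still uniruled, since uniruledness of the general fibre is a birational invariant.

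Next I would set $\mu:=\inf\{t\in\R_{\geq0}\mid K_X+\Delta+tM\text{ is pseudoeffective over }Z\}$ and distinguish two cases, as in the proof of Corollary~\ref{cor:HM}. Since $K_X+\Delta+M$ is pseudoeffective over $Z$ and the relative pseudoeffective cone is closed, we have $0\leq\mu\leq1$ and $K_X+\Delta+\mu M$ is pseudoeffective over $Z$. If $\mu>0$, then $K_X+\Delta+(1-\varepsilon)(\mu M)$ is not pseudoeffective over $Z$ for every $\varepsilon>0$ by the definition of $\mu$, so, since $(X,\Delta+\mu M)$ is a pseudoeffective NQC $\Q$-factorial dlt g-pair of dimension $n$ (it is dlt because $0\leq\mu\leq1$), Theorem~\ref{thm:HM} shows that $(X,\Delta+\mu M)$ admits an NQC weak Zariski decomposition over $Z$, and hence so does $(X,\Delta+M)$ by Remark~\ref{rem:addM} applied to the NQC divisor $(1-\mu)M'$. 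If $\mu=0$, then $K_X+\Delta$ is pseudoeffective over $Z$; since $X$ is $\Q$-factorial, $(X,\Delta)$ is a pseudoeffective log canonical pair by Remark~\ref{rem:q-cartier}, and a general fibre of $X\to Z$ is uniruled, so by Theorem~\ref{thm:uniruled} the pair $(X,\Delta)$ has a minimal model over $Z$ and therefore admits an NQC weak Zariski decomposition over $Z$ by Lemma~\ref{lem:MMcomparison}(i) and Remark~\ref{rem:MMimplWZD}; then $(X,\Delta+M)$ admits one as well by Remark~\ref{rem:addM}. In either case $(X,\Delta+M)$ admits an NQC weak Zariski decomposition over $Z$, and hence so does the original g-pair by Remark~\ref{rem:biratWZD}.

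Finally, Theorem~\ref{thm:HanLi}(i) yields a minimal model in the sense of Birkar-Shokurov over $Z$, and, when $X$ is $\Q$-factorial and $(X,0)$ is klt, Lemma~\ref{lem:MMcomparison}(ii) upgrades it to a minimal model over $Z$ in the usual sense. The one delicate point is the case $\mu=0$: this is exactly where the uniruledness hypothesis is used, as it allows us to invoke Theorem~\ref{thm:uniruled} --- available under the existence of minimal models for smooth $(n-1)$-folds --- instead of having to appeal to Theorem~\ref{thm:mainthm_g-pairs}, which needs minimal models for smooth $n$-folds. The remaining ingredients are routine once one checks that scaling down the nef part of a dlt g-pair preserves dlt-ness and that passing to and from a dlt blowup is compatible with NQC weak Zariski decompositions, both of which are already handled by Remarks~\ref{rem:biratWZD} and~\ref{rem:addM}.
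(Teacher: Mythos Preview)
Your proof is correct and follows essentially the same approach as the paper's: set up the lower-dimensional inductive hypothesis, run a threshold argument along the nef part $M$, invoke Theorem~\ref{thm:HM} when the threshold is positive and the uniruled case (Theorem~\ref{thm:uniruled}) when it is zero, and then conclude via Theorem~\ref{thm:HanLi}. The only cosmetic differences are that the paper passes to a log smooth model rather than a dlt blowup and cites the \emph{proof} of Theorem~\ref{thm:uniruled} directly to obtain the weak Zariski decomposition, whereas you apply Theorem~\ref{thm:uniruled} as a black box and then extract the decomposition via Remark~\ref{rem:MMimplWZD}; neither difference is substantive.
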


\begin{proof}
	As in the beginning of the proof of Theorem \ref{thm:uniruled} we may assume the existence of NQC weak Zariski decompositions for NQC log canonical g-pairs of dimensions at most $n-1$, and that the g-pair $(X,\Delta+M)$ is log smooth. Set 
	$$\tau_M:=\inf\big\{t\in \R_{\geq0}\mid K_X+\Delta+tM\text{ is pseudoeffective over }Z\big\}.$$
	Then $0\leq\tau_M\leq1$. We claim now that the g-pair $ (X,\Delta+\tau_M M) $ has an NQC weak Zariski decomposition over $Z$. Indeed, if $\tau_M=0$, then $(X,\Delta)$ has an NQC weak Zariski decomposition over $Z$ by the proof of Theorem \ref{thm:uniruled}, while if $ 0 < \tau_M \leq 1 $, then the assertion follows by Theorem \ref{thm:HM}. Hence, the g-pair $(X,\Delta+M)$ has an NQC weak Zariski decomposition over $Z$ by Remark \ref{rem:addM}. We conclude by Theorem \ref{thm:HH_gMM}.
\end{proof}

\begin{proof}[Proof of Corollary \ref{cor:dim4+5}]
	The existence of minimal models for terminal $4$-folds over $ Z $ follows by \cite[Theorem 5-1-15]{KMM87}. Consequently, (i) follows by Theorem \ref{thm:mainthm}, (ii) follows by Theorem \ref{thm:maincorollary}, and (iii) follows by Theorem \ref{thm:uniruled}.
\end{proof} 

\begin{proof}[Proof of Theorem \ref{thm:scaling}]
	As in the proof of Theorem \ref{thm:mainthm} we may assume the existence of NQC weak Zariski decompositions for NQC log canonical g-pairs of dimensions at most $n-1$. Then the result follows by Theorem \ref{thm:HH_gMM}. 
\end{proof}

Similarly, by replacing Theorem \ref{thm:HH_gMM} by Theorem \ref{thm:HanLi} in the proof above we obtain:

\begin{thm}\label{thm:scalingGpairs}
	Assume the existence of minimal models for smooth varieties of dimension $n-1$.
	
	Let $ (X/Z,\Delta+M) $ be an NQC log canonical g-pair of dimension $ n $ which has an NQC weak Zariski decomposition over $Z$. 
\begin{enumerate}
\item[(i)] Then $ (X,\Delta+M) $ has a minimal model in the sense of Birkar-Shokurov over $Z$.
\item[(ii)] If additionally $X$ is $ \Q $-factorial and $(X,0)$ is klt, then any $(K_X + \Delta +M)$-MMP with scaling of an ample divisor over $Z$ terminates.
\end{enumerate}	
\end{thm}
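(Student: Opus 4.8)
The plan is to copy the proof of Theorem~\ref{thm:scaling} almost verbatim, replacing the appeal to Theorem~\ref{thm:HH_gMM} (which handles usual pairs) by the appeal to its g-pair counterpart Theorem~\ref{thm:HanLi}. No new geometric input is needed: all the substantive work is already packaged in Theorems~\ref{thm:HanLi} and~\ref{thm:HMref} and in Lemma~\ref{lem:downthedimension}, and the only thing to verify carefully is that the dimension bookkeeping lines up.

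Concretely, first I would unwind the hypothesis. The existence of minimal models for smooth varieties of dimension $n-1$ gives, via Remark~\ref{rem:MMimplWZD}, NQC weak Zariski decompositions for smooth varieties of dimension $n-1$; Lemma~\ref{lem:downthedimension} then propagates this to smooth varieties of all dimensions at most $n-1$. Feeding this into Theorem~\ref{thm:HMref} (in each dimension $k\le n-1$) yields NQC weak Zariski decompositions for all NQC log canonical g-pairs of dimensions at most $n-1$, and in particular for $\Q$-factorial NQC dlt g-pairs of dimensions at most $n-1$, which is exactly the inductive hypothesis demanded by Theorem~\ref{thm:HanLi}.

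Now I would apply Theorem~\ref{thm:HanLi} to the given $n$-dimensional NQC log canonical g-pair $(X/Z,\Delta+M)$, which by assumption carries an NQC weak Zariski decomposition over $Z$: part~(i) of Theorem~\ref{thm:HanLi} produces a minimal model in the sense of Birkar--Shokurov over $Z$, proving~(i); and under the extra hypotheses that $X$ is $\Q$-factorial and $(X,0)$ is klt, part~(ii) of Theorem~\ref{thm:HanLi} gives termination of any $(K_X+\Delta+M)$-MMP with scaling of an ample divisor over $Z$, proving~(ii).

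There is no real obstacle here; the only point requiring a moment's attention is that the ``$n-1$'' appearing in the hypothesis is precisely what Theorem~\ref{thm:HanLi} needs in order to treat a g-pair of dimension $n$, so the induction closes with nothing to lose.
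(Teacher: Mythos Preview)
Your proposal is correct and follows exactly the approach indicated in the paper: the paper's proof is the single sentence ``Similarly, by replacing Theorem~\ref{thm:HH_gMM} by Theorem~\ref{thm:HanLi} in the proof above we obtain [the result],'' and you have faithfully unwound what that sentence means. The dimension bookkeeping via Remark~\ref{rem:MMimplWZD}, Lemma~\ref{lem:downthedimension}, and Theorem~\ref{thm:HMref} is exactly right.
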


\begin{proof}[Proof of Corollary \ref{cor:secondary}]
	By Theorem \ref{thm:HMref} we deduce the existence of NQC weak Zariski decompositions for NQC log canonical g-pairs of dimension $n$. Consequently, the result follows by \cite[Theorem 1]{HM18}; note that the results in op.\ cit.\ are stated for rational divisors, but their proof goes through verbatim for NQC g-pairs.
\end{proof}
	
	\bibliographystyle{amsalpha}
	\bibliography{biblio}

\end{document}